\numberwithin{equation}{section}
\date{\today}
\theoremstyle{plain}
\newtheorem{theorem}{Theorem}[section]
\newtheorem{lemma}[theorem]{Lemma}
\newtheorem{claim}[theorem]{Claim}
\newtheorem{proposition}[theorem]{Proposition}
\newtheorem{conjecture}[theorem]{Conjecture}
\newtheorem{remark}[theorem]{Remark}
\newtheorem{definition}[theorem]{Definition}
\def\moverlay{\mathpalette\mov@rlay}
\def\mov@rlay#1#2{\leavevmode\vtop{%
		\baselineskip\z@skip \lineskiplimit-\maxdimen
		\ialign{\hfil$\m@th#1##$\hfil\cr#2\crcr}}}
\newcommand{\charfusion}[3][\mathord]{
	#1{\ifx#1\mathop\vphantom{#2}\fi
		\mathpalette\mov@rlay{#2\cr#3}
	}
	\ifx#1\mathop\expandafter\displaylimits\fi}
\renewenvironment{proof}[1][\proofname]
{\par\pushQED{\qed}
	\normalfont\topsep6\p@\@plus6\p@\relax\trivlist
	\item[\hskip\labelsep\bfseries#1\@addpunct{.}]
	\ignorespaces}
{\popQED\endtrivlist\@endpefalse}
\newcommand{\N}{\mathcal N}
\newcommand{\J}{\mathcal J}
\renewcommand{\P}{\mathcal P}
\definecolor{RED}{rgb}{1,0,0}\definecolor{BLUE}{rgb}{0,0,1} 
\newcommand{\ex}{\textup{ex}}
\newcommand{\exbar}{\overline{\text{ex}}}
\newcommand{\poly}{\text{poly}}
\newcommand{\rem}{\mathrm{rem}}
\begin{document}

\title{Hardness of Hypergraph Edge Modification Problems}

\author{Lior Gishboliner\thanks{Department of Mathematics, University of Toronto. Email: lior.gishboliner@utoronto.ca.} \and Yevgeny Levanzov\thanks{School of Mathematics, Tel Aviv University, Tel Aviv 69978, Israel. Email: yevgenyl@mail.tau.ac.il.} \and Asaf Shapira\thanks{School of Mathematics, Tel Aviv University, Tel Aviv 69978, Israel. Email: asafico@tau.ac.il. Supported in part
by ERC Consolidator Grant 863438 and NSF-BSF Grant 20196.}}

\date{}
\maketitle

\begin{abstract}
For a fixed graph $F$, let $\ex_F(G)$ denote the size of the largest $F$-free subgraph of $G$.
Computing or estimating $\ex_F(G)$ for various pairs $F,G$ is one of the central problems in extremal combinatorics.
It is thus natural to ask how hard is it to compute this function.
Motivated by an old problem of Yannakakis from the 80's, Alon, Shapira and Sudakov [ASS'09] proved that for every non-bipartite
graph $F$, computing $\ex_F(G)$ is NP-hard.
Addressing a conjecture of Ailon and Alon (2007), we prove a hypergraph analogue of this theorem,
showing that for every $k \geq 3$ and every non-$k$-partite $k$-graph $F$, computing $\ex_F(G)$ is NP-hard.
Furthermore, we conjecture that our hardness result can be extended to all $k$-graphs $F$ other than a matching of fixed size.
If true, this would give a precise characterization of the $k$-graphs $F$ for which computing $\ex_F(G)$ is NP-hard,
since we also prove that when $F$ is a matching of fixed size, then $\ex_F(G)$ is computable in polynomial time.
This last result can be considered an algorithmic version of the celebrated Erd\H{o}s-Ko-Rado \nolinebreak Theorem.

The proof of [ASS'09] relied on a variety of tools from extremal graph theory, one of them
being Tur\'an's theorem. One of the main challenges we have to overcome in order to prove our hypergraph extension is the lack of a Tur\'an-type theorem for $k$-graphs. To circumvent this, we develop a completely new graph theoretic approach for proving such hardness results.


\end{abstract}

\section{Introduction}
Graph modification problems are concerned with the algorithmic problem of computing the minimal number of operations (vertex deletions, edge deletions, edge additions, etc.) needed to turn an input graph into a graph having a certain given property. The study of such problems has a long and rich history, going back to the work of Yannakakis \cite{Yannakakis0,Yannakakis1}
in the late 70's, and receiving significant attention since then, as evidenced by a recent survey \cite{CDFG} referencing hundreds of papers. See also \cite{AH,BBD,Cai,G,LY,Mancini,MS,NSS} for some notable results.

Here we focus on edge-modification problems for graphs and hypergraphs. Let us introduce some notation. A {\em $k$-uniform hypergraph} ({\em $k$-graph} for short) consists of a set of vertices $V$ and a set of edges $E$, such that every $e \in E$ is a subset of $V$ of size $k$. Thus, a 2-uniform hypergraph is just a graph.
For a family of $k$-graphs $\mathcal{F}$, a $k$-graph $G$ is {\em $\mathcal{F}$-free} if it contains no copy of any $F \in \mathcal{F}$.  
We use $\ex_{\mathcal{F}}(G)$ to denote the largest number of edges in an $\mathcal{F}$-free subgraph of $G$. Also, let $\rem_{\mathcal{F}}(G)$ denote the minimum number of edges whose deletion turns $G$ into an $\mathcal{F}$-free graph. Clearly, we have
\begin{equation}\label{eqremex}
\rem_{\mathcal{F}}(G) = e(G)-\ex_{\mathcal{F}}(G)\;.
\end{equation}
When $\mathcal{F} = \{F\}$, we write $\ex_{F}(G)$ and $\rem_F(G)$. 

The $\mathcal{F}$-freeness edge-modification problem is the algorithmic problem of computing $\rem_{\mathcal{F}}(G)$ for an input $k$-graph $G$. Equivalently (via \eqref{eqremex}), it is the algorithmic problem of computing $\ex_{\mathcal{F}}(G)$.
Note that this can be thought of as an algorithmic version of the Tur\'an problem, which is the central problem of extremal graph theory. Indeed, the Tur\'an problem asks to determine $\ex(n,\mathcal{F}) := \ex_{\mathcal{F}}(K_n)$, where $K_n$ is the complete graph on $n$ vertices.

Yannakakis asked in the early 80's \cite{Yannakakis0,Yannakakis1} if it is possible to prove a general NP-hardness result, showing
that computing $\rem_{\mathcal{F}}(\cdot)$ is NP-hard for a large collection of graph-families $\mathcal{F}$. A seminal result in the area of edge-modification problems is the theorem of Alon, Shapira and Sudakov \cite{Alon_Shapira_Sudakov}, which resolved Yannakakis's problem by proving the following:


\begin{theorem}[\cite{Alon_Shapira_Sudakov}]\label{thm:Alon_Shapira_Sudakov}
	For every graph-family $\mathcal{F}$ consisting of non-bipartite graphs, it is NP-hard to compute $\rem_{\mathcal{F}}(\cdot)$. Moreover, for every fixed $\delta > 0$, it is NP-hard to approximate $\rem_{\mathcal{F}}(G)$ to within an additive error of $N^{2-\delta}$ on input graphs $G$ with $N$ vertices.
\end{theorem}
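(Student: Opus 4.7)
The plan is to reduce from the MAX-CUT problem (or more generally MAX-$(r-1)$-CUT), which is known to be NP-hard to approximate within certain constant factors. Consider first the case of a single non-bipartite $F$ with chromatic number $r = \chi(F) \geq 3$; the extension to a family $\mathcal{F}$ is obtained with $r = \min_{F\in\mathcal{F}}\chi(F)$. The key combinatorial fact to exploit is that any complete $(r-1)$-partite graph is $F$-free, so by Tur\'an's theorem $T_{r-1}(N)$ is asymptotically the extremal $F$-free graph on $N$ vertices, with $\ex(N,F) = (1 - 1/(r-1))\binom{N}{2}(1+o(1))$. This Tur\'an structure is what the reduction will calibrate against.

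Given an input instance $G$ of MAX-CUT on $n$ vertices, I would construct a graph $H$ on $N = \Theta(nt)$ vertices by starting from the Tur\'an graph $T_{r-1}(N)$ and attaching a small gadget for each edge of $G$. For each vertex $v \in V(G)$ and each color class $V_i$, reserve a blob $B_v^{(i)} \subseteq V_i$ of size $t$. For each edge $uv \in E(G)$, delete a carefully chosen chunk of the complete bipartite graph between $B_u^{(i)}$ and $B_v^{(j)}$ for some $i\neq j$, and in return insert a clique-like structure across the remaining blobs of $u$ and $v$. The gadgets are designed so that resolving them in an $F$-free way forces a coherent global choice: each $v\in V(G)$ must be assigned to a single color class, and the cost of this assignment equals $t^2$ times the number of edges of $G$ that are \emph{not} cut. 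Thus the targeted identity is $\rem_F(H) = t^2(e(G)-\mathrm{MAXCUT}(G)) + o(t^2 n^2)$.

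The crucial and most delicate step is the matching lower bound: every $F$-free subgraph of $H$ of near-maximum size must actually arise from a genuine cut of $G$. To establish this I would invoke the Erd\H{o}s--Simonovits stability theorem, which says that an $F$-free graph with at least $\ex(N,F)-\eta N^2$ edges is obtained from $T_{r-1}(N)$ by altering at most $\eta' N^2$ edges. Applied to an optimal $F$-free subgraph of $H$, stability forces all but a tiny fraction of the blobs $B_v^{(i)}$ to lie almost entirely in a single color class of some $(r-1)$-partition, which in turn defines a cut of $G$ whose MAX-CUT value is close to optimal. The main obstacle is precisely this step: ruling out ``cheating'' $F$-free subgraphs which gain edges locally by violating the intended partition on a small scale. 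Overcoming it requires a quantitative stability statement and careful bookkeeping of the error terms introduced by the gadgets, and one has to ensure the stability bound dominates the inevitable noise $o(t^2 n^2)$ in the construction.

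To upgrade from mere NP-hardness to inapproximability within additive $N^{2-\delta}$, I would take the blow-up parameter $t = n^c$ with $c$ a large constant depending on $\delta$. Then $N = \Theta(n^{1+c})$, while the MAX-CUT signal in $\rem_F(H)$ has magnitude $\Theta(t^2 n^2) = \Theta(n^{2c+2})$. An additive approximation of $\rem_F(H)$ to within $N^{2-\delta}$ therefore translates to a multiplicative $(1+o(1))$-approximation of MAX-CUT, contradicting the known constant-factor inapproximability. Extending from a single $F$ to an arbitrary non-bipartite family $\mathcal{F}$ uses the same construction with $r = \min_{F\in\mathcal{F}}\chi(F)$, together with a family version of the stability theorem; the rest of the argument is identical. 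It is worth emphasizing that this entire blueprint rests on Tur\'an's theorem and its stability version, which is precisely the tool unavailable for $k$-graphs with $k\geq 3$ and is what forces the authors of the present paper to devise a new graph-theoretic approach.
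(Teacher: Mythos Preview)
Your blueprint points in the right direction---Tur\'an structure plus a reduction from a cut-type problem is indeed the shape of the Alon--Shapira--Sudakov argument---but it has a genuine gap at precisely the step you yourself flag as ``the main obstacle''. Erd\H{o}s--Simonovits stability says only that an $N$-vertex $F$-free graph with $\ex(N,F) - \eta N^2$ edges is within $\eta'(\eta)\,N^2$ of being $(r{-}1)$-partite, with $\eta' \to 0$ as $\eta \to 0$ but no quantitative rate. In your construction the total size of the gadgets and the MAX-CUT signal you wish to extract are of the \emph{same} order (both $\Theta(m t^2)$ with $m = e(G)$), so the deficit $\eta$ and the quantity you need to resolve are comparable; you would therefore need $\eta'(\eta) = o(\eta)$, and classical stability does not give that. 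Concretely, a near-optimal $F$-free subgraph of your $H$ could be $\eta' N^2$-close to an $(r{-}1)$-partition that scrambles a constant fraction of the blobs $B_v^{(i)}$ and hence encodes no cut of $G$ at all; nothing in your outline rules this out. The gadgets themselves are also left entirely unspecified (``delete a carefully chosen chunk'', ``insert a clique-like structure''), so even the upper-bound direction of your claimed identity cannot be checked.

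What \cite{Alon_Shapira_Sudakov} actually proves---the ``highly non-trivial result in extremal graph theory'' described in the paragraph following Theorem~\ref{thm:Alon_Shapira_Sudakov}---is far sharper than stability: for each $F$ with $\chi(F) = r{+}1$ there is $c = c(F) > 0$ such that on $n$-vertex graphs of minimum degree at least $(1-c)n$, the distance to $F$-freeness and the distance to $r$-colourability differ by at most $n^{2-c}$. This polynomial saving, not a merely-$o(1)$ one, is exactly what drives the additive-$N^{2-\delta}$ inapproximability; the reduction is then from the hardness of approximating the distance to $r$-colourability. You have identified the right source problem and the right heuristic picture, but have substituted ordinary stability for this sharp $n^{2-c}$ result, and that substitution is where the argument breaks. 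It is also worth noting that the present paper's own route to the $k=2$ case (as a special case of Theorem~\ref{thm:main}, for finite $\mathcal{F}$) is entirely different: it passes through cores, the partite problem $\rem_2(\cdot,\mathcal{P},L)$, an explicit 3-SAT gadget for the triangle (Lemma~\ref{lem:triangle}), and a blowup; no Tur\'an-type or stability input is used, which is precisely what allows the method to extend to $k \geq 3$.
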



The proof of Theorem \ref{thm:Alon_Shapira_Sudakov} is quite involved. It relies, among other things, on a highly non-trivial result in extremal graph theory, also proved in \cite{Alon_Shapira_Sudakov}. This result states that for every $r \geq 2$ and for every graph $F$ of chromatic number $r+1$, there is $c = c(F) > 0$ such that if $G$ is an $n$-vertex graph with minimum degree at least $(1-c)n$,
then the distance of $G$ to being $F$-free is close to the distance of $G$ to being $r$-colorable\footnote{More precisely, these two distances differ by at most $n^{2-c}$.}. We note that this statement is stronger than Tur\'an's theorem, which determines $\ex_{K_r}(K_n)$.
This result is then combined with the fact that estimating the distance to $r$-colorability is NP-hard (as well as with several additional ingredients) to establish Theorem \ref{thm:Alon_Shapira_Sudakov}.

Ailon and Alon \cite{AA} conjectured that Theorem \ref{thm:Alon_Shapira_Sudakov} can be extended to $k$-uniform hypergraphs. Our main result is a proof of this conjecture for the case of finite $k$-graph families $\mathcal{F}$. To state this result, let us recall the following definition: A $k$-graph $F$ is {\em $k$-partite} if there is a partition $V(F) = V_1 \cup \dots \cup V_k$ such that every edge of $F$ intersects each of the parts $V_1,\dots,V_k$. This is the natural extension of the notion of bipartiteness to $k$-graphs. Now, our main result is as follows:


\begin{theorem}\label{thm:main}
	Let $\mathcal{F}$ be a finite family of non-$k$-partite $k$-graphs. For every $\delta > 0$, it is NP-hard to approximate $\rem_{\mathcal{F}}(G)$ up to an additive error of $N^{k-\delta}$ for $k$-uniform inputs $G$ with $N$ vertices.
\end{theorem}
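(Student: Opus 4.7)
The natural high-level strategy, loosely modeled on \cite{Alon_Shapira_Sudakov} but reworked to sidestep the missing Tur\'an-type input, is a gap-producing polynomial-time reduction from an NP-hard source problem. A first candidate source is the problem of deciding whether an input $k$-graph $H_0$ is $k$-partite, which is NP-hard for every $k\ge 3$, or equivalently the problem of computing the minimum number of edges that must be deleted from $H_0$ to make it $k$-partite.

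The reduction I would try first transforms an $n$-vertex instance $H_0$ into a $k$-graph $G$ on $N=nt$ vertices, where $t$ is polynomially large in $n$. The base construction is the balanced $t$-blowup $G=H_0[t]$, possibly enriched by a small number of auxiliary ``gadget'' edges. The upper bound on $\rem_{\mathcal{F}}(G)$ comes almost for free from the fact that every $F\in\mathcal{F}$ is non-$k$-partite: if $H_0$ is $k$-partite with parts $V_1,\dots,V_k$, then $H_0[t]$ inherits the partition $V_1[t],\dots,V_k[t]$; any homomorphism $F\to H_0[t]$ into a $k$-partite host pulls back to a $k$-partition of $F$, contradicting $F$ being non-$k$-partite; hence $H_0[t]$ contains no member of $\mathcal{F}$ and $\rem_{\mathcal{F}}(H_0[t])=0$. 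More generally, if $H_0$ can be made $k$-partite by deleting $m$ edges, then $H_0[t]$ can be made $\mathcal{F}$-free by deleting at most $mt^k$ blown-up edges.

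The difficult direction, which I expect to be the main obstacle, is the matching lower bound: if $H_0$ is not $k$-partite, then $\rem_{\mathcal{F}}(G)\gtrsim t^k$. For graphs ($k=2$), \cite{Alon_Shapira_Sudakov} established the analogous bound using a strong Tur\'an-Simonovits stability theorem, asserting that near-$F$-free graphs are near-$(\chi(F)-1)$-partite. No such stability result is known for $k\ge 3$, and in view of the notoriously open state of the hypergraph Tur\'an problem, no such route is plausibly available here. My plan is to replace the stability-based argument with a local, combinatorial one: exhibit, inside any $H_0$ that is not $k$-partite, a configuration that lifts to $\Omega(t^k)$ pairwise edge-disjoint copies of some fixed $F\in\mathcal{F}$ inside $G$, each of which demands its own edge deletion. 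The clean implication ``$H_0$ not $k$-partite $\Longrightarrow$ there is a homomorphic copy of some $F\in\mathcal{F}$ inside $H_0$ (possibly after augmenting $H_0$ with a fixed gadget)'' is where the paper's promised new graph-theoretic approach would have to enter, and is the step I expect to be the most delicate; a careful choice of gadgets, perhaps one per $F\in\mathcal{F}$, should be able to force this locally without appealing to any extremal/stability statement.

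Finally, the amplification is routine: choosing $t=n^{C(\delta)}$ converts a constant yes/no gap in the source problem into an additive gap of $N^{k-\delta}$ between $\rem_{\mathcal{F}}$ in the two cases, yielding the claimed inapproximability on instances with $N=nt$ vertices.
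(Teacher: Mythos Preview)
Your plan has a genuine gap at exactly the step you flag as delicate, and the gap is not a matter of finding the right gadget: the implication ``$H_0$ not $k$-partite $\Longrightarrow$ some $F\in\mathcal{F}$ maps homomorphically into $H_0$'' is simply false in general, so no blowup of $H_0$ can be forced to contain copies of $F$. Take $k=3$, $\mathcal{F}=\{K_5^{(3)}\}$, and $H_0=K_4^{(3)}$. Then $H_0$ is not $3$-partite, yet the $t$-blowup $H_0[t]$ is $4$-partite (it is $K_4^{(3)}$-partite), and any five vertices of $H_0[t]$ include two from the same blowup class, which lie in no common edge; hence $H_0[t]$ is $K_5^{(3)}$-free and $\rem_{\mathcal{F}}(H_0[t])=0$, not $\Omega(t^3)$. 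Adding a fixed gadget cannot rescue this: any copies of $K_5^{(3)}$ you create this way all pass through the gadget and can be destroyed by deleting $O(1)$ gadget edges. The upshot is that $k$-partiteness is the wrong source problem; the distance of $H_0$ to $k$-partiteness is decoupled from $\rem_{\mathcal{F}}(H_0[t])$ when $\mathcal{F}$ is not ``universal'' among non-$k$-partite obstructions.

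The paper avoids this by choosing a source problem tailored to $\mathcal{F}$: it passes to the core $L$ of a homomorphism-minimal $F\in\mathcal{F}$ and reduces from the \emph{exact} computation of $\rem_L(G)$ on $L$-partite inputs $G$. This choice guarantees that the blowup $G'$ of $G$ is itself $L$-partite, so every canonical copy of $L$ in $G$ seeds many copies of $F$ in $G'$ (via supersaturation, Lemma~\ref{corehomomor}), and conversely any large $\mathcal{F}$-free subgraph of $G'$ yields, by random restriction, a large $L$-free subgraph of $G$; this gives the tight two-sided estimate \eqref{eq:approx}. The real work is then to show that the tailored source problem $\rem_k(G,\mathcal{P},L)$ is NP-hard for every non-$k$-partite core $L$ (Lemma~\ref{lem:core hardness}), which is done by a structural dichotomy on the $2$-shadow $\partial_2(L)$: either it contains an induced cycle $C_\ell$ with $\ell\ge 4$, or it is chordal and hence (being perfect with $\chi\ge k+1$) contains a clique that projects to a $K_{s+1}^{(s)}$-structure; in either case Lemma~\ref{lem:main} reduces from the corresponding base problem, whose hardness is established in Lemmas~\ref{lem:simplex} and~\ref{lem:cycle}. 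Your amplification/blowup idea does resurface in the paper's final step, but only after the source problem has been replaced by one that actually lives inside $L$-partite hosts.
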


Let us explain the main challenge we had to overcome in order to prove Theorem \ref{thm:main}. A natural attempt to extend Theorem \ref{thm:Alon_Shapira_Sudakov} to $k$-graphs would be to first extend the extremal graph-theoretic results we mentioned in the paragraph following the statement of Theorem \ref{thm:Alon_Shapira_Sudakov}. However, any result of this sort is far out of reach of the current state of knowledge on extremal numbers of hypergraphs. Indeed, note that the aforementioned result from \cite{Alon_Shapira_Sudakov} (when applied to $G = K_n$) shows that the $n$-vertex $F$-free graphs with the maximum number of edges are close to being $r$-colorable, recovering the classical Erd\H{o}s-Stone theorem \cite{ErdosStone}. However, for $k$-graphs with $k \geq 3$, the structure of the extremal $F$-free hypergraphs is not known even for very simple hypergraphs, such as $K^{(3)}_4$ ($K^{(k)}_t$ denotes the complete $k$-graph on $t$ vertices). In fact, even the asymptotic value of 
$\ex(n,K^{(3)}_4)$
is not known; see \cite{Keevash_survey} for an overview of the current state of knowledge in this area. Thus, to prove Theorem \ref{thm:main}, one has to take a completely different approach which avoids the determination
of quantities such as $\ex(n,K^{(3)}_t)$. In particular, the approach of \cite{Alon_Shapira_Sudakov}, which criticially relies on ``knowing'' these quantities, cannot be used. In this paper we indeed develop such an approach, which turns out to be much simpler
than the one used in \cite{Alon_Shapira_Sudakov} for the proof of Theorem \ref{thm:Alon_Shapira_Sudakov}. In particular, we obtain
a much simpler and shorter proof of Theorem \ref{thm:Alon_Shapira_Sudakov} for the case of finite families $\mathcal{F}$.

Given Theorem \ref{thm:Alon_Shapira_Sudakov}, it is natural to look for a characterization of the graphs $F$ for which
computing $\rem_F(G)$ is NP-hard. And indeed, one of the open problems raised in \cite{Alon_Shapira_Sudakov}
is to determine for which bipartite $F$, computing $\rem_F(G)$ is NP-hard. They further noted that computing
this function is polynomial-time solvable when $F$ is a star.
In \cite{GLS_trees}, the authors conjectured that computing $\rem_F(G)$ is polynomial-time solvable if $F$ is a star forest and is NP-hard otherwise. They proved this conjecture in the case that $F$ is a forest. Here we investigate the analogous problem for hypergraphs, making the following conjecture, stating that for $k \geq 3$, the class of $F$'s for which
computing $\rem_F(G)$ is polynomially solvable is much more restrictive than in the case $k=2$. Recall that a hypergraph is a {\em matching} if it consists of pairwise vertex-disjoint edges.

\begin{conjecture}\label{conjecture}
For every $k\geq 3$ and every $k$-graph $F$, computing $\rem_F(G)$ is polynomial-time solvable if $F$ is a matching and NP-hard otherwise.
\end{conjecture}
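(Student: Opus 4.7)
The conjecture splits into two directions. For the easy direction, suppose $F$ is a matching of $t$ pairwise disjoint $k$-edges. The plan is to exploit the bound $\tau(H) \le k \cdot \nu(H)$ for hypergraph covers versus matchings: any $F$-free subhypergraph $H \subseteq G$ has a maximum matching $M^*$ of size at most $t-1$, and by maximality every edge of $H$ meets $V(M^*)$, so $V(M^*)$ is a vertex cover of $H$ of size at most $k(t-1)$. I would enumerate all candidate pairs $(S, M)$ with $S \subseteq V(G)$, $|S| \le k(t-1)$, and $M$ a matching in $G$ with $V(M) = S$; this is polynomial for fixed $k$ and $t$. For each such $(S, M)$, compute the maximum $H \subseteq E(G)$ subject to $M \subseteq H$, every edge of $H$ meeting $S$, and $H$ containing no matching of size $|M|+1$. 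The final output is the maximum of $|H|$ over all pairs. The nontrivial step is the last restricted optimization; I expect it to yield to an LP-duality or matroid-intersection argument that exploits the bounded vertex cover provided by $S$.

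For NP-hardness when $F$ is not a matching, I would split into two sub-cases. The non-$k$-partite case is handled by Theorem \ref{thm:main} directly. The remaining case — $F$ is $k$-partite yet contains two edges sharing at least one vertex — is the real content. As warm-up base cases, take $F_0$ to be either two $k$-edges sharing exactly one vertex or two $k$-edges sharing exactly two vertices; for each such $F_0$ one can likely argue NP-hardness by a direct reduction, e.g.\ from $k$-dimensional matching or from maximum triangle packing (via the representation of $3$-uniform linear hypergraphs as triangle-decompositions in an auxiliary graph). The plan for a general $k$-partite non-matching $F$ is then a gadget reduction: start from an instance of an appropriate NP-hard source, replace each source-element by a carefully designed $k$-partite gadget, and ensure that copies of $F$ in the resulting hypergraph are in bijection with prescribed local configurations in the source, so that $\rem_F$ of the constructed instance encodes the source solution up to a computable offset.

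The main obstacle I anticipate is this $k$-partite non-matching case. In contrast with Theorem \ref{thm:main}, where the extremal density is $\Theta(N^k)$ and one has ample room for additive error, here $\ex(n, F) = o(n^k)$ by the hypergraph K\H{o}v\'ari–S\'os–Tur\'an bound for $k$-partite $F$, so the $N^{k-\delta}$ slack of Theorem \ref{thm:main} is too generous and a genuinely exact reduction is required. A second difficulty is suppressing spurious copies of $F$ that inevitably appear between different gadgets, which is particularly delicate when $F$ is close to a blow-up of a single edge and therefore has a rich automorphism group, forcing the gadget construction to be rigid and $F$-specific. Working within a sparse regime while still encoding NP-hardness, and simultaneously preventing unintended $F$-embeddings, is where I expect the bulk of the technical effort to concentrate.
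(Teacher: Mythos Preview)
First, note that the statement is a \emph{conjecture} in the paper; the paper does not prove it in full. What the paper establishes is the positive direction (Theorem~\ref{thm:matching}) and the very special negative case of two intersecting edges (Proposition~\ref{prop1}). The general $k$-partite non-matching case is left open, so your proposal is attempting more than the paper claims.

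For the positive direction, your outline contains a genuine gap. After fixing the vertex set $S$ of a maximum matching of the unknown optimal $H$, you still need to maximise $|H|$ subject to $\nu(H)\le |M|$. The condition that every edge of $H$ meets $S$ only yields $\nu(H)\le |S|=k(t-1)$, not $\nu(H)\le t-1$; enforcing the latter is a global constraint with no evident matroid or LP structure, and your appeal to ``LP-duality or matroid-intersection'' is a hope rather than an argument. Already for $t=2$ the subproblem is to find a maximum intersecting subfamily among the edges through a fixed $k$-set, and there is no obvious reason this is tractable by those tools. The paper's proof of Theorem~\ref{thm:matching} proceeds quite differently: rather than optimise, it \emph{enumerates}. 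Lemma~\ref{lem:main1} builds, by induction on a parameter $i$, a polynomial-size family $\mathcal{H}_i$ of candidate subgraphs guaranteed to contain every maximal $M_r$-free subgraph of $G$; one then simply selects the largest $M_r$-free member of $\mathcal{H}_k$. The inductive step hinges on classifying small subsets $S$ of a designated set $A_H$ as $t$-heavy or $t$-light in $F$: if $S$ is heavy, Lemma~\ref{lem:heavy set} forces every $G$-edge through $S$ into any maximal $F$, so these edges can be added wholesale; if $S$ is light, only boundedly many additional vertices need to be tracked. This sidesteps any optimisation subroutine entirely.

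For the negative direction, your decomposition is sound: the non-$k$-partite case is Theorem~\ref{thm:main}, and the $k$-partite non-matching case is the crux. But the paper does not resolve that case either; Proposition~\ref{prop1} handles only the minimal configurations $F_t$, and does so by a reduction from \textsc{3-OCC-MAX-2-SAT} rather than from a matching or packing problem. The obstacles you anticipate---the need for an exact reduction in a sparse regime, and the control of spurious $F$-copies between gadgets---are precisely why the general $k$-partite case remains a conjecture.
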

\noindent
As our second result, we prove the positive part of the conjecture.


\begin{theorem}\label{thm:matching}
For every $k \geq 2$ and every $k$-uniform matching $M$, $\rem_{M}(\cdot)$ can be computed in polynomial time.
\end{theorem}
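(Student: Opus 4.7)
The plan is to compute $\ex_M(G)$ directly by polynomial enumeration, from which $\rem_M(G) = e(G) - \ex_M(G)$ follows by~\eqref{eqremex}. Let $s$ denote the number of edges of $M$, so that $\ex_M(G)$ is the maximum of $|E(G')|$ over sub-$k$-graphs $G'\subseteq G$ with $\nu(G')\leq s-1$. The structural starting point is a dichotomy for any $k$-graph $H$ with $\nu(H)\leq s-1$: either $|V(H)|\leq ks-1$ (in which case $\nu(H)\leq\lfloor|V(H)|/k\rfloor\leq s-1$ holds automatically), or $H$ has a vertex cover of size at most $k(s-1)$, namely $V(M^*)$ for any maximum matching $M^*$ of $H$. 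This dichotomy suggests iterating over two families of candidate subgraphs for $G'$.

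The algorithm I propose is as follows. For each $U\subseteq V(G)$ with $|U|\leq ks-1$, record $e(G[U])$ as a candidate value (always valid, since then $\nu(G[U])\leq s-1$). For each $W\subseteq V(G)$ with $|W|\leq k(s-1)$, let $E_W^G:=\{e\in E(G):e\cap W\neq\emptyset\}$, decide whether $\nu(E_W^G)\leq s-1$ (which is polynomial for fixed $s$, since it suffices to detect an $s$-matching by enumerating $s$-tuples of edges), and if so record $|E_W^G|$ as a candidate. Output the largest candidate. For fixed $k$ and $s$ there are $O(n^{ks-1})$ candidates, each evaluable in polynomial time, so the overall procedure runs in polynomial time in $n$.

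The main task is to prove correctness: that every optimal $G'$ has $|E(G')|$ bounded above by some recorded candidate. In the ``small'' case $|V(G')|\leq ks-1$, the choice $U=V(G')$ suffices. In the ``large'' case $|V(G')|\geq ks$, one would like to use $W=V(M^*)$ for a maximum matching $M^*$ of $G'$; then indeed $G'\subseteq E_W^G$ and $|W|\leq k(s-1)$, but the crucial condition $\nu(E_W^G)\leq s-1$ can fail because $G$ may contain ``external'' edges hitting $W$ that, together with other edges of $G$, form $s$-matchings inside $E_W^G$. This is the main obstacle. To address it, I would enrich the enumeration by additionally iterating over a bounded ``exclusion set'' $X\subseteq V(G)$ with $|X|\leq C(k,s)$ for some constant $C(k,s)$ depending only on $k$ and $s$, recording candidates of the form $|\{e\in E_W^G:e\cap X=\emptyset\}|$ whenever the corresponding subhypergraph has matching number at most $s-1$. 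An exchange argument, leveraging the edge-maximality of an optimal $G'$ (any edge in $E(G)\setminus E(G')$ that is added must complete an $s$-matching), should then show that some such enriched candidate captures every optimum. Identifying the correct constant $C(k,s)$ and carrying out this exchange argument cleanly will be the technical heart of the proof.
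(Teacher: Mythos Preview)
Your proposal has a genuine gap: the exclusion-set enrichment is left unproven, and in fact it cannot work as stated. Take $k=3$, $s=2$, let $V(G)=\{1,2,3\}\cup A\cup B\cup C$ with $A,B,C$ pairwise disjoint of size $m\geq 6$, and let $G'$ consist of all triples $\{1,2,a\}$, $\{1,3,b\}$, $\{2,3,c\}$ for $a\in A,\ b\in B,\ c\in C$; then $G'$ is intersecting with $3m$ edges. Set $G=G'\cup\{\{1,a_1,a_2\},\{2,b_1,b_2\},\{3,c_1,c_2\}\}$. One checks that $G'$ is the \emph{unique} maximum intersecting subfamily of $G$ (any family containing one of the three extra triples is a sub-star of size $\leq 2m+1$). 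Now any $W$ with $|W|\leq k(s-1)=3$ covering $G'$ must contain at least two of $1,2,3$, whence $E_W^G$ contains at least two of the three extra triples, which are pairwise disjoint; so $\nu(E_W^G)\geq 2$. To repair this with an exclusion set $X$ you would need $X$ to hit each such extra triple while missing every $G'$-edge---but every vertex of $G$ lies in some $G'$-edge, forcing $X=\emptyset$. Hence no candidate $\{e\in E_W^G:e\cap X=\emptyset\}$ equals $G'$ for any $W,X$ whatsoever, the ``small'' candidates $G[U]$ with $|U|\leq 5$ have at most $10<3m$ edges, and your scheme outputs a value strictly below $\ex_M(G)=3m$.

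The idea you are missing, and which the paper supplies, is a \emph{heavy/light} dichotomy for subsets $S$ of a bounded, iteratively growing attention set $A$: call $S$ heavy in $F$ if $\nu(L_F(S))>(s-1)k$. A short lemma shows that if $F$ is maximal $M_s$-free and $S$ is heavy, then $F$ contains \emph{every} $G$-edge through $S$; if $S$ is light, a maximal matching in $L_F(S)$ has bounded size, and its vertices are added to $A$. Iterating over $i=0,\dots,k$ and enumerating all heavy/light patterns on $\binom{A}{i}$ together with bounded link-matchings for the light sets yields a polynomial family containing every maximal $M_s$-free subgraph. In the example above the $2$-subsets $\{1,2\},\{1,3\},\{2,3\}$ are heavy (their links have matching number $m>(s-1)k=3$), and taking all $G$-edges through them reconstructs $G'$ exactly. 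The point is that membership of an edge $e$ in a maximal $F$ depends on \emph{which} subset $e\cap A$ is (heavy or light), not merely on whether $e$ meets or avoids a fixed vertex set; your $W,X$ template cannot encode this.
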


Recall that the celebrated Erd\H{o}s-Ko-Rado theorem \cite{EKR} states that for every $n \geq 2k$, an
$n$-vertex $k$-graph in which every pair of edges intersect can have at most ${n-1 \choose k-1}$ edges. Observe that this is equivalent
to the statement that for the $k$-graph $M_2$ consisting of $2$ disjoint edges (i.e. the matching of size 2), we have $\ex_{M_2}(K^{(k)}_n)={n-1 \choose k-1}$. Hence, Theorem \ref{thm:matching}, which gives an efficient algorithm for computing $\ex_{M_2}(G)$ for every $G$, can be considered an algorithmic version of the Erd\H{o}s-Ko-Rado \nolinebreak theorem.

For $k\geq 3$ and $1 \leq t \leq k-1$ let $F_t$ denote the $k$-graph consisting of $2$ edges sharing $t$ vertices.
Note that an equivalent way to state Conjecture \ref{conjecture} is that if $F$ contains a copy of $F_t$ for some $1 \leq t \leq k-1$, then
computing $\rem_{F_t}(G)$ is NP-hard. As a special case of Conjecture \ref{conjecture}, we also show that for these
``minimally'' hard $k$-graphs, computing $\rem_{F}(G)$ is indeed NP-hard.

\begin{proposition}\label{prop1}
For every $F_t$, computing $\rem_{F_t}(G)$ is NP-hard.
\end{proposition}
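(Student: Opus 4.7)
The plan is to establish NP-hardness by two direct polynomial-time reductions, one handling the range $t \ge 2$ and one handling the remaining case $t = 1$.

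For $t \ge 2$, I would reduce from the \emph{maximum edge-disjoint triangle packing} problem in graphs, which is NP-hard by Holyer's theorem. Given an input graph $H$, construct a $k$-graph $G$ as follows: take a set $X$ of exactly $t - 2$ ``common'' fresh vertices, and for each triangle $T$ of $H$ a private set $Y_T$ of exactly $k - t - 1$ fresh vertices, with the $Y_T$'s pairwise disjoint and disjoint from $V(H) \cup X$. The edges of $G$ are $e_T := V(T) \cup X \cup Y_T$ for each triangle $T$ of $H$; each has size $3 + (t - 2) + (k - t - 1) = k$. For any two distinct triangles $T_1, T_2$ of $H$ one computes
\[
|e_{T_1} \cap e_{T_2}| \;=\; |V(T_1) \cap V(T_2)| + (t - 2) \;\in\; \{t - 2,\; t - 1,\; t\},
\]
and this equals $t$ exactly when $T_1$ and $T_2$ share an edge of $H$. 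Hence the maximum $F_t$-free sub-hypergraph of $G$ corresponds bijectively to a maximum edge-disjoint triangle packing of $H$, completing the reduction.

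For $t = 1$, the padding above breaks down (one would need $t - 2 < 0$ common vertices), so I would instead reduce from the problem of maximum matching in a \emph{linear} $k$-uniform hypergraph (\emph{linear} meaning no two edges share more than one vertex); this problem is NP-hard for every $k \ge 3$. Given such a linear $k$-graph $H$, simply set $G := H$. Because $H$ is linear, any two of its edges share at most one vertex, so ``no two edges share exactly one vertex'' ($F_1$-freeness) coincides with ``no two edges share any vertex'' (being a matching). Therefore the maximum $F_1$-free sub-hypergraph of $G$ is precisely the maximum matching of $H$.

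The main subtlety lies in the $t = 1$ case, which relies on NP-hardness of maximum matching in linear $k$-uniform hypergraphs for every $k \ge 3$. For $k = 3$ this is a linear version of 3-dimensional matching, obtainable from Karp's classical reduction with a standard linearization gadget (whenever two triples share two coordinates, one replaces them by triples through fresh vertices). The case $k \ge 4$ then follows by a simple padding argument: given a linear 3-uniform instance, append $k - 3$ fresh private vertices to each edge, which preserves linearity and preserves the maximum matching. With this base hardness in hand, both reductions are elementary, and the two cases together cover every $(k, t)$ with $k \ge 3$ and $1 \le t \le k - 1$, giving Proposition~\ref{prop1}.
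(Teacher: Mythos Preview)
Your approach is genuinely different from the paper's. The paper first proves the two base cases $(k,t)=(3,2)$ and $(k,t)=(3,1)$ by a direct gadget reduction from 3-OCC-MAX-2-SAT, and then bootstraps to arbitrary $(k,t)$ via two one-line reductions (adding a private new vertex to every edge increases $k$ while keeping $t$; adding one common new vertex increases both). Your $t\ge 2$ argument, reducing from edge-disjoint triangle packing via $e_T=V(T)\cup X\cup Y_T$, is correct and arguably slicker than the paper's MAX-2-SAT construction; the computation $|e_{T_1}\cap e_{T_2}|=|V(T_1)\cap V(T_2)|+(t-2)\in\{t-2,t-1,t\}$ is exactly right, and Holyer's theorem gives the needed hardness.

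The $t=1$ case, however, has a real gap. You assert NP-hardness of maximum matching in \emph{linear} $3$-uniform hypergraphs and justify it by ``Karp's classical reduction with a standard linearization gadget''. But Karp's 3DM reduction is not linear: for every clause $c_j$ the three clause-triples all share the pair $\{s_1[j],s_2[j]\}$, and each garbage-collection pair $\{g_1[k],g_2[k]\}$ sits in many triples. There is no standard one-step gadget that removes these two-element overlaps while preserving the matching number; any such gadget would itself have to be described and analysed, and that is not routine. So as written the $t=1$ case is unsupported.

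There is, fortunately, a clean fix in the spirit of your own $t\ge 2$ argument. Again reduce from edge-disjoint triangle packing: introduce one new vertex $w_e$ for each edge $e$ of $H$, and for each triangle $T=\{a,b,c\}$ set $e_T=\{w_{ab},w_{ac},w_{bc}\}\cup Y_T$ with $|Y_T|=k-3$. Two distinct triangles share either $0$ or $1$ edges, so $|e_{T_1}\cap e_{T_2}|\in\{0,1\}$, with equality to $1$ exactly when $T_1,T_2$ share an edge of $H$. The resulting $k$-graph is linear, and its maximum $F_1$-free subhypergraph (equivalently, maximum matching) equals the maximum edge-disjoint triangle packing of $H$. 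This replaces your appeal to linear 3DM and, incidentally, establishes the hardness you wanted as a corollary. With this correction, your proof is complete and uniformly based on Holyer's problem, whereas the paper's route goes through MAX-2-SAT and two padding lemmas.
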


\subsection{Overview of proofs}

Let us give some details of the proof of Theorem \ref{thm:main}. 
For simiplicity, let us consider $\rem_F(G)$ for a given $k$-graph $F$. 
Our first observation is that it is more convenient to consider a {\em partite} version of this problem; namely, we only consider input graphs $G$ which are homomorphic to $F$. In other words, the input $G$ has a vertex partition with parts corresponding to the vertices of $F$, and edges of $G$ can only exist between $k$-tuples of parts corresponding to an edge of $F$. This partition will be given as part of the input. 
If $F$ is a {\em core}, meaning that every homomorphism from $F$ to itself is an isomorphism (we will define this precisely in Section \ref{sec:1}), then every copy of $F$ in such a $k$-graph $G$ must respect the vertex partition. This makes the copies of $F$ easier to handle.

To establish Theorem \ref{thm:main} for a given $k$-graph $F$, we consider the core $L$ of $F$, prove hardness for the problem of computing $\rem_L(\cdot)$ (exactly), and then show that computing $\rem_L(\cdot)$ reduces to approximating $F$ up to an error of $N^{k-\delta}$ on $N$-vertex inputs.

Thus, the main task is to prove that $\rem_F(\cdot)$ is hard when $F$ is a core. The key new insight in our proof is that it is now enough to reduce $\rem_L(\cdot)$ to $\rem_F(\cdot)$ for $L$ belonging to two special families of hypergraphs: (i) graph (i.e., 2-uniform) cycles; and (ii) complete hypergraphs. We note that this reduction is in fact for the partite versions of these problems (and this is crucial for the reduction to work). Roughly speaking, we show (as a special case of Lemma \ref{lem:main}) that if the projection of $E(F)$ to a subset of vertices of $F$ forms an induced (2-uniform) cycle of length at least 4 or a complete $(s-1)$-uniform hypergraph on $s$ vertices (for some $3 \leq s \leq k$), then computing $\rem_F(\cdot)$ is at least as hard as computing $\rem_L(\cdot)$ for $L$ being the corresponding cycle or complete $(s-1)$-uniform $s$-vertex hypergraph. We then show (see Lemma \ref{lem:core hardness}) that every non-$k$-partite $k$-graph $F$ admits one of these structures. Finally, we show that $\rem_L(\cdot)$ is hard for cycles and complete $(s-1)$-uniform $s$-vertex hypergraphs (see Lemmas \ref{lem:simplex} and \ref{lem:cycle}).

Let us now comment on the proof of Theorem \ref{thm:matching}.
Let $G$ be a $k$-uniform hypergraph, and let $M_r$ denote the $k$-uniform matching of size $r$.
Let $\mathcal{M}_r(G)$ be the set of all inclusion-wise maximal subgraphs of $G$ with no matching of size $r$; namely, every $F \in \mathcal{M}_r(G)$ is $M_r$-free, and every subgraph $H$ of $G$ which is $M_r$-free is a subgraph of some $F \in \mathcal{M}_r(G)$. It is easy to see that if we can generate $\mathcal{M}_r(G)$ in polynomial time, then we can design a polynomial-time algorithm for computing $\ex_{M_r}(G)$, as required by Theorem \ref{thm:matching}. The main idea behind the proof of Theorem \ref{thm:matching} is to devise an inductive procedure for generating $\mathcal{M}_r(G)$ (actually a superset thereof which is also of polynomial size). As is sometimes the case with such arguments, the main difficulty is coming up with the ``correct'' inductive assumption, which enables one to carry out the induction process.

Let us finally remark about a potential connection between our investigation here and the 
celebrated Dichotomy Theorem of Hell and Ne\v{s}et\v{r}il \cite{HN} (which is a special case of the even more famous Dichotomy Theorem
of Bulatov and Zhuk \cite{Bulatov,Zhuk} on CSPs). The result of \cite{HN} is about the complexity of deciding if an input graph $G$ has a homomorphism into a given fixed graph $H$, stating that this problem is polynomial-time solvable if $H$ is bipartite and NP-hard otherwise. It is easy to see that
this problem is a CSP, in which we need to assign colors to vertices and have a constraint for each edge of $G$.
Note that $\rem_F(\cdot)$ can also be thought of as a CSP, where we need to decide for each edge of the
$k$-graph if we want to remove it or not, and where we have a constraint $C$ for each copy of $F$ in the input $G$, so that in order to
satisfy $C$ we have to remove at least one of the edges of the copy of $F$ for which $C$ was created. As a CSP this problem is trivial
since we can always set all variables to $1$, i.e. we can always turn $G$ into an $F$-free $k$-graph by removing all edges.
The problem is of course to satisfy all constraint $C$, while setting to $1$ as few variables as possible. It is thus interesting to note that although $\rem_F(\cdot)$
is not a CSP per-se, the ideas we use in order to tackle Conjecture \ref{conjecture} are very much in the spirit of those
used in \cite{HN}. In particular, in both cases one has to find the ``minimal'' graphs/$k$-graphs for which the problem
is hard, and in both cases the notion of a {\em core} plays a crucial role. These ideas were also used in \cite{Bulatov,Zhuk}. It would thus
be interesting to further explore this possible connection between these two problems.

\subsection{Paper organization}\label{subsec:overview}
The proof of Theorem \ref{thm:main} has two main steps, one of which is graph-theoretic in nature, and one
which is more complexity-theoretic. We give the first part in Section \ref{sec:1}, and the second part in Section \ref{sec:triangle reduction}.
Theorem \ref{thm:matching} is proved in Section \ref{sec:matching}. As the proof of Proposition \ref{prop1} uses more routine arguments, we give it in the appendix.


\section{Proof of Theorem \ref{thm:main}}\label{sec:1}

We begin by recalling some basic facts regarding $k$-graph homomorphism.
A homomorphism $\phi$ from a $k$-graph $G$ to a $k$-graph $F$ is a mapping $\phi:V(G)\mapsto V(F)$ which maps edges of $G$ to edges of $F$. If such a mapping exists then we say that $G$ is {\em $F$-partite}.
Observe that a $k$-graph $G$ is $k$-partite if and only if $G$ is $F$-partite for $F$ being a single $k$-edge.
Suppose from now on that $F$ is a $k$-graph with vertex-set $[f]=\{1,\ldots,f\}$.
Note that every $F$-partite $k$-graph $G$ has a partition\footnote{This partition is of course obtained by setting $V_j=\phi^{-1}(i)$ for every $1 \leq i \leq f$.} $V(G) = V_1 \cup \dots \cup V_f$ such that every edge $e$ of $G$ contains at most one vertex from each of the sets $V_1,\dots,V_f$, and furthermore, if an edge of $G$ contains vertices from the sets $V_{i_1},\ldots,V_{i_k}$, then $\{i_1,\ldots,i_k\} \in E(F)$. We call such a partition $V_1,\dots,V_f$ an {\em $F$-partition} of $G$. A copy of $F$ in $G$ is called {\em canonical} if it is of the form $v_1,\dots,v_f$ with $v_i \in V_i$ playing the role of $i$ for every $i = 1,\dots,f$.

For an $F$-partite $k$-graph $G$ with $F$-partition $\mathcal{P} = (V_1,\dots,V_f)$, let $\rem_k(G,\P,F)$ denote the minimum number of edges of $G$ that need to be deleted to destroy all canonical copies of $F$.
We stress that in this problem, $\mathcal{P}$ is given as part of the input.

In Section \ref{subsec:special} we prove three key lemmas which establish the NP-hardness of computing the function $\rem_k(G,\P,F)$ for certain special cases.
In Section \ref{subsec:combine} we will show how these lemmas can be combined in order to handle all finite families of non-$k$-partite $k$-graphs, thus proving Theorem \nolinebreak \ref{thm:main}.

\subsection{The Three Key Lemmas}\label{subsec:special}

Our goal in this section is to prove Lemmas \ref{lem:simplex}, \ref{lem:cycle} and \ref{lem:main}.
The first two deal with special cases of Theorem \ref{thm:main}, while the third
will help us in combining the first two in order to deal with all finite families of non-$k$-partite $k$-graphs.
The first special case of Theorem \ref{thm:main} we address is when $F$ is the complete $k$-graph on $k+1$ vertices, which we denote by $K_{k+1}^{(k)}$.

\begin{lemma}\label{lem:simplex}
For every $k \geq 2$, computing $\rem_k(G,\P,K_{k+1}^{(k)})$ is NP-hard.
\end{lemma}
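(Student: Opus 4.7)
The plan is to proceed by induction on $k \geq 2$. At each inductive step I reduce the partite $K_{k+1}^{(k)}$-removal problem to the partite $K_k^{(k-1)}$-removal problem for $(k-1)$-uniform hypergraphs, which is NP-hard by the inductive hypothesis. The base case $k=2$, which asserts that canonical triangle removal in a tripartite graph is NP-hard, will be proved separately via a reduction from a known NP-hard problem.

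For the inductive step, given a $(k-1)$-uniform $k$-partite input $(H, \P_H = (U_1, \ldots, U_k))$, I construct a $k$-uniform $(k+1)$-partite instance $(G, \P_G = (V_1, \ldots, V_{k+1}))$ with $V_i = U_i$ for $i \leq k$ and $V_{k+1} = \{w\}$ a new singleton. The edge set $E(G)$ has two types: (a) \emph{transversal edges} $\{u_1, \ldots, u_k\}$ for every $(u_1, \ldots, u_k) \in U_1 \times \cdots \times U_k$, and (b) \emph{lifted $H$-edges} $e \cup \{w\}$ for every $e \in E(H)$. Because $V_{k+1}$ is a singleton, every canonical copy of $K_{k+1}^{(k)}$ in $G$ has the form $(u_1, \ldots, u_k, w)$: its unique facet avoiding $w$ is the (always present) type-(a) edge, while its $k$ facets containing $w$ are the lifts of the $k$ facets of $(u_1, \ldots, u_k)$ in $H$. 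Consequently, canonical $K_{k+1}^{(k)}$-copies in $G$ correspond bijectively to canonical $K_k^{(k-1)}$-copies in $H$.

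I then claim that $\rem_k(G, \P_G, K_{k+1}^{(k)}) = \rem_{k-1}(H, \P_H, K_k^{(k-1)})$. The inequality $\leq$ is immediate: an $H$-solution $D_H$ lifts to a $G$-solution consisting of the type-(b) edges corresponding to $D_H$, of the same cardinality. For the reverse inequality, given a $G$-solution $D_G = D_a \sqcup D_b$ (split by edge type), let $T_a$ denote the set of canonical $K_{k+1}^{(k)}$-copies whose type-(a) edge lies in $D_a$ while none of whose type-(b) edges lies in $D_b$. Since distinct canonical copies have distinct type-(a) edges, $|T_a| \leq |D_a|$. I build an $H$-solution $D_H' := \phi(D_b) \cup \{e_C : C \in T_a\}$, where $\phi$ sends a lifted edge $e \cup \{w\}$ back to $e$, and $e_C$ is any $H$-edge of the $K_k^{(k-1)}$-copy associated to $C$. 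A short check shows $D_H'$ destroys every canonical $K_k^{(k-1)}$ in $H$, and $|D_H'| \leq |D_b| + |T_a| \leq |D_G|$, giving the desired equality.

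The main obstacle is the base case $k=2$: NP-hardness of computing $\rem_2(G, \P, K_3^{(2)})$ for tripartite $G$. For this I would reduce from a standard NP-hard problem such as Min Triangle Edge Cover in general graphs (Yannakakis) or Max-Cut, designing a gadget whose canonical triangles encode the constraints of the source problem rigidly enough that the minimum canonical-triangle cover recovers the source optimum exactly (up to a known shift). The delicate point is preventing "cheating" covers that delete all edges incident to a few vertices to kill many triangles at once; this is typically handled by introducing auxiliary dummy vertices or by sufficiently blowing up the instance so that such shortcut covers are never optimal.
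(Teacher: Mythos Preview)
Your inductive step is correct and is essentially the paper's argument carried out one uniformity at a time rather than all at once: the paper reduces directly from $k=2$ to arbitrary $k$ by adjoining $k-2$ new singleton parts simultaneously, and the resulting $k$-graph coincides with what one gets by iterating your construction $k-2$ times. For the harder inequality the paper normalizes an optimal cover so that it uses only ``lifted'' edges (replacing any edge through $a,b,c$ by a lifted edge on the same copy), whereas you use the counting argument via $T_a$; both work.

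The gap is the base case $k=2$. You correctly flag it as the crux, but your final paragraph is only a plan, and the obstacles you allude to are genuine. The paper itself observes (just after its proof of this lemma) that the obvious idea---reducing Vertex Cover by adding an apex vertex---fails: forcing the resulting graph to be tripartite forces the Vertex Cover instance to be bipartite, where it is polynomial by K\"onig--Egerv\'ary. Your proposed sources (general triangle edge-deletion, Max-Cut) face the same kind of obstacle: there is no evident way to embed a non-tripartite instance into a tripartite one while controlling $\rem$ exactly, and blowing up or adding dummies does not obviously prevent the ``cheating'' covers you mention. The paper's actual proof of the $k=2$ case (its Lemma~\ref{lem:triangle}) is an explicit gadget reduction from 3-SAT: each variable becomes a triangle, each clause becomes one of two fixed small gadgets $\J_1,\J_2$ (chosen according to the clause's sign pattern), the whole graph is properly 3-colored by design, and a case analysis of minimum triangle covers in $\J_1,\J_2$ shows that the minimum canonical-triangle cover equals a known quantity if and only if the formula is satisfiable. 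Without a concrete construction of this kind, your argument for Lemma~\ref{lem:simplex} is incomplete.
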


As it turns out (see also the discussion after the proof of Lemma \ref{lem:simplex}), to prove this lemma one can first prove the graph case (i.e., the case $k=2$), and then prove the cases $k \geq 3$ via a reduction from the case $k=2$.
Note that for $k=2$, $K_{3}^{(2)}$ is the $3$-cycle, denoted $C_3$.
Since the proof of this special case is less graph-theoretic in nature, we defer the proof of the following lemma to Section \ref{sec:triangle reduction}.

\begin{lemma}\label{lem:triangle}
Computing $\rem_2(G,\P,C_3)$ is NP-hard.
\end{lemma}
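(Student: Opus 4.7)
The plan is to prove Lemma \ref{lem:triangle} by a polynomial-time reduction from a standard NP-complete problem, most naturally 3-SAT. Given an instance $\phi$ with $n$ variables and $m$ clauses, I would construct in polynomial time a tripartite graph $G$ equipped with an explicit tripartition $\P = (V_1, V_2, V_3)$ and a threshold $T = T(n,m)$, so that $\rem_2(G,\P,C_3) \le T$ if and only if $\phi$ is satisfiable. An alternative starting point is the (classically NP-hard) non-partite triangle-edge-deletion problem, combined with a tripartite blow-up, but one has to work to turn the natural blow-up (three ordered copies of $V(H)$ with edges placed to create a canonical triangle per triangle of $H$) into an exact, rather than constant-factor, reduction; I would try the 3-SAT route first because the exact correspondence is easier to engineer.

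The construction uses two kinds of gadgets. A \emph{variable gadget} for each $x_i$ is a small tripartite graph whose cheapest canonical-triangle-destroying edge-sets come in exactly two configurations of the same size, encoding the two truth values of $x_i$; every other local deletion strategy inside the gadget is strictly more expensive. A \emph{clause gadget} for each $C_j = \ell_1 \vee \ell_2 \vee \ell_3$ is a tripartite graph that admits a canonical-triangle-destroying edge-set meeting its lower bound only when at least one of the three literals is already set to TRUE by the assignments propagated from the variable gadgets. The two kinds of gadgets are linked by short \emph{wires} --- chains of canonical triangles --- that transmit the truth value chosen at a variable gadget to every clause containing that variable.

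Correctness is then checked in two directions. From a satisfying assignment, one picks the deletion inside each variable gadget corresponding to the chosen truth value and, inside each clause gadget, the cheap deletion enabled by a satisfied literal; the total is exactly $T$. Conversely, given a canonical-triangle-destroying edge-set of size $\le T$, the tight lower bound on each variable gadget forces it into one of its two canonical configurations, yielding a well-defined truth assignment, and the tight lower bound on each clause gadget then forces at least one literal per clause to be TRUE, so $\phi$ is satisfied.

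The main obstacle is the tripartite constraint: every triangle we exploit must have exactly one vertex in each of $V_1, V_2, V_3$, which rules out many of the standard gadgets from the non-partite triangle-deletion literature (where one freely uses triangles supported on any three vertices). A secondary difficulty is ruling out \emph{shortcut} deletions that share edges between distinct gadgets to undercut the threshold $T$; this is typically handled by keeping inter-gadget interactions sparse, e.g.~by routing wires through buffer vertices and by padding each gadget so that any deletion deviating from the two canonical variable-gadget configurations is strictly more expensive. Once the gadgets are designed and their interactions bounded, the remaining accounting is a routine edge count.
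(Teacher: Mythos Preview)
Your high-level plan matches the paper's approach: a reduction from 3-SAT, with one triangle per variable (deleting one of two designated edges encodes the truth value) and one gadget per clause. But the proposal stops exactly where the difficulty begins. You correctly flag the tripartite constraint as the main obstacle and then say nothing concrete about how to overcome it; designing tripartite clause gadgets with the exact deletion-count properties you list is the entire content of the proof, not a routine afterthought. The paper's actual contribution in this section is two explicit gadgets $\mathcal{J}_1,\mathcal{J}_2$ (Figure~\ref{fig:gadgets}) together with the verification of their properties (Claims~\ref{claim:gadget correctness} and~\ref{claim:gadget 3-coloring}).

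There is also a subtlety your outline does not anticipate. In the paper the variable gadget is a single triangle, and the two truth values correspond to deleting the edge colored $\{1,2\}$ versus the edge colored $\{1,3\}$. Each primary edge of a clause gadget therefore inherits a color pair determined by the \emph{sign} of the corresponding literal. A clause whose three literals all have the same sign and a clause with mixed signs thus pose genuinely different $3$-coloring extension problems, and a single tripartite clause gadget cannot serve both; the paper needs, and builds, two distinct gadgets for the two cases. Your generic ``clause gadget'' would hit this wall. Finally, the paper uses no wires: the variable triangles are shared directly among the clause gadgets (each $T_i$ plays the role of one of $T_1,T_2,T_3$ inside every clause gadget mentioning $x_i$), and since the internal edges of distinct clause gadgets are pairwise disjoint, the ``shortcut'' accounting is immediate rather than requiring the buffering and padding you propose.
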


\noindent
We now prove Lemma \ref{lem:simplex} by reducing it to Lemma \ref{lem:triangle}.

\begin{proof}[Proof of Lemma \ref{lem:simplex}]
The case $k=2$ is the statement of Lemma \ref{lem:triangle}. Fixing $k \geq 3$, we now reduce the problem of computing $\rem_2(G,\P,C_3)$ to the problem of computing $\rem_k(H,\mathcal{Q},K_{k+1}^{(k)})$. Let $G$ be a 3-partite graph with $C_3$-partition $\P = (A,B,C)$. Define a $k$-graph $H$ as follows: The vertex-set of $H$ consists of $V(G)$ and new vertices $v_1,\dots,v_{k-2}$. For every edge $e \in E(G)$, add to $H$ the edge $e \cup \{v_1,\dots,v_{k-2}\}$. Finally, for each triple $(a,b,c) \in A \times B \times C$ and for each $1 \leq i \leq k-2$, add to $H$ the edge $\{a,b,c\} \cup \{v_j : j \in [k-2] \setminus \{i\}\}$.
Clearly, $H$ is $K_{k+1}^{(k)}$-partite with partition $\mathcal{Q} = (A,B,C,\{v_1\},\dots,\{v_{k-2}\})$, hence every copy of $K^{(k)}_{k+1}$ has vertices $a,b,c, v_1,\ldots,v_{k-2}$ for some $a \in A, b \in B, c\in C$.
Moreover, if such a $(k+1)$-tuple $(a,b,c,v_1,\ldots,v_{k-2})$ forms a copy of $K^{(k)}_{k+1}$ then $a,b,c$ is a triangle in $G$.
		We claim that
		\begin{equation}\label{eq:simplex}
			\rem_2(G,(A,B,C),K_3) = \rem_k(H,(A,B,C,v_1,\dots,v_{k-2}),K_{k+1}^{(k)}).
		\end{equation}
		First, let $E \subseteq E(G)$ be an edge-set such that $G - E$ has no triangles. Let $E' \subseteq E(H)$ be the set of all edges $e \cup \{v_1,\dots,v_{k-2}\}$, $e \in E$. By the observation we made prior to \eqref{eq:simplex}, we see that $H - E'$ is $K_{k+1}^{(k)}$-free, showing that the RHS of \eqref{eq:simplex} is not larger than the LHS.
		
		In the other direction, let $E' \subseteq E(H)$ be an edge-set of minimal size such that $H - E'$ is $K_{k+1}^{(k)}$-free.
		If, for some $(a,b,c) \in A \times B \times C$, there is an edge $e' \in E'$ which contains $a,b,c$,
		then we can remove $e'$ from $E'$, and instead add to $E'$ the edge $e'' = \{a,b\} \cup \{v_1,\dots,v_{k-2}\}$. Note that $e'$ is part of at most one copy of $K_{k+1}^{(k)}$ in $H$, namely the copy $a,b,c,v_1,\dots,v_{k-2}$ (which is a copy of $K_{k+1}^{(k)}$ if and only if $a,b,c$ is a triangle in $G$), and $e''$ is also an edge of this copy. Hence, after replacing $e'$ with $e''$ we still have that $H - E'$ is $K_{k+1}^{(k)}$-free. After repeating this operation for each triangle in $G$, we may assume that every edge in $E'$ is of the form $e \cup \{v_1,\dots,v_{k-2}\}$ for some $e \in E(G)$. Now, let $E$ be the set of all $e \in E(G)$ such that $e \cup \{v_1,\dots,v_{k-2}\} \in E'$. Then $G-E$ is $K_3$-free, as otherwise $H-E'$ would not be $K_{k+1}^{(k)}$-free. This shows that $|E'| \geq \rem_2(G,(A,B,C),K_3)$, completing \nolinebreak the \nolinebreak proof.
\end{proof}

Examining the proof above suggests the following approach to proving Lemma \ref{lem:triangle}: Take an instance graph $G$ to the Vertex-Cover problem which is triangle free, add a new vertex $v$, and connect $v$ to all of $G$'s vertices. However, in order for the new graph to be $3$-partite, we need $G$ to be bipartite. And in this case the Vertex Cover problem is actually solvable in polynomial time, thanks to the K\"{o}nig-Egerv\'ary theorem. This perhaps explains the hardness of proving \nolinebreak Lemma \nolinebreak \ref{lem:triangle}.

Our second key lemma concerns the case where $k=2$ and $F$ is an $\ell$-cycle, which is denoted by \nolinebreak $C_{\ell}$.

\begin{lemma}\label{lem:cycle}
For every $\ell \geq 3$, computing $\rem_2(G,\P,C_{\ell})$ is NP-hard.
\end{lemma}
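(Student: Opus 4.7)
The plan is to reduce the $\ell = 3$ case, which is Lemma \ref{lem:triangle}, to every $\ell \geq 4$ via a subdivision construction. Starting from a 3-partite instance $(G, (A, B, C))$ for $C_3$, I will stretch each $CA$-edge of $G$ into a path of length $\ell - 2$ through $\ell - 3$ fresh, edge-private intermediate vertices, so that the canonical $\ell$-cycles of the new graph $H$ correspond exactly to the triangles of $G$.

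Concretely, take $V(G) \subseteq V(H)$ and keep every edge of $G$ between $A$ and $B$ and between $B$ and $C$. For each edge $ca \in E_G(C, A)$, introduce new vertices $u_4^{ca}, u_5^{ca}, \ldots, u_\ell^{ca}$ (private to that edge), and add the edges $c u_4^{ca}, u_4^{ca} u_5^{ca}, \ldots, u_{\ell-1}^{ca} u_\ell^{ca}, u_\ell^{ca} a$, forming a path of length $\ell - 2$ from $c$ to $a$. Define the $C_\ell$-partition $\mathcal{Q} = (V_1, \ldots, V_\ell)$ by $V_1 = A$, $V_2 = B$, $V_3 = C$, and $V_i = \{u_i^{ca} : ca \in E_G(C, A)\}$ for $4 \leq i \leq \ell$. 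All edges of $H$ lie between parts consecutive in the cyclic order $V_1, V_2, \ldots, V_\ell$, so $(H, \mathcal{Q})$ is $C_\ell$-partite.

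The key structural claim is that canonical $\ell$-cycles of $(H, \mathcal{Q})$ are in bijection with triangles of $(G, (A, B, C))$: any such cycle must be of the form $a, b, c, u_4, \ldots, u_\ell$ with $a \in A$, $b \in B$, $c \in C$, and since the subdivision vertices are private to their edges, following the path forces $u_i = u_i^{ca}$ for a single $ca \in E_G(C, A)$, which together with the forced edges $ab$ and $bc$ identifies a triangle $abc$ of $G$. Given this bijection, I would verify $\rem_2(G, (A, B, C), C_3) = \rem_2(H, \mathcal{Q}, C_\ell)$ by two-way edge substitution: from a triangle-destroying $E \subseteq E(G)$, keep its $AB$- and $BC$-edges and replace each $ca \in E$ by the single path-edge $c u_4^{ca}$ (valid because the path of $ca$ is shared by all triangles through $ca$, so removing any one of its edges destroys them all); conversely, from an $\ell$-cycle-destroying $E' \subseteq E(H)$, keep its $AB$- and $BC$-edges and collapse each path-edge lying on $ca$'s path back to $ca$.

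I do not foresee a substantive obstacle. The one design detail that is essential for soundness is the use of edge-private subdivision vertices $u_i^{ca}$; sharing vertices across different $CA$-edges would allow canonical $\ell$-cycles formed by concatenating pieces of different paths, with no corresponding triangle of $G$. With edge-private vertices, the reduction is clearly polynomial and NP-hardness of $\rem_2(\cdot, \cdot, C_\ell)$ transfers directly from Lemma \ref{lem:triangle}.
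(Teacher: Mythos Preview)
Your proposal is correct and essentially identical to the paper's proof: both reduce from the $\ell=3$ case by subdividing one class of triangle edges into paths of length $\ell-2$ using edge-private intermediate vertices, then verify the bijection between canonical $\ell$-cycles and triangles and the equality of the two $\rem$ values by the same two-way edge substitution. The only cosmetic difference is that the paper subdivides the $AB$-edges (placing the new parts between $A$ and $B$) whereas you subdivide the $CA$-edges, which is immaterial by the symmetry of the roles of $A,B,C$.
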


\begin{proof}
		The case $\ell = 3$ is the statement of Lemma \ref{lem:triangle}.
		Let now $\ell \geq 4$. We reduce the problem of computing $\rem_2(G,\P,C_{3})$ to the problem of computing $\rem_2(G',\P',C_{\ell})$.
		So let $G$ be an $n$-vertex input graph with a $C_3$-partition $\P = (A,B,C)$. Construct a new graph $G'$ as follows: $G'$ has $\ell$ parts $A,V_1,\dots,V_{\ell-3},B,C$.
		We set $G'[B,C] = G[B,C]$ and $G'[A,C] = G[A,C]$ (i.e., the bipartite graphs $(B,C)$ and $(A,C)$ are the same in $G$ and $G'$). Next, for each edge $e = ab \in E(G)$ with $a \in A, b\in B$, add new vertices $v^e_1 \in V_1,\dots,v^e_{\ell-3} \in V_{\ell-3}$ and add the path $P_e := (a,v^e_1,\dots,v^e_{\ell-3},b)$ to $G'$. This completes the definition of $G'$. It is immediate that $G'$ is $C_{\ell}$-partite with partition $(A,V_1,\dots,V_{\ell-3},B,C)$.
		Also, the number of vertices of $G'$ is $n + (\ell-3) \cdot e_G(A,B) \leq n + (\ell-3)n^2 = \poly(n)$. Finally, it is easy to see that every canonical $C_{\ell}$-copy in $G'$ is of the form $(a,P_{ab},b,c,a)$ for a triangle $a,b,c$ in $G$.
		We claim that
		\begin{equation}\label{eq:cycle}
		\rem_2(G,(A,B,C),C_3) = \rem_2(G',(A,V_1,\dots,V_{\ell-3},B,C),C_{\ell}).
		\end{equation}
		First, let $E \subseteq E(G)$ be a set of edges such that $G-E$ is $C_3$-free. Define an edge-set $E' \subseteq E(G')$ as follows. For each $e \in E$, if $e \in E(B,C)$ or $e \in E(A,C)$, then put $e$ into $E'$. And if $e \in E(A,B)$ then put one of the edges of the path $P_e$ into $E'$. Then $|E'| = |E|$, and $G' - E'$ has no canonical\footnote{$G'-E'$ might contain non-canonical copies of $C_{\ell}$. For example, if $\ell$ is even, then $G'-E'$ might contain a copy of $C_{\ell}$ between (say) $B$ and $C$.} $C_{\ell}$-copies. This shows that the RHS of \eqref{eq:cycle} is not larger than the LHS.
		
		In the other direction, let $E' \subseteq E(G')$ be a set of edges such that $G'-E'$ has no canonical $C_{\ell}$-copies. Define an edge-set $E \subseteq E(G)$ as follows. For each $e \in E'$, if $e \in E(B,C)$ or $e \in E(A,C)$, then put $e$ into $E$. And if $e$ is an edge of $P_{ab}$ for some $ab \in E(G)$ with $a \in A$, $b \in B$, then put $ab$ into $E$. Then $|E| \leq |E'|$, and $G - E$ is $C_3$-free. This completes the proof.
\end{proof}
\noindent
Lemma \ref{lem:main} below is our third key lemma, and forms the heart of the proof.
	
\begin{lemma}\label{lem:main}
Let $F$ be a $k$-graph with vertex-set $[f]$, and suppose that there are integers $3 \leq \ell \leq f$ and $2 \leq s \leq k$ such that the following holds:
\begin{enumerate}
			\item For every edge $a \in E(F)$, $|a \cap [\ell]| \leq s$.
			\item Computing $\rem_s(G,\P,L)$ is NP-hard, where $L$
			is the $s$-graph on the vertex-set $[\ell]$ with edge-set $\{e \cap [\ell] : e \in E(F), |e \cap [\ell]| = s\}$.
		\end{enumerate}
	Then computing $\rem_k(G,\P,F)$ is NP-hard.
\end{lemma}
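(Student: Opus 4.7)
The strategy is to reduce $\rem_s(\cdot,\cdot,L)$ to $\rem_k(\cdot,\cdot,F)$ via an explicit construction. Given an $L$-partite $s$-graph $G$ with $L$-partition $\P=(V_1,\dots,V_\ell)$, I will construct an $F$-partite $k$-graph $H$ on the vertex set $V_1\cup\cdots\cup V_\ell\cup\{v_{\ell+1},\dots,v_f\}$ with $F$-partition $\mathcal{Q}=(V_1,\dots,V_\ell,\{v_{\ell+1}\},\dots,\{v_f\})$. The edges of $H$ come in two types. The \textbf{main} edges: for each $e\in E(F)$ with $|e\cap[\ell]|=s$ (so $e\cap[\ell]\in E(L)$) and each edge $g\in E(G)$ on the parts $e\cap[\ell]$, include $g\cup\{v_h:h\in e\setminus[\ell]\}$. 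The \textbf{auxiliary} edges: for each $e\in E(F)$ with $|e\cap[\ell]|<s$ and each tuple $(u_i)_{i\in e\cap[\ell]}$ with $u_i\in V_i$, include $\{u_i\}\cup\{v_h:h\in e\setminus[\ell]\}$. A direct check shows that the canonical $F$-copies of $H$ are in bijection with the canonical $L$-copies of $G$ via $(u_1,\dots,u_\ell)\mapsto(u_1,\dots,u_\ell,v_{\ell+1},\dots,v_f)$, because the presence of all main edges forces $(u_1,\dots,u_\ell)$ to span an $L$-copy, while auxiliary edges are always included in $H$ by construction.

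The claim I want to establish is $\rem_k(H,\mathcal{Q},F)=\rem_s(G,\P,L)$, from which NP-hardness of $\rem_k(\cdot,\cdot,F)$ follows immediately. The easy direction ($\le$) lifts any $L$-hitting set $E\subseteq E(G)$ to $H$: for each $g\in E$, pick a single $e'\in E(F)$ with $e'\cap[\ell]=g$ (which exists since $g\in E(L)$) and delete the main edge $g\cup\{v_h:h\in e'\setminus[\ell]\}$; the resulting set has size $|E|$ and hits every canonical $F$-copy, because each such $F$-copy corresponds to an $L$-copy containing some $g\in E$. For the hard direction ($\ge$), my plan is to show that any $F$-hitting set $E'\subseteq E(H)$ can be assumed, without increasing $|E'|$, to consist entirely of main edges. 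Once this reduction is in hand, projecting each main edge $g\cup\{v_h\}\in E'$ to $g\in E(G)$ produces an $L$-hitting set of size at most $|E'|$, since every canonical $L$-copy $C$ corresponds to an $F$-copy whose hitting edge in $E'$, being main, projects into $E$.

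The reduction-to-main-edges step is the crux and where I expect the main difficulty. I plan to carry it out by an iterative exchange: if an auxiliary edge $e_H\in E'$ uniquely hits no canonical $F$-copy, then drop it at no cost; otherwise, select a canonical $F$-copy $\Phi$ uniquely hit by $e_H$, and use the non-emptiness of $E(L)$ (guaranteed by condition~2 of the lemma) to swap in a main edge of $\Phi$ in place of $e_H$. The main obstacle is that a single auxiliary edge, being defined by fewer than $s$ vertices of $[\ell]$, can lie in many canonical $F$-copies, whereas a single main edge lies only in those $F$-copies through one $L$-edge, so a naive swap risks leaving other uniquely-hit $F$-copies uncovered while compensating with multiple main edges risks blowing up $|E'|$. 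To resolve this, my plan is to process auxiliary edges in order of increasing $|e\cap[\ell]|$ and charge each swap via a structural bijection that matches the uniquely-hit $F$-copies against a bounded family of main edges. The hypothesis that $L$ is exactly the $s$-uniform projection of $E(F)$ onto $[\ell]$ is used crucially here, since it pins down which main edges are available for the swap and ensures that each canonical $F$-copy of $H$ contains enough of them to support the exchange.
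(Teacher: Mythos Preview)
Your construction with singleton parts $\{v_{\ell+1}\},\dots,\{v_f\}$ and the claimed equality $\rem_k(H,\mathcal{Q},F)=\rem_s(G,\P,L)$ is \emph{false} in general, and no exchange argument can salvage it. Here is a concrete counterexample. Take $k=2$, $F=C_4$ on $\{1,2,3,4\}$, $\ell=3$, $s=2$; then $L$ is the path $1\text{--}2\text{--}3$. Let $G$ have parts $V_1=\{a\}$, $V_2=\{b_1,b_2\}$, $V_3=\{c_1,c_2\}$ and edges $ab_1,ab_2,b_1c_1,b_2c_2$. The two canonical $L$-copies $(a,b_1,c_1)$ and $(a,b_2,c_2)$ are edge-disjoint, so $\rem_s(G,\P,L)=2$. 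In your $H$, however, both canonical $C_4$-copies $(a,b_i,c_i,v_4)$ share the auxiliary edge $\{a,v_4\}$, so deleting that single edge kills all canonical $F$-copies and $\rem_k(H,\mathcal{Q},F)=1<2$. The very obstacle you identify---that an auxiliary edge, being determined by fewer than $s$ vertices of $[\ell]$, can lie in many $F$-copies---is fatal here: the optimal hitting set genuinely consists of auxiliary edges, and any swap to main edges strictly increases its size. Your proposed ``structural bijection'' cannot exist.

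The paper's remedy is to abandon the singleton parts and instead take $|V_i|=N:=n^s$ for $\ell<i\le f$, proving $\rem_k(H,\mathcal{Q},F)=\rem_s(G,\P,L)\cdot N^{k-s}$. The blow-up makes auxiliary edges \emph{relatively} expensive: each main edge lies over one $G$-edge and one $(k-s)$-tuple from $V_{\ell+1},\dots,V_f$, whereas each auxiliary edge fixes at least $k-s+1$ coordinates there. The lower bound is then obtained not by a swap argument but by a random restriction: sample $v_i\in V_i$ uniformly for $i>\ell$ and look at the edges of a minimum hitting set $E'$ surviving in $V_1\cup\dots\cup V_\ell\cup\{v_{\ell+1},\dots,v_f\}$. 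A main edge survives with probability $N^{-(k-s)}$ while an auxiliary edge survives with probability at most $N^{-(k-s+1)}$, so in expectation the surviving auxiliary edges are negligible; conditioning on none of them surviving, the surviving main edges must project to an $L$-hitting set in $G$, which yields $|E'|\ge \rem_s(G,\P,L)\cdot N^{k-s}$. This averaging is precisely what replaces the impossible exchange step you attempted.
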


\begin{proof}
		To prove the lemma, we reduce the problem of computing $\rem_s(G,\P,L)$ to the problem of computing $\rem_k(H,\mathcal{Q},F)$. Let $G$ be an $n$-vertex $L$-partite $s$-graph with $L$-partition $\P = (V_1,\dots,V_{\ell})$.
		We will use the letters $a,b$ to denote edges of $F$, and $e,e'$ to denote edges of $G$.
		We construct an $F$-partite $k$-graph $H$ with $F$-partition $\mathcal{Q} = (V_1,\dots,V_{\ell},V_{\ell+1},\dots,V_f)$, where $V_{\ell+1},\dots,V_f$ are new parts with $|V_i| = N := n^s$ for $\ell+1 \leq i \leq f$.
		The edges of $H$ are defined as follows: for each edge $a$ of $F$, if $|a \cap [\ell]| \leq s-1$, then put a complete $k$-partite $k$-graph on the parts $(V_i)_{i \in a}$. Let now $a \in E(F)$ with $|a \cap [\ell]| \geq s$. Then $|a \cap [\ell]| = s$ by Item 1 in the lemma, and $a \cap [\ell] \in E(L)$ by the definition of $L$. Now, for each choice of $v_i \in V_i$ for $i \in a$, make $\{v_i : i \in a\}$ an edge of $H$ if and only if $\{v_i : i \in a \cap [\ell]\}$ is an edge of $G$. Thus, if $\{v_i : i \in a \cap [\ell]\}$ is an edge of $G$, then $\{v_i : i \in a\}$ is an edge of $H$ for every $(k-s)$-tuple $(v_i)_{i \in a \setminus [\ell]}$, $v_i \in V_i$. Note that there are $N^{k-s}$ such choices of $(v_i)_{i \in a \setminus [\ell]}$.
		The resulting $k$-graph is $H$. By definition, $H$ is $F$-partite with $F$-partition $\mathcal{Q} = (V_1,\dots,V_f)$. Also, $|V(H)| = |V(G)| + (f-\ell) \cdot N = n + (f-\ell) \cdot n^s = \poly(n)$. We claim that
		\begin{equation}\label{eq:cycle reduction}
		\rem_k(H,(V_1,\dots,V_f),F) = \rem_s(G,(V_1,\dots,V_{\ell}),L) \cdot N^{k-s}.
		\end{equation}
		
		First, let $E \subseteq E(G)$ be such that $G-E$ has no canonical copies of $L$.
		For each edge $b \in E(L)$, fix an edge $a_b$ of $F$ with $a_b \cap [\ell] = b$ (such $a_b$ exists by the definition of $L$). 
		Now, define a set of edges $E' \subseteq E(H)$ as follows. For each $b \in E(L)$ and each edge $\{v_i : i \in b\} \in E$, add to $E'$ all edges of the form $\{v_i : i \in b\} \cup \{u_i : i \in a_b \setminus b\}$ with $u_i \in V_i$; namely, add to $E'$ all edges of $H$ which go between the parts $(V_i)_{i \in a_b}$ and contain $\{v_i : i \in b\}$.
		Then $|E'| = |E| \cdot N^{k-s}$.
		We claim that $H - E'$ has no canonical $F$-copies\footnote{Actually, $H - E'$ has no canonical $F'$-copies where $F'$ is the subgraph of $F$ consisting of the edges $\{a_b:b \in E(L)\}$.}. Indeed, suppose that $v_1,\dots,v_f$ make a canonical $F$-copy in $H$, with $v_i \in V_i$.
		For every $b \in E(L)$, we have $\{v_i : i \in a_b\} \in E(H)$ (as $v_1,\dots,v_f$ make a canonical copy of $F$). By the definition of $H$, it follows that $\{v_i : i \in b\}$ is an edge of $G$. Thus, $v_1,\dots,v_{\ell}$ make a canonical copy of $L$ in $G$. Therefore, there is $b \in E(L)$ such that $\{v_i : i \in b\} \in E$ (as $G-E$ has no canonical copies of $L$). Now, by the definition of $E'$, the edge $\{v_i : i \in a_b\}$ is in $E'$, meaning that $E'$ contains some edge of the copy $v_1,\dots,v_f$. This proves that $H - E'$ has no canonical $F$-copies, implying that the LHS in \eqref{eq:cycle reduction} is not larger than the RHS.
		
		In the other direction, let $E' \subseteq E(H)$ be a set of edges such that $H-E'$ has no canonical $F$-copies. For each $\ell+1 \leq i \leq f$, sample a vertex $v_i \in V_i$ uniformly at random. Let us introduce some notation.
		Let $E'_1$ be the set of edges $e' \in E'$ which intersect at most $s-1$ of the parts $V_1,\dots,V_{\ell}$, and let $E'_2$ be the set of edges $e' \in E'$ which intersect exactly $s$ of the parts $V_1,\dots,V_{\ell}$. For $i=1,2$, let $E''_i$ be the set of edges in $E'_i$ which are contained in $V_1 \cup \dots \cup V_{\ell} \cup \{v_{\ell+1},\dots,v_f\}$. Observe that
		$\mathbb{E}[|E''_1|] \leq |E'_1|/N^{k-s+1}$
		and
		$\mathbb{E}[|E''_2|] = |E'_2|/N^{k-s}.$
		Now consider any given outcome for $v_{\ell+1},\dots,v_f$ such that $E''_1 = \emptyset$.
		Let $E$ be the set of all edges of $G$ which are contained in some edge of $E''_2$.
		Note that $|E| \leq |E''_2|$, because every edge in $E''_2$ intersects $V(G) = V_1 \cup \dots \cup V_{\ell}$ in exactly $s$ vertices\footnote{It may be the case that $|E| < |E''_2|$, since an edge of $E$ might be contained in several edges of $E''_2$ (if $F$ has several edges which have the same size-$s$ restriction to $[\ell]$).}.
		We claim that $G-E$ has no canonical copy of $L$.
		To this end, we show that if there is a canonical copy of $L$ in $G-E$ on the vertices $v_1 \in V_1,\ldots ,v_{\ell} \in V_{\ell}$, then there is a canonical copy of $F$ in $H-E'$ on the vertices $v_1,\ldots,v_{\ell},v_{\ell+1},\dots,v_f$, which would be a contradiction to $H-E'$ being $F$-free.
		Indeed, take any edge $a \in E(F)$, and let $b = a \cap [\ell]$ be the restriction of $a$ to $[\ell]$.
		If $|b| < s$ then $\{v_i : i \in a\}$ is an edge of $H$ (by the definition of $H$), and since we assume that $E''_1=\emptyset$, we also have $\{v_i : i \in a\} \notin E'$.
		If $|b|=s$ then $b$ is an edge of $L$ (by the definition of $L$). Also, as $v_1,\dots,v_{\ell}$ are assumed to make a canonical copy of $L$ in $G-E$, we have that
		$\{v_i : i \in b\} \in E(G)$ and hence $\{v_i : i \in a\} \in E(H)$ (by the definition of $H$). Moreover, $\{v_i : i \in a\} \notin E'_2$ because
		$\{v_i : i \in b\} \notin E$. So it follows that $\{v_i : i \in a\} \in H-E'$. This proves our claim that $G-E$ has no canonical copies of $L$.
		
		We have thus proved that if $E''_1 = \emptyset$, then $|E''_2| \geq \rem_s(G,(V_1,\dots,V_{\ell}),L) =: M$. Note that $M \leq e(G) \leq \binom{n}{s} \leq N$. As $\mathbb{E}[|E''_1|] \leq |E'_1|/N^{k-s+1}$, we have $\mathbb{P}[E''_1 = \emptyset] \geq 1 - |E'_1|/N^{k-s+1}$. Thus,
		$$
		\mathbb{E}\left[|E''_2|\right] \geq \mathbb{E}\left[|E''_2|~|~E''_1 = \emptyset\right] \cdot \mathbb{P}\left[E''_1 = \emptyset\right] \geq M \cdot \left(1 - \frac{|E'_1|}{N^{k-s+1}} \right).
		$$
		On the other hand, $\mathbb{E}[|E''_2|] = |E'_2|/N^{k-s}$. It follows that
		$$
		|E'_2| \geq N^{k-s} \cdot M \cdot \left(1 - \frac{|E'_1|}{N^{k-s+1}} \right) = M \cdot N^{k-s} - \frac{M |E'_1|}{N} \geq M \cdot N^{k-s} - |E'_1|.
		$$
		Thus, $|E'| = |E'_1| + |E'_2| \geq M \cdot N^{k-s} = \rem_s(G,(V_1,\dots,V_{\ell}),L) \cdot N^{k-s}$. This shows that the LHS of \eqref{eq:cycle reduction} is not smaller than the RHS, completing the proof of \eqref{eq:cycle reduction}, and hence proving the lemma.
	\end{proof}

\subsection{Putting Everything Together}\label{subsec:combine}

We first combine Lemmas \ref{lem:simplex}, \ref{lem:cycle} and \ref{lem:main} in order to prove a partite version of Theorem \ref{thm:main} for a single forbidden subgraph.
For the proof, we need to recall the notion of a shadow.
For a $k$-graph $F$ and an integer $2 \leq s \leq k$, the {\em $s$-shadow} of $F$, denoted $\partial_s(F)$, is the $s$-graph with vertex-set $V(F)$, where $a \in \binom{V(F)}{s}$ is an edge if and only if there is $e \in E(F)$ with $a \subseteq e$. Note that for $s = k$ we have $\partial_k(F) = F$.

\begin{lemma}\label{lem:core hardness}
Let $F$ be a non-$k$-partite $k$-graph. Then computing $\rem_k(G,\P,F)$ is NP-hard.
\end{lemma}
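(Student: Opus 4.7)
The plan is to identify, for the given non-$k$-partite $F$, a subset $U \subseteq V(F)$ and an integer $2 \leq s \leq k$ fitting the hypotheses of Lemma \ref{lem:main}, with the resulting $s$-graph $L$ being either a cycle $C_\ell$ (NP-hardness by Lemma \ref{lem:cycle}) or $K_{s+1}^{(s)}$ (NP-hardness by Lemma \ref{lem:simplex}). The starting point is the elementary observation that $F$ is $k$-partite if and only if $\partial_2(F)$ is properly $k$-colorable: the $k$ vertices of each $F$-edge span a $K_k$ in $\partial_2(F)$, so rainbow $k$-colorings of $F$ coincide with proper $k$-colorings of $\partial_2(F)$. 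Hence the hypothesis gives $\chi(\partial_2(F)) \geq k+1$.

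\emph{Case A.} Suppose there exists $U \subseteq V(F)$ with $|U| = \ell \geq 3$, $\partial_2(F)[U] = C_\ell$, and no $F$-edge contains three vertices of $U$. This occurs whenever $\partial_2(F)$ has an induced cycle of length $\geq 4$ (take $U$ to be its vertex set; three vertices of $U$ inside a common $F$-edge would create a chord, impossible for $\ell \geq 4$), or whenever $\partial_2(F)$ has a triangle not contained in any $F$-edge (take $U$ to be that triangle). Either way, Lemma \ref{lem:main} applies with $s = 2$ and $L = C_\ell$, and Lemma \ref{lem:cycle} supplies the required NP-hardness.

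\emph{Case B.} Otherwise $\partial_2(F)$ has no induced cycle of length $\geq 4$ (so it is chordal) and every triangle of $\partial_2(F)$ is contained in some $F$-edge. Chordal graphs are perfect, so $\omega(\partial_2(F)) = \chi(\partial_2(F)) \geq k+1$. Fix a maximum clique $U^*$ of $\partial_2(F)$ with $|U^*| \geq k+1$, and set
\[
t^* := \max\{\, t : \text{every } t\text{-subset of } U^* \text{ is contained in some } F\text{-edge}\,\}.
\]
Since $U^*$ is a clique in $\partial_2(F)$, every pair of $U^*$ is covered; since we are in Case B, every triple of $U^*$ is also covered; so $t^* \geq 3$. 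Trivially $t^* \leq k$. If $t^* = k$, then every $k$-subset of $U^*$ is itself an $F$-edge (a size-$k$ edge containing a size-$k$ subset must equal it), so any $(k+1)$-subset $U \subseteq U^*$ has $F[U] = K_{k+1}^{(k)}$; invoking Lemma \ref{lem:main} with $s = k$ together with Lemma \ref{lem:simplex} concludes. If $t^* < k$, then by maximality of $t^*$ there is a $(t^*+1)$-subset $U \subseteq U^*$ not contained in any $F$-edge. Setting $s := t^*$ and $|U| = s+1$, the condition ``no $F$-edge contains $U$'' is precisely Condition~1 of Lemma \ref{lem:main}; moreover, each $s$-subset of $U$ is an $s$-subset of $U^*$ and hence covered by some $F$-edge, which combined with Condition~1 forces $L = K_{s+1}^{(s)}$. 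Lemma \ref{lem:simplex} and Lemma \ref{lem:main} then transfer the hardness to $F$.

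The main conceptual step is Case B: given that $\partial_2(F)$ is chordal with a clique $U^*$ of size $\geq k+1$, one must extract a subset $U \subseteq U^*$ on which both every $s$-subset is covered by an $F$-edge and $U$ itself is not. The correct $U$ is obtained by peeling $U^*$ down to the first level $t^*+1$ at which covering by $F$-edges breaks, which is exactly the configuration Lemma \ref{lem:main} needs in order to reduce from $\rem_s(\cdot,\cdot,K_{s+1}^{(s)})$.
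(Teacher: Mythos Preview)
Your proof is correct and follows essentially the same approach as the paper: split according to whether $\partial_2(F)$ has a long induced cycle, use chordality and perfectness to extract a $(k+1)$-clique in the complementary case, and then locate a minimal uncovered subset of that clique to feed into Lemma~\ref{lem:main}. The only cosmetic difference is that you route the ``uncovered triangle'' case through Case~A (via Lemma~\ref{lem:cycle} with $\ell=3$) and thereby obtain $t^* \geq 3$ in Case~B, whereas the paper keeps that case in the chordal branch (their minimal uncovered set $Y$ may have $|Y|=3$, giving $s=2$ and $L=K_3^{(2)}=C_3$ via Lemma~\ref{lem:simplex}); since $C_3 = K_3^{(2)}$, the two treatments coincide.
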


\begin{proof}
Write $V(F) = [f]$. Consider the graph $\partial_2(F)$. First, suppose that $\partial_2(F)$ contains an induced cycle of length at least $4$. Without loss of generality, let us suppose that this cycle is $(1,\dots,\ell,1)$ for some $\ell \geq 4$. Note that for every $e \in E(F)$ we have $|e \cap [\ell]| \leq 2$ because the cycle $(1,\dots,\ell,1)$ is induced. So the edge-set of this cycle is exactly $\{e \cap [\ell] : e \in E(F), |e \cap [\ell]| = 2\}$.
Moreover, by Lemma \ref{lem:cycle}, computing $\rem_2(G,\P,C_{\ell})$ is NP-hard. Thus, by Lemma \ref{lem:main} with $s=2$ and $L = C_{\ell}$, we get that computing $\rem_k(G,\P,F)$ is NP-hard.
	
From now on, we may assume that $\partial_2(F)$ has no induced cycle of length at least $4$; namely, $\partial_2 F$ is a chordal graph. It is well-known that chordal graphs are perfect. Also, $\chi(\partial_2(F)) \geq k+1$, because otherwise $F$ would be $k$-partite (indeed, a proper $k$-coloring of $\partial_2(F)$ corresponds to a partition $V(F) = V_1 \cup \dots \cup V_k$ such that no two vertices which are contained together in an edge of $F$ are in the same part, meaning that each edge intersects all of the parts). It follows that $\partial_2(F)$ contains a clique $X$ of size $k+1$. Let $Y \subseteq X$ be a minimum set with the property that $Y$ is not contained in any edge of $e$. Note that $Y$ is well-defined because $X$ is not contained in any edge (as edges have size $k$, whereas $|X| = k+1$). Also,
$|Y| \geq 3$ because every subset of $X$ of size $2$ is contained in an edge of $F$, as $X$ is a clique in $\partial_2(F)$.
Now, put $\ell := |Y|$ and $s := \ell-1 \leq k$, and let us assume, without loss of generality, that $Y = [\ell]$.
Then $|e \cap [\ell]| \leq s$ for every $e \in E(F)$ (since $Y$ is not contained in any edge of $F$) and $L := \{e \cap [\ell] : e \in E(F), |e \cap [\ell]| = s\}$ consists of all subsets of $Y$ of size $s$ (because every proper subset of $Y$ is contained in some edge, by the minimality of $Y$). Thus, $L$ is isomorphic to
$K_{s+1}^{(s)}$.
By Lemma \ref{lem:simplex}, computing $\rem_s(G,\P,K_{s+1}^{(s)})$ is NP-hard. Thus, by Lemma \ref{lem:main} with $L = K_{s+1}^{(s)}$, we get that computing $\rem_k(G,\P,F)$ is NP-hard. This completes the proof.
\end{proof}
\noindent

We are almost ready to prove Theorem \ref{thm:main}. The last thing we need is to introduce the notion of a core and some of its properties.
We say that a $k$-graph $L$ is a {\em core} of a $k$-graph $F$ if: $(i)$ $F$ is $L$-partite\footnote{In other words, there is a homomorphism from $F$ to $L$.}; $(ii)$ $L$ is a subgraph of $F$; and $(iii)$ $L$ has the smallest number of vertices among all $k$-graphs satisfying properties $(i)$ and $(ii)$. It is easy to see that all cores of a $k$-graph are isomorphic\footnote{Indeed, suppose that $L_1,L_2$ are both cores of $F$. Then $L_1$ is homomorphic to $L_2$ (by taking a homomorphism from $F$ to $L_2$ and restrincting it to $V(L_1)$) and similarly $L_2$ is homomorphic to $L_1$. Also, by the minimality of a core, both homomorphisms $\varphi : L_1 \rightarrow L_2$ and $\psi : L_2 \rightarrow L_1$ must be surjective. Indeed, if e.g.~$\varphi$ were not surjective, then by composing $\varphi$ with a homomorphism from $F$ to $L_1$, we would get a homomorphism from $F$ to a proper subgraph of $L_2$, a contradiction. So $|V(L_1)| = |V(L_2)|$ and $\varphi,\psi$ are in fact bijections. It follows that $L_1,L_2$ are isomorphic.}. Hence, we can talk about {\em the} core of $F$, which we denote by $\text{core}(F)$. Finally, we call a $k$-graph $L$ a core if it is the core of itself.
It is easy to see that if $L$ is a core then every homomorphism from $L$ to itself must in fact be an isomorphism\footnote{Indeed, the fact that $L$ is its own core implies that any homomorphism from $L$ to itself is bijective, which in turn implies that it is an isomorphism.}. We will need the following four simple facts regarding cores. We refer to the book \cite{HN_book} for a more thourough overview of these concepts.

\begin{lemma}\label{lem:core}
If $L$ is a core then for every $L$-partite graph $G$, every copy of $L$ in $G$ is canonical.
\end{lemma}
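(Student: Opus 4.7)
The plan is to exploit directly the defining property of a core: every homomorphism from $L$ to itself is forced to be an automorphism (this is stated in the footnote preceding the lemma). Since $G$ is $L$-partite, there is by definition a homomorphism $\phi : V(G) \to V(L)$, and the parts of the $L$-partition are $V_i = \phi^{-1}(i)$ for $i \in V(L)$.

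Now take any copy of $L$ in $G$, viewed as an injective edge-preserving map $\psi : V(L) \to V(G)$. The composition $\phi \circ \psi : V(L) \to V(L)$ sends edges to edges (composition of two homomorphisms), so it is an endomorphism of $L$. Because $L$ is a core, $\sigma := \phi \circ \psi$ is actually an automorphism of $L$. Pre-composing the copy with $\sigma^{-1}$ gives a new embedding $\psi' := \psi \circ \sigma^{-1}$, which has the same image in $G$ as $\psi$ and satisfies $\phi \circ \psi' = \mathrm{id}_{V(L)}$. Equivalently, $\psi'(i) \in V_i$ for every $i \in V(L)$, which is exactly the definition of a canonical copy. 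Thus the underlying subgraph of $G$ forming the copy of $L$ is realized as a canonical copy, as required.

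The only genuine subtlety is keeping the bookkeeping straight between a copy of $L$ as a subgraph of $G$ and a copy as a labeled embedding: when $L$ has nontrivial automorphisms, a given subgraph corresponds to several embeddings, and one has to select the right one (via $\sigma^{-1}$) to land inside the parts $V_i$ in the intended way. Once this is made explicit, the core property does all the work in a single line, and there is no further obstacle.
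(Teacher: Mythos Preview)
Your proof is correct and follows essentially the same approach as the paper's: compose the embedding with the $L$-partition homomorphism $\phi$ to get an endomorphism of $L$, invoke the core property to conclude it is an automorphism, and deduce that the copy is canonical. The paper's version is terser and leaves the relabeling via $\sigma^{-1}$ implicit, whereas you spell it out explicitly; this extra care is helpful but not a different argument.
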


\begin{proof}
Since $G$ is $L$-partite, there is a homomorphism $\phi: G \rightarrow L$. This homomorphism is also a homomorphism from
every copy of $L$ in $G$ to $L$. As every homomorphism from a core to itself
must be an isomorphism, we infer that every copy of $L$ in $G$ must be canonical.
\end{proof}

\begin{lemma}\label{lem:core blowup}
If $L$ is a core, and $G$ is $L$-partite and $L$-free, then every $G$-partite graph is also $L$-free.
\end{lemma}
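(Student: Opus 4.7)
The plan is to argue by contradiction, using the defining property of a core—that any homomorphism from $L$ to itself is an isomorphism (as noted in the footnote after the definition of core above)—to lift an unwanted copy of $L$ in the $G$-partite graph back down to a copy of $L$ in $G$.

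So let $H$ be an arbitrary $G$-partite $k$-graph, with a homomorphism $\psi \colon H \to G$, and let $\phi \colon G \to L$ be the homomorphism witnessing that $G$ is $L$-partite. Suppose for contradiction that $H$ contains a copy of $L$, and let $\iota \colon L \to H$ be the corresponding injective homomorphism (whose image is that copy). I would then consider the composition $\phi \circ \psi \circ \iota \colon L \to L$, which is a homomorphism from $L$ to itself.

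Now I would invoke the core hypothesis: since $L$ is a core, $\phi \circ \psi \circ \iota$ must be an isomorphism, and in particular a bijection on vertex sets. Because $\iota$ is injective to begin with, this forces $\psi \circ \iota \colon L \to G$ to be injective as well (otherwise the further composition with $\phi$ could not be injective). Since homomorphisms send edges to edges, the image of $\psi \circ \iota$ is then a subgraph of $G$ isomorphic to $L$, contradicting the assumption that $G$ is $L$-free.

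I do not anticipate a serious obstacle here: the only subtlety is the step where we conclude that $\psi \circ \iota$ is injective, and this is immediate from the fact that the further composition with $\phi$ is a bijection. The argument uses nothing about $G$ and $H$ beyond the homomorphisms witnessing $L$-partiteness and $G$-partiteness, so it applies to every $G$-partite $H$ simultaneously, as required.
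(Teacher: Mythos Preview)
Your proof is correct and is essentially identical to the paper's own argument: both compose the homomorphisms $H \to G \to L$, restrict to a putative copy of $L$ in $H$, invoke the core property to conclude the composite is an isomorphism, and deduce that the map $L \to G$ is injective and hence lands on a copy of $L$ in $G$. The only cosmetic difference is that the paper phrases things in terms of restricting $\phi' : G' \to G$ to the vertex set of the copy, while you phrase it via composition with an embedding $\iota : L \to H$.
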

\begin{proof}
If $G'$ is $G$-partite, then $G$ is also $L$-partite. Indeed, if $\phi'$ is a homomorphism from $G'$ to $G$ and
$\phi$ is a homomorphism from $G$ to $L$, then the composed mapping $\phi \circ \phi'$ is a homomorphism from $G'$ to $L$.
Now suppose by contradiction that $G'$ has a copy $X$ of $L$, and consider the restriction of $\phi \circ \phi'$ to the vertex-set of $X$.
As we noted above, since $L$ is a core, such a homomorphism from $L$ to itself must be an isomorphism. In particular, $(\phi \circ \phi')|_X$ must be injective, and so
$\phi'|_X$ is injective. But then $\phi'$ maps the copy $X$ of $L$ in $G'$ to a copy of $L$ in $G$ (as $\phi'$ is a homomorphism),
contradicting the assumption that $G$ is $L$-free.
\end{proof}

\noindent
We write $G \rightarrow F$ to mean that there is a homomorphism from $G$ to $F$. 
\begin{lemma}\label{lem:hom_equivalence}
	Let $F_1,F_2$ be $k$-graphs and let $L_i$ be the core of $F_i$. Then the following are equivalent:
	\begin{itemize}
		\item $F_1 \rightarrow F_2$ and $F_2 \rightarrow F_1$.
		\item $L_1 \cong L_2$. 
	\end{itemize} 
\end{lemma}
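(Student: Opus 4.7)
The plan is to prove the equivalence by chaining homomorphisms in both directions, using the two characteristic properties of cores recalled in the excerpt: a core $L$ satisfies $F \to L$ and $L \subseteq F$, and every homomorphism from a core to itself is an isomorphism.

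For the direction $L_1 \cong L_2 \Rightarrow F_1 \leftrightarrow F_2$, I would just compose maps. Since $L_i$ is a core of $F_i$, there is a homomorphism $F_1 \to L_1$, and since $L_2 \subseteq F_2$, the inclusion gives a homomorphism $L_2 \to F_2$. Combining with a fixed isomorphism $L_1 \to L_2$ yields $F_1 \to F_2$, and the symmetric chain gives $F_2 \to F_1$.

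For the harder direction $F_1 \leftrightarrow F_2 \Rightarrow L_1 \cong L_2$, I would first reduce to showing that $L_1$ and $L_2$ are homomorphically equivalent. Using $L_1 \subseteq F_1$, $F_1 \to F_2$, and $F_2 \to L_2$, composition yields $L_1 \to L_2$; the symmetric argument gives $L_2 \to L_1$. Let $\varphi : L_1 \to L_2$ and $\psi : L_2 \to L_1$ be such homomorphisms. Then $\psi \circ \varphi : L_1 \to L_1$ and $\varphi \circ \psi : L_2 \to L_2$ are homomorphisms from cores to themselves, hence are isomorphisms. Since $\psi \circ \varphi$ is a bijection on $V(L_1)$, the map $\varphi$ is injective; since $\varphi \circ \psi$ is a bijection on $V(L_2)$, the map $\varphi$ is surjective. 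Thus $\varphi$ is bijective.

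The main obstacle — and the step that needs the core property beyond mere bijectivity — is showing that $\varphi$ is an actual isomorphism of hypergraphs, i.e., that $\varphi^{-1}$ also preserves edges (a bijective homomorphism of hypergraphs need not be an isomorphism in general). To handle this, take any edge $e' \in E(L_2)$ and set $e = \varphi^{-1}(e') \subseteq V(L_1)$. I want to show $e \in E(L_1)$. Since $\psi$ is a homomorphism, $\psi(e') \in E(L_1)$, i.e., $(\psi \circ \varphi)(e) \in E(L_1)$. But $\psi \circ \varphi$ is an isomorphism of $L_1$, so its inverse is also a homomorphism, hence $e = (\psi \circ \varphi)^{-1}((\psi \circ \varphi)(e)) \in E(L_1)$. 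Therefore $\varphi$ is an isomorphism and $L_1 \cong L_2$, completing the argument.
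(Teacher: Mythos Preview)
Your proof is correct and follows essentially the same route as the paper: both directions are handled by composing the defining maps $F_i \to L_i$ and $L_i \hookrightarrow F_i$, and for the harder direction both arguments conclude that $\psi\circ\varphi$ and $\varphi\circ\psi$ are self-maps of cores and hence isomorphisms. The one place you go beyond the paper is in explicitly justifying why the bijective homomorphism $\varphi$ is an isomorphism of hypergraphs; the paper simply writes ``bijective and hence isomorphisms,'' leaving the reader to fill in either your argument or the edge-counting observation that $|E(L_1)|\le|E(L_2)|\le|E(L_1)|$ forces $\varphi$ to be a bijection on edges as well.
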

\begin{proof}
	If $F_1 \rightarrow F_2$ and $F_2 \rightarrow F_1$ then $L_1 \rightarrow L_2$ and $L_2 \rightarrow L_1$ (as $L_i$ is a subgraph of $F_i$ and $F_i \rightarrow L_i$). Let $\phi : L_1 \rightarrow L_2$ and $\psi : L_2 \rightarrow L_1$ be homomorphisms. Then $\psi \circ \phi$ is a homomorphism from $L_1$ to itself and $\phi \circ \psi$ is a homomorphism from $L_2$ to itself. As $L_1,L_2$ are cores, $\psi \circ \phi$ and $\phi\circ \psi$ must be bijective, which in turn implies that $\phi,\psi$ are bijective and hence isomorphisms. 
	
	Conversely, suppose that $L_1 \cong L_2 =: L$. Then $F_1,F_2 \rightarrow L$ and $L$ is a subgraph of $F_1,F_2$, hence $F_1 \rightarrow F_2$ and $F_2 \rightarrow F_1$.
\end{proof}


\begin{lemma}\label{corehomomor}
For every $k$-graph $F$, there are $C=C(F)$ and $\varepsilon=\varepsilon(F)>0$, so that if $L$ is the core\footnote{Actually, in this lemma
$L$ does not need to be the core of $F$. It is enough for $F$ to be $L$-partite.} of $F$,
and $\ell=|V(L)|$, then every $k$-graph on $n \geq C$ vertices containing at least $n^{\ell-\varepsilon}$ copies of $L$ contains
a copy of $F$.
\end{lemma}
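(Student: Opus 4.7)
The plan is to reduce the statement to a standard supersaturation (blow-up) result for $k$-graphs.

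\textbf{Step 1 --- embedding $F$ into a blow-up of $L$.} Since $L = \text{core}(F)$, there is a homomorphism $\phi : F \to L$. Put $t := |V(F)|$ and let $L[t]$ denote the $t$-blow-up of $L$, i.e.~the $k$-graph obtained by replacing each $v \in V(L)$ with a part $W_v$ of size $t$ and placing a complete $k$-partite $k$-graph on $W_{v_1}, \ldots, W_{v_k}$ whenever $\{v_1, \ldots, v_k\} \in E(L)$. For each $v \in V(L)$, pick an injection $\psi_v : \phi^{-1}(v) \hookrightarrow W_v$ (possible since $|\phi^{-1}(v)| \leq t$), and combine them into an injection $\psi : V(F) \to V(L[t])$. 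For any edge $\{u_1, \ldots, u_k\} \in E(F)$, the set $\{\phi(u_1), \ldots, \phi(u_k)\}$ is an edge of $L$ (so in particular it has $k$ distinct elements), and hence $\{\psi(u_1), \ldots, \psi(u_k)\}$ is an edge of $L[t]$. Thus $F \subseteq L[t]$, and it suffices to prove that $G$ contains a copy of $L[t]$.

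\textbf{Step 2 --- many $L$-copies force a blow-up.} I would invoke the following standard blow-up lemma: for every $k$-graph $L$ on $\ell$ vertices and every integer $t \geq 1$, there exist $\varepsilon = \varepsilon(L,t) > 0$ and $C = C(L,t)$ such that every $k$-graph on $n \geq C$ vertices containing at least $n^{\ell - \varepsilon}$ copies of $L$ contains $L[t]$ as a subgraph. Applying this with $t := |V(F)|$ yields the constants $\varepsilon(F), C(F)$ required by the lemma: by Step 1, $L[t] \supseteq F$, so the copy of $L[t]$ found inside $G$ contains the desired copy of $F$.

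\textbf{Proof of the blow-up lemma and main obstacle.} The blow-up lemma is proved by a standard argument. After a uniformly random $\ell$-partition $V(G) = V_1 \cup \cdots \cup V_\ell$, a positive fraction of the $L$-copies become canonical, so one may assume that $G$ is $\ell$-partite with at least $\Omega(n^{\ell - \varepsilon})$ canonical $L$-copies. One then iteratively chooses $t$-subsets $T_v \subseteq V_v$ for $v \in V(L)$ so that the number of canonical $L$-copies extending the $T_v$'s chosen so far stays large; at each step, Jensen's inequality (applied to the count of extensions as a function of the current partial selection) produces the next $t$-subset. The base case $\ell = k$ is exactly the Erd\H{o}s theorem $\ex(n, K^{(k)}_{t,\ldots,t}) = O(n^{k - c_t})$ with $c_t > 0$, and the inductive step reduces $\ell$ to $\ell - 1$ by passing to the common extensions of a well-chosen $T_v$. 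The main technical obstacle --- ensuring that the $t$-subsets $T_v$ combine into a genuine blow-up of $L$ (all edges of $L[t]$ present simultaneously), rather than just independent blow-ups of its edges --- is precisely what the iterative convexity argument accomplishes, and the polynomial savings in Erd\H{o}s's theorem is the quantitative engine driving the induction.
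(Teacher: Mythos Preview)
Your proposal is correct and follows the same overall strategy as the paper: embed $F$ into a blow-up $L[t]$ (your Step~1 is exactly the paper's observation that ``since $f=|V(F)|$, this gives a copy of $F$ in $G$''), then show that many copies of $L$ force such a blow-up, beginning with a random $\ell$-partition to make a positive fraction of the $L$-copies canonical.

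The one presentational difference worth noting is in how the blow-up is extracted after the random partition. You sketch an inductive argument on $\ell$, choosing the sets $T_v$ one at a time via Jensen and reducing to Erd\H{o}s's theorem at $\ell=k$. The paper instead packages all of this into a single step: it defines an auxiliary $\ell$-uniform hypergraph on $V(G)$ whose edges are precisely the canonical $L$-copies, and then applies Erd\H{o}s's theorem for $\ell$-graphs once to find a copy of $K^{(\ell)}_{f,\ldots,f}$ in this auxiliary hypergraph. Such a copy is exactly the desired blow-up $L[f]$ in $G$. This auxiliary-hypergraph trick is a clean shortcut that avoids rederiving the iterative convexity argument --- your induction is essentially reproving Erd\H{o}s's theorem inside the argument, whereas the paper simply cites it.
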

\begin{proof}
Set $f=|V(F)|$, and suppose $G$ is an $n$-vertex $k$-graph on at least $C$ vertices with $m=n^{\ell-\varepsilon}$ copies of $L$ with $\varepsilon$ and $C$ to be chosen below. Consider a random partition of $V(G)$ into
$\ell$ vertex sets $V_1,\ldots,V_{\ell}$ where each vertex is assigned randomly and independently to one of the sets. Suppose
$(v_1,\ldots,v_{\ell})$ is an $\ell$-tuple of vertices in $G$ which spans a copy of $L$ where $v_i$ plays the role of vertex
$i$ in $L$. Then with probability $\ell^{-\ell}$, vertex $v_i$ is placed in $V_i$ for each $1 \leq i \leq \ell$.
Hence, the expected number of such $\ell$-tuples is $m/\ell^{\ell}$. Fix a partition $V_1,\ldots,V_{\ell}$ with at least this many such $\ell$-tuples, and define an $\ell$-graph on the same set of vertices, by putting an edge on $v_1 \in V_1,\ldots,v_{\ell} \in V_{\ell}$ if they span a copy of $L$ as above.
Then we have an $n$-vertex $\ell$-graph with at least $n^{\ell-\varepsilon}/\ell^{\ell}$ edges. It is well known \cite{Erdos} that there is $\varepsilon'=\varepsilon'(\ell,f)$ so that every $n$-vertex $\ell$-graph with at least $n^{\ell-\varepsilon'}$ edges has a copy of the complete $\ell$-partite $\ell$-graph with each vertex
part of size $f$. It is easy to see that since $f=|V(F)|$, this gives a copy of $F$ in $G$.
Hence, we can pick $\varepsilon=\varepsilon'/2$ and $C$ large enough so that $n^{\ell-\varepsilon'/2}/\ell^{\ell} \geq n^{\ell-\varepsilon'}$ for all $n \geq C$.
\end{proof}

In the proof of Theorem \ref{thm:main} we will use the following notion:
For a $k$-graph $G$, the {\em $b$-blowup} of $G$ is the graph $G'$ obtained by replacing every vertex $v \in V(G)$ with a set $X_v$ of $b$ vertices (such that the sets $(X_v)_{v \in V(G)}$ are pairwise-disjoint), and adding for every $e \in E(G)$ a complete $k$-partite $k$-graph between the sets $(X_v)_{v \in e}$. Observe that a $b$-blowup of $G$ is $G$-partite.

\begin{proof}[Proof of Theorem \ref{thm:main}]
Let $\mathcal{F}$ be a finite family of non-$k$-partite $k$-graphs. As $\mathcal{F}$ is finite, there exists $F \in \mathcal{F}$ which is minimal in the homomorphism relation, namely, such that for every $F' \in \mathcal{F}$, if $F' \rightarrow F$ then $F \rightarrow F'$. Let $L$ be the core of $F$. By Lemma \ref{lem:hom_equivalence}, for every $F' \in \mathcal{F}$ it holds that either $\text{core}(F') = L$ or $F'$ is not homomorphic to $F$. 	
	
Put $\ell = |V(L)|$, and note that $L$ is also not $k$-partite. Fix $\delta > 0$.
We reduce the problem of computing $\rem_k(G,\P,L)$ exactly on $n$-vertex inputs to the problem of approximating $\ex_{\mathcal{F}}(G')$ up to an additive error of $N^{k-\delta}$ on $N$-vertex inputs $G'$, where $N=\poly(n)$ will be chosen later.
The former problem is NP-hard by Lemma \ref{lem:core hardness}, so this would prove Theorem \ref{thm:main}.
Let $G$ be an $n$-vertex $L$-partite $k$-graph with $L$-partition $\P$.
By Lemma \ref{lem:core}, every copy of $L$ in $G$ is canonical.
Hence, together with \eqref{eqremex}, we get
\begin{equation}\label{eqfinal1}
\rem_k(G,\P,L) = \rem_L(G) = e(G) - \ex_L(G)\;.
\end{equation}
By \eqref{eqfinal1}, computing $\rem_k(G,\P,L)$ is equivalent to computing $\ex_L(G)$.
We will now reduce the task of computing $\ex_L(G)$ to that of approximating $\ex_{\mathcal{F}}(G')$ up to an additive error of $N^{k-\delta}$ for $N$-vertex inputs $G'$. This would establish our reduction.

Since $F$ is $L$-partite (by definition of a core), Lemma \ref{corehomomor} gives us $C$ and $\varepsilon > 0$ such that for every $N\geq C$, every $N$-vertex $k$-graph containing $N^{\ell-\varepsilon}$ copies of $L$ must contain a copy of $F$.
	Let $G'$ be the $b$-blowup of $G$, where
	$$
	b = \max\left\{ 4n^{\frac{k}{\delta}}, 4n^{\frac{\ell}{\varepsilon}}, C \right\}.
	$$
	So $b = \poly(n)$.
	Let $X_1,\dots,X_n$ be the parts of the blowup corresponding to the vertices $1,\dots,n$ of $G$. So $(X_{i_1},\dots,X_{i_k})$ is a complete $k$-partite $k$-graph whenever $\{i_1,\dots,i_k\} \in E(G)$, and these are all the edges of $G'$.
	Also, $|X_i| = b$ for every $1 \leq i \leq n$.
	Put $N := |V(G')| = bn$, and note that $N = \poly(n)$ because $b = \poly(n)$. We claim that
	\begin{equation}\label{eq:approx}
	b^k \cdot \ex_L(G) \leq \ex_{\mathcal{F}}(G') \leq b^k \cdot \ex_L(G) + \frac{N^{\ell-\varepsilon}}{b^{\ell-k}}.
	\end{equation}
	For the left inequality, let $H$ be an $L$-free subgraph of $G$ with $\ex_L(G)$ edges. Let $H'$ be the $b$-blowup of $H$. Then $H'$ is a subgraph of $G'$ with $b^k \cdot \ex_L(G)$ edges. Since $H$ is $L$-partite (on account of being a subgraph of $G$ which is assumed to be $L$-partite),
	and since $H'$ is $H$-partite (on account of being a blowup of $H$), 
	we get that $H'$ is $L$-partite (by composing homomorphisms), and we infer from Lemma \ref{lem:core blowup} that
	$H'$ is $L$-free. We claim that $H'$ is $\mathcal{F}$-free. So fix any $F' \in \mathcal{F}$. If there is no homomorphism from $F'$ to $F$ then there is also no homomorphism from $F'$ to $L$ (as $L$ is a subgraph of $F$). As $H'$ is $L$-partite, it follows that $H'$ has no copies of $F'$. And if $F' \rightarrow F$, then by the choice of $F$ we have $\text{core}(F') = L$, so in particular $L$ is a subgraph of $F'$. As $H'$ is $L$-free, it is also $F'$-free. This proves that $H'$ is $\mathcal{F}$-free, as claimed. 
	It follows that $\ex_{\mathcal{F}}(G') \geq e(H') = b^k \cdot \ex_L(G)$, proving the left inequality in~\eqref{eq:approx}.
	
	To prove the right inequality in \eqref{eq:approx}, let $H'$ be a subgraph of $G'$ with $e(H') \geq b^k \cdot \ex_L(G) + \frac{N^{\ell-\varepsilon}}{b^{\ell-k}}$. Our goal is to show that $H'$ is not $\mathcal{F}$-free. To this end, it suffices to show that $H'$ contains a copy of $F$. By the choice of $C$ and $\varepsilon$ via Lemma \ref{corehomomor}, it suffices to show that $H'$ has at least $N^{\ell-\varepsilon}$ copies of $L$. To this end, for each $1 \leq i \leq n$, let $x_i \in X_i$ be a vertex chosen uniformly at random, and let $H$ be the subgraph of $H'$ induced by $\{x_1,\dots,x_n\}$. Then $H$ is a subgraph of $G$.
	Let $\N_L(H)$ denote the number of copies of $L$ in $H$.
	We have
	$$
	\mathbb{E}[e(H)] = \frac{e(H')}{b^k}
	$$
	and
	$$
	\mathbb{E}[\N_L(H)] = \frac{\N_L(H')}{b^{\ell}},
	$$
	as $\ell = |V(L)|$. By linearity of expectation, there is an outcome for $H$ satisfying
	\begin{equation}\label{eq:H}
	e(H) - \N_L(H) \geq \frac{e(H')}{b^k} - \frac{\N_L(H')}{b^{\ell}} \geq
	\ex_L(G) + \frac{N^{\ell-\varepsilon}}{b^{\ell}} - \frac{\N_L(H')}{b^{\ell}}.
	\end{equation}
	Note that since $H$ is a subgraph of $G$, we can obtain an $L$-free subgraph of $G$ by deleting one edge from every copy of $L$ in $H$. This deletes at most $\N_L(H)$ edges, and thus gives an $L$-free subgraph of $G$ with at least $e(H) - \N_L(H)$ edges. Thus, $\ex_L(G) \geq e(H) - \N_L(H)$. Combining this with \eqref{eq:H} and rearranging, we get
	$
	\N_L(H') \geq N^{\ell-\varepsilon},
	$
	as required.
	
	Having proved \eqref{eq:approx}, we can now complete the proof of the theorem. Suppose that there is an algorithm which approximates $\ex_{\mathcal{F}}(G')$ up to an additive error of $N^{k-\delta}$. Let $X$ be the number outputted by the algorithm, so
	$|\ex_{\mathcal{F}}(G') - X| \leq N^{k-\delta}$. Combined with \eqref{eq:approx}, we have
$$
\left| b^k\cdot \ex_L(G) - X \right| \leq N^{k-\delta} + \frac{N^{\ell-\varepsilon}}{b^{\ell-k}}\;.
$$
Dividing both sides by $b^k$, we see that
	$$
	\left| \ex_L(G) - \frac{X}{b^k} \right| \leq
	\frac{N^{k-\delta}}{b^k} + \frac{N^{\ell-\varepsilon}}{b^{\ell}} =
	\frac{n^{k-\delta}}{b^{\delta}} + \frac{n^{\ell-\varepsilon}}{b^{\varepsilon}} < \frac{1}{4} + \frac{1}{4} = \frac{1}{2},
	$$
	where the equality uses that $N = bn$, and the last inequality uses our choice of $b$. Thus, the above algorithm allows us to approximate $\ex_L(G)$ up to an additive error of less than $\frac{1}{2}$. As $\ex_L(G)$ is an integer, this allows us to compute $\ex_L(G)$ exactly. This completes the proof.
\end{proof}

\section{Proof of Lemma \ref{lem:triangle}}\label{sec:triangle reduction}
	We will use the two gadgets $\J_1$ and $\J_2$ depicted in Figure \ref{fig:gadgets}. Each of the gadgets has triangles $T_1,T_2,T_3$ and edges $e_1,e_2,e_3$, the latter called the {\em primary edges}. An edge is called {\em internal} if it does not belong to $T_1,T_2,T_3$.
	The gadgets have the following properties:
	\begin{figure}[h]
		\centering
		\begin{tikzpicture}[scale = 2.25]
		\coordinate (a1) at (0,0);
		\coordinate (a2) at (1,0);
		\coordinate (a3) at (0.5,0.866);
		\coordinate (a4) at (0.5,-0.5);
		\coordinate (a5) at ($0.5*(a2) + 0.5*(a3) + ({0.5*cos(30)},{0.5*sin(30)})$);
		\coordinate (a6) at ($0.5*(a1) + 0.5*(a3) + ({0.5*cos(150)},{0.5*sin(150)})$);
		\coordinate (a7) at ($0.5*(a3) + 0.5*(a5) + ({0.5*cos(75)},{0.5*sin(75)})$);
		\coordinate (a8) at ($0.5*(a3) + 0.5*(a6) + ({0.5*cos(105)},{0.5*sin(105)})$);
		\coordinate (a10) at ($0.5*(a5) + 0.5*(a7) + ({0.5*cos(15)},{0.5*sin(15)})$);
		\coordinate (a11) at ($0.5*(a6) + 0.5*(a8) + ({0.5*cos(165)},{0.5*sin(165)})$);
		
		\foreach \i in {1,2,3,4,5,6,7,8,10,11}
		{
			\draw (a\i) node[fill=black,circle,minimum size=2pt,inner sep=0pt] {};
		}
		
		\draw (a1) -- (a2) -- (a3) -- (a1);
		\draw (a2) -- (a5) -- (a3);
		\draw (a2) -- (a4) -- (a1);
		\draw (a1) -- (a6) -- (a3);
		\draw (a5) -- (a7) -- (a3);
		\draw (a3) -- (a8) -- (a6);
		\draw (a5) -- (a10) -- (a7);
		\draw (a6) -- (a11) -- (a8);	
		
		\filldraw[pattern=north east lines, pattern color=blue, draw opacity=0.4] (a1) -- (a2) -- (a3) -- cycle;
		\filldraw[pattern=north east lines, pattern color=blue, draw opacity=0.4] (a5) -- (a7) -- (a3) -- cycle;
		\filldraw[pattern=north east lines, pattern color=blue, draw opacity=0.4] (a3) -- (a8) -- (a6) -- cycle;
		\filldraw[pattern=north west lines, pattern color=red, draw opacity=0.4] (a2) -- (a5) -- (a3) -- cycle;
		\filldraw[pattern=north west lines, pattern color=red, draw opacity=0.4] (a1) -- (a6) -- (a3) -- cycle;
		
		\draw (a1) node[left] {\scriptsize $1$};
		\draw (a2) node[right] {\scriptsize $2$};
		\draw (a3) node[above] {\scriptsize $3$};
		\draw (a4) node[left] {\scriptsize $3$};
		\draw (a5) node[right] {\scriptsize $1$};
		\draw (a6) node[left] {\scriptsize $2$};
		\draw (a7) node[left] {\scriptsize $2$};
		\draw (a8) node[right] {\scriptsize $1$};
		\draw (a10) node[above] {\scriptsize $3$};
		\draw (a11) node[above] {\scriptsize $3$};
		
		\draw ($0.5*(a5) + 0.5*(a7) + ({0.08*cos(15+180)},{0.08*sin(15+180)})$) node {\scriptsize $e_1$};
		\draw ($0.5*(a6) + 0.5*(a8) + ({0.08*cos(165+180)},{0.08*sin(165+180)})$) node {\scriptsize $e_2$};
		\draw ($0.5*(a1) + 0.5*(a2) + ({0.05*cos(90)},{0.05*sin(90)})$) node {\scriptsize $e_3$};
		\draw ($0.33*(a5) + 0.33*(a7) + 0.33*(a10)$) node {\footnotesize $T_1$};
		\draw ($0.33*(a6) + 0.33*(a8) + 0.33*(a11)$) node {\footnotesize $T_2$};
		\draw ($0.3*(a1) + 0.3*(a2) + 0.4*(a4)$) node {\footnotesize $T_3$};
		\end{tikzpicture}
		\hspace{2.5cm}
		\begin{tikzpicture}[scale = 2.25]
		\coordinate (a1) at (0,0);
		\coordinate (a2) at (1,0);
		\coordinate (a3) at (0.5,0.866);
		\coordinate (a4) at (0.5,-0.5);
		\coordinate (a5) at ($0.5*(a2) + 0.5*(a3) + ({0.5*cos(30)},{0.5*sin(30)})$);
		\coordinate (a6) at ($0.5*(a1) + 0.5*(a3) + ({0.5*cos(150)},{0.5*sin(150)})$);
		\coordinate (a7) at ($0.5*(a3) + 0.5*(a5) + ({0.5*cos(75)},{0.5*sin(75)})$);
		\coordinate (a8) at ($0.5*(a3) + 0.5*(a6) + ({0.5*cos(105)},{0.5*sin(105)})$);
		\coordinate (a9) at ($0.5*(a2) + 0.5*(a4) + ({0.5*cos(-45)},{0.5*sin(-45)})$);
		\coordinate (a10) at ($0.5*(a5) + 0.5*(a7) + ({0.5*cos(15)},{0.5*sin(15)})$);
		\coordinate (a11) at ($0.5*(a6) + 0.5*(a8) + ({0.5*cos(165)},{0.5*sin(165)})$);
		\coordinate (a12) at ($0.5*(a4) + 0.5*(a9) + ({0.5*cos(-105)},{0.5*sin(-105)})$);
		
		\foreach \i in {1,2,3,4,5,6,7,8,9,10,11,12}
		{
			\draw (a\i) node[fill=black,circle,minimum size=2pt,inner sep=0pt] {};
		}
		
		\draw (a1) -- (a2) -- (a3) -- (a1);
		\draw (a2) -- (a5) -- (a3);
		\draw (a2) -- (a4) -- (a1);
		\draw (a1) -- (a6) -- (a3);
		\draw (a5) -- (a7) -- (a3);
		\draw (a3) -- (a8) -- (a6);
		\draw (a2) -- (a9) -- (a4);
		\draw (a5) -- (a10) -- (a7);
		\draw (a6) -- (a11) -- (a8);
		\draw (a4) -- (a12) -- (a9);
		
		\filldraw[pattern=north east lines, pattern color=blue, draw opacity=0.4] (a1) -- (a2) -- (a3) -- cycle;
		\filldraw[pattern=north east lines, pattern color=blue, draw opacity=0.4] (a5) -- (a7) -- (a3) -- cycle;
		\filldraw[pattern=north east lines, pattern color=blue, draw opacity=0.4] (a3) -- (a8) -- (a6) -- cycle;
		\filldraw[pattern=north east lines, pattern color=blue, draw opacity=0.4] (a2) -- (a9) -- (a4) -- cycle;
		
		\filldraw[pattern=north west lines, pattern color=red, draw opacity=0.4] (a2) -- (a5) -- (a3) -- cycle;
		\filldraw[pattern=north west lines, pattern color=red, draw opacity=0.4] (a2) -- (a4) -- (a1) -- cycle;
		\filldraw[pattern=north west lines, pattern color=red, draw opacity=0.4] (a1) -- (a6) -- (a3) -- cycle;
		
		\draw (a1) node[left] {\scriptsize $1$};
		\draw (a2) node[right] {\scriptsize $2$};
		\draw (a3) node[above] {\scriptsize $3$};
		\draw (a4) node[left] {\scriptsize $3$};
		\draw (a5) node[right] {\scriptsize $1$};
		\draw (a6) node[left] {\scriptsize $2$};
		\draw (a7) node[left] {\scriptsize $2$};
		\draw (a8) node[right] {\scriptsize $1$};
		\draw (a9) node[right] {\scriptsize $1$};
		\draw (a10) node[above] {\scriptsize $3$};
		\draw (a11) node[above] {\scriptsize $3$};
		\draw (a12) node[left] {\scriptsize $2$};
		
		\draw ($0.5*(a5) + 0.5*(a7) + ({0.08*cos(15+180)},{0.08*sin(15+180)})$) node {\scriptsize $e_1$};
		\draw ($0.5*(a6) + 0.5*(a8) + ({0.08*cos(165+180)},{0.08*sin(165+180)})$) node {\scriptsize $e_2$};
		\draw ($0.5*(a4) + 0.5*(a9) + ({0.06*cos(-105+180)},{0.06*sin(-105+180)})$) node {\scriptsize $e_3$};
		\draw ($0.33*(a5) + 0.33*(a7) + 0.33*(a10)$) node {\footnotesize $T_1$};
		\draw ($0.33*(a6) + 0.33*(a8) + 0.33*(a11)$) node {\footnotesize $T_2$};
		\draw ($0.33*(a4) + 0.33*(a9) + 0.33*(a12)$) node {\footnotesize $T_3$};
		\end{tikzpicture}
		
		\caption{The two gadgets $\J_1$ (left) and $\J_2$ (right) used in the proof of Lemma \ref{lem:triangle}. The gadget $\J_1$ corresponds to the clause $(x_1 \vee x_2 \vee x_3)$ and the gadget $\J_2$ corresponds to the clause $(x_1 \vee x_2 \vee \overline{x_3})$.}\label{fig:gadgets}
	\end{figure}
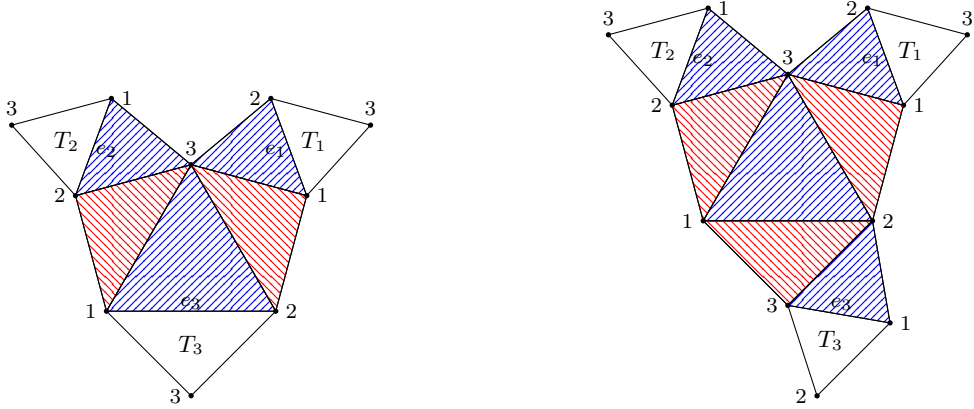

	\begin{claim}\label{claim:gadget correctness}
		Set $b_1 := 2$ and $b_2 := 3$. The following holds for $i=1,2$.
		\begin{enumerate}
			\item Let $E$ be a set of edges of $\J_i$ such that $\J_i - E$ is triangle-free. Then $E$ contains at least
			$b_i$ internal edges. Moreover, if $e_1,e_2,e_3 \notin E$, then $E$ contains at least $b_i+1$ internal edges.
			\item On the other hand, for every choice of edges $\hat{e_j} \in E(T_j)$ for $j=1,2,3$, such that $\hat{e_j} = e_j$ for at least one $j$,
			there exists a set $E \subseteq E(\J_i)$ such that $\J_i - E$ is triangle-free, and $E$ consists of $\hat{e_1},\hat{e_2},\hat{e_3}$ and exactly $b_i$ internal edges of $\J_i$.
		\end{enumerate}
	\end{claim}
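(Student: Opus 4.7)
The plan is to verify both assertions for each gadget separately via (i) enumeration of the internal triangles, (ii) a pairwise-edge-disjoint-triangle argument for the Part~1 lower bounds, and (iii) explicit case-based constructions for Part~2.

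\textbf{Enumeration.} The internal triangles of $\J_1$ (those not equal to any $T_j$) are the central triangle $a_1a_2a_3$ and the four ``petal'' triangles at $a_3$: $a_2a_5a_3,\, a_1a_6a_3,\, a_5a_7a_3,\, a_3a_8a_6$. In $\J_2$ one further has $a_1a_2a_4$ and $a_2a_9a_4$ on the extended $T_3$-branch. In both gadgets, each primary edge $e_j$ lies in exactly one internal triangle, and no other edges are shared between internal triangles and $T_1\cup T_2\cup T_3$.

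\textbf{Lower bound (Part~1).} The crucial observation is: if $\J_i$ contains $t$ pairwise edge-disjoint internal triangles whose sets of internal edges are pairwise disjoint, then any $E$ with $\J_i - E$ triangle-free uses at least $t$ internal edges. For the general $b_i$-bound, take $\{a_2a_5a_3,\, a_1a_6a_3\}$ in $\J_1$ (these meet only at $a_3$ and all their edges are internal), yielding $b_1 = 2$; for $\J_2$ add $a_1a_2a_4$, whose edges are also all internal and which is edge-disjoint from the previous two, to obtain $b_2 = 3$. For the stronger $(b_i+1)$-bound when $e_1, e_2, e_3 \notin E$, use $\{a_1a_2a_3,\, a_5a_7a_3,\, a_3a_8a_6\}$ in $\J_1$: these are pairwise edge-disjoint (meeting only at $a_3$), and once the three primaries are forbidden each retains exactly two internal edges, with all six distinct; this gives $b_1+1 = 3$. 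For $\J_2$ append $a_2a_9a_4$, whose remaining internal edges $\{a_2a_9, a_2a_4\}$ are disjoint from the previous six, giving $b_2+1 = 4$.

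\textbf{Construction (Part~2) and verification.} Given $\hat{e_1}, \hat{e_2}, \hat{e_3}$ with at least one $\hat{e_{j_0}} = e_{j_0}$, removing $\{\hat{e_1}, \hat{e_2}, \hat{e_3}\}$ destroys $T_1, T_2, T_3$ together with every internal triangle containing a primary $e_j$ with $\hat{e_j} = e_j$. The remaining internal triangles can be covered using $b_i$ edges chosen from the ``$a_3$-fan'' $\{a_3a_1,\, a_3a_2,\, a_3a_5,\, a_3a_6\}$---each of which covers two petal triangles---supplemented in $\J_2$ by $a_1a_2$ or $a_2a_4$ (each covering two of the three triangles $a_1a_2a_3,\, a_1a_2a_4,\, a_2a_9a_4$). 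The main obstacle is the bookkeeping: one must verify in each of the $2^3 - 1 = 7$ non-empty choices of $\{j : \hat{e_j} = e_j\}$ that some choice of $b_i$ covering edges works. This is however routine, using the symmetries of the gadgets (e.g.\ swapping $(e_1, e_2)$) and the short list of two-triangle covering edges just identified; no further idea beyond the lower-bound argument is needed.
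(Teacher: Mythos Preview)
Your proof is correct and follows essentially the same approach as the paper's: you identify the same two families of pairwise edge-disjoint triangles (the paper's ``red'' and ``blue'' triangles) for the Part~1 lower bounds, and carry out the explicit case check that the paper defers to ``can be verified by hand'' for Part~2. One minor simplification: for Part~2 you only need to handle the three singleton cases $\{j : \hat e_j = e_j\} = \{j_0\}$ rather than all seven, since any larger set only destroys additional internal triangles.
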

	\begin{proof}
		Let $i = 1,2$. Figure \ref{fig:gadgets} shows that $\J_i$ has $b_i$ edge-disjoint triangles (colored red) consisting only of internal edges. Thus, one has to delete at least $b_i$ edges to destroy all triangles in $\J_i$. Also, Figure \ref{fig:gadgets} shows that $\J_i$ has $b_i+1$ edge-disjoint triangles (colored blue) which consist only of internal edges and $e_1,e_2,e_3$. Thus, if we do not delete $e_1,e_2,e_3$, then we must delete at least $b_i+1$ internal edges to destroy all triangles in $\J_i$. This proves Item 1. Item 2 can be verified by hand; namely, it can be verified that for every given $1 \leq j \leq 3$, there is a choice of one edge from each red triangle such that the chosen $b_i$ edges together with $e_j$ cover all triangles which are not $T_1,T_2,T_3$. This implies Item 2, because the triangles $T_1,T_2,T_3$ are destroyed by deleting the edges $\hat{e_1},\hat{e_2},\hat{e_3}$.
	\end{proof}
	
	\begin{claim}\label{claim:gadget 3-coloring}
		Let $a_1,a_2,a_3 \in \binom{[3]}{2}$.
		\begin{enumerate}
			\item If $a_1=a_2=a_3$, then there is a proper 3-coloring $f : V(\J_1) \rightarrow [3]$ such that\footnote{Here we use the notation $f(e) := \{f(x) : x \in e\}$.} $f(e_i) = a_i$ for $i=1,2,3$.
			\item If $a_1=a_2$ and $a_3 \neq a_1,a_2$, then there is a proper $3$-coloring $f : V(\J_2) \rightarrow [3]$ such that $f(e_i) = a_i$ for $i=1,2,3$.
		\end{enumerate}
	\end{claim}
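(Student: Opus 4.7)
The plan is to first observe that the integer labels $1,2,3$ attached to the vertices in Figure \ref{fig:gadgets} already encode a proper $3$-coloring of each of $\J_1$ and $\J_2$; this can be verified by a direct inspection, since along every drawn edge the two endpoint labels differ. Call this coloring $f_0$ (on either gadget). Reading off the endpoint labels on the three primary edges then shows: in $\J_1$, one has $f_0(e_1)=f_0(e_2)=f_0(e_3)=\{1,2\}$; and in $\J_2$, one has $f_0(e_1)=f_0(e_2)=\{1,2\}$ and $f_0(e_3)=\{1,3\}$.

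Having established this canonical coloring, the rest of the argument is simply to permute the three colors to hit the prescribed pairs $a_1,a_2,a_3$. For Part 1, if $a_1=a_2=a_3=\{i,j\}$ for some $\{i,j\}\in\binom{[3]}{2}$, choose any permutation $\pi$ of $[3]$ with $\pi(\{1,2\})=\{i,j\}$; then $f:=\pi\circ f_0$ is again a proper $3$-coloring of $\J_1$ and satisfies $f(e_r)=\pi(\{1,2\})=a_r$ for $r=1,2,3$.

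For Part 2, we are given $a_1=a_2\in\binom{[3]}{2}$ and $a_3\in\binom{[3]}{2}$ with $a_3\neq a_1$. Since any two distinct elements of $\binom{[3]}{2}$ intersect in exactly one vertex, both $\{1,2\}\cap\{1,3\}=\{1\}$ and $a_1\cap a_3=\{m\}$ for a unique $m$. Define the permutation $\pi$ of $[3]$ by $\pi(1)=m$, $\pi(2)=$ the unique element of $a_1\setminus\{m\}$, and $\pi(3)=$ the unique element of $a_3\setminus\{m\}$. Then $\pi(\{1,2\})=a_1=a_2$ and $\pi(\{1,3\})=a_3$, so $f:=\pi\circ f_0$ is a proper $3$-coloring of $\J_2$ realizing the required edge-color pairs.

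The whole argument is essentially a verification plus a color-permutation trick, so no real obstacle is expected; the only mildly delicate point is checking by hand that the labelings drawn in Figure \ref{fig:gadgets} are indeed proper colorings of $\J_1$ and $\J_2$ and that the primary edges receive the claimed color pairs, which is a finite case-check over the edges listed in the TikZ description of each gadget.
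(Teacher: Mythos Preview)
Your proposal is correct and follows essentially the same approach as the paper's own proof: both read off the proper $3$-coloring from the vertex labels in Figure~\ref{fig:gadgets}, verify that the primary edges receive the pairs $\{1,2\},\{1,2\},\{1,2\}$ in $\J_1$ and $\{1,2\},\{1,2\},\{1,3\}$ in $\J_2$, and then permute the colors to realize arbitrary $a_1,a_2,a_3$ of the required form. Your write-up simply spells out the permutation for Part~2 more explicitly than the paper does.
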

	\begin{proof}
		Such 3-colorings are depicted in Figure \ref{fig:gadgets}. For Item 1, Figure \ref{fig:gadgets} shows a 3-coloring $f$ of $\J_1$ with $f(e_i) = \{1,2\}$ for $i=1,2,3$. By changing the names of the colors, we can get a coloring with $f(e_i) = a$ ($i=1,2,3$) for any $a \in \binom{[3]}{2}$. Similarly, for Item 2, Figure \ref{fig:gadgets} shows a 3-coloring $f$ of $\J_2$ with $f(e_1) = f(e_2) = \{1,2\}$ and $f(e_3) = \{1,3\}$. Changing the names of the colors proves Item 2.
	\end{proof}

We now use the above two claims to prove Lemma \ref{lem:triangle}. We reduce the problem 3-CNF SAT (which is well-known to be NP-hard) to the problem of computing $\rem_2(G,\P,K_3)$. Let $\Phi$ be a 3-CNF formula with variables $x_1,\dots,x_n$ and clauses $C_1,\dots,C_m$. So each clause $C_{\ell}$ is of the form $v_i \vee v_j \vee v_k$ for some $1 \leq i < j < k \leq n$, where $v_i \in \{x_i,\overline{x_i}\}$ and similarly for $v_j,v_k$.
	We construct a 3-partite graph $G$ as follows. First, take $n$ vertex-disjoint triangles $T_1,\dots,T_n$ and color their vertices properly with $1,2,3$. Triangle $T_i$ corresponds to variable $x_i$. Later on, if we shall choose to delete the edge of $T_i$ colored $\{1,2\}$ (when turning $G$ into a triangle-free graph), then this will correspond to assigning $x_i$ to be true, and if we shall choose to delete the edge colored $\{1,3\}$, then this will correspond to assigning $x_i$ to be false.
	
	Next, let $1 \leq \ell \leq m$, and let $x_i,x_j,x_k$ be the variables appearing in $C_{\ell}$.
	We say that $C_{\ell}$ is of type 1 if the three variables $x_i,x_j,x_k$ all appear without negation or all appear negated. Else, namely, if two of the variables $x_i,x_j,x_k$ have the same sign and the third one has the opposite sign, then we say that $C_{\ell}$ is of type 2.
	Set $a_i := \{1,2\}$ if $x_i$ appears in $C_{\ell}$ (without negation), and set $a_i := \{1,3\}$ if $\overline{x_i}$ appears in $C_{\ell}$. Define $a_j,a_k \in \{\{1,2\},\{1,3\}\}$ analogously (with respect to $x_j,x_k$, respectively).
If $C_{\ell}$ is of type 1 (so that $a_i = a_j = a_k$), then add a copy $J_{\ell}$ of $\J_1$ in which $T_i,T_j,T_k$ play the roles of $T_1,T_2,T_3$, respectively; the primary edges are the edges in $T_i,T_j,T_k$ of colors $a_i,a_j,a_k$, respectively; and all other vertices are new.
For example, the left part of Figure \ref{fig:gadgets} depicts the copy $J_{\ell}$ of $\J_1$ when $C_{\ell}$ is the clause $(x_1 \vee x_2 \vee x_3)$.
Similarly, suppose that $C_{\ell}$ is of type 2 and assume, without loss of generality, that $a_i = a_j$ and $a_k \neq a_i,a_j$.
Add a copy $J_{\ell}$ of $\J_2$ in which $T_i,T_j,T_k$ play the roles of $T_1,T_2,T_3$, respectively; the primary edges are the edges in $T_i,T_j,T_k$ of colors $a_i,a_j,a_k$, respectively; and all other vertices are new.
For example, the right part of Figure \ref{fig:gadgets} depicts the copy $\J_2$ when $C_{\ell}$ is the clause $(x_1 \vee x_2 \vee \overline{x_3})$.
By Claim \ref{claim:gadget 3-coloring}, in either of the cases (namely, independently of the type of $C_{\ell}$), we can extend the $3$-coloring of $T_i,T_j,T_k$ into a proper $3$-coloring of $J_{\ell}$.
	
	The resulting graph is $G$. By the above, $G$ is $3$-colorable, and a $3$-coloring of $G$ can be explicitly specified. Let $m_0$ be the number of clauses $C_{\ell}$ ($1 \leq \ell \leq m$) of type 1; so $m-m_0$ clauses are of type 2.
	If $C_{\ell}$ is of type $1$ then set $b_{\ell} := 2$, and if $C_{\ell}$ is of type 2 then set $b_{\ell} := 3$ (this is in accordance with Claim \ref{claim:gadget correctness}). Note that $\sum_{\ell=1}^m b_{\ell} = n + 3m - m_0 = 2m_0 + 3(m-m_0) = 3m - m_0$.
	
	To complete the proof, we claim that
	$\rem_2(G,\P,K_3) = n + 3m - m_0$ if and only if $\Phi$ is satisfiable. First, we claim that $\rem_2(G,\P,K_3) \geq n + 3m - m_0$ (regardless of whether or not $\Phi$ is satisfiable).
	Indeed, in order to make $G$ triangle-free, we have to delete at least one edge from $T_i$ for every $i=1,\dots,n$, and we also have to delete at least $b_{\ell}$ internal edges of $J_{\ell}$ for every $1 \leq \ell \leq m$, by Item 1 of Claim \ref{claim:gadget correctness}. Thus, we must delete at least $n + \sum_{\ell=1}^m b_{\ell} = n + 3m - m_0$ edges, as claimed.
	Now, suppose that $\rem_2(G,\P,K_3) = n + 3m - m_0$, and fix a minimal-size set $E \subseteq E(G)$ such that $G-E$ is triangle-free. As $|E| = n + 3m - m_0$, the above lower bound on $\rem_2(G,\P,K_3)$ is tight, meaning that $E$ must contain exactly one edge from $T_i$ for every $1 \leq i \leq n$ and exactly $b_{\ell}$ internal edges from $J_{\ell}$ for every $1 \leq \ell \leq m$.
	Also, we may assume that for every $1 \leq i \leq n$, $E$ does not contain the $\{2,3\}$-colored edge of $T_i$. Indeed, otherwise, we can replace this edge with the $\{1,2\}$- or $\{1,3\}$-colored edge of this $T_i$, and the resulting $E$ will still intersect every triangle of $G$, because the $\{2,3\}$-colored edge of $T_i$ participates in only one triangle in $G$, namely $T_i$.
	Now, set variable $x_i$ to be true if the $\{1,2\}$-colored edge of $T_i$ belongs to $E$, and set $x_i$ to be false if the $\{1,3\}$-colored edge of $T_i$ belongs to $E$.
	Fix any $1 \leq \ell \leq m$ and suppose that $C_{\ell}$ has variables $x_i,x_j,x_k$. By the ``moreover"-part of Item 1 in Claim \ref{claim:gadget correctness}, the set $E$ must contain one of the primary edges of $J_{\ell}$. Without loss of generality, suppose that $E$ contains the primary edge belonging to $T_i$. By the definition of $J_{\ell}$, this edge is the $\{1,2\}$-edge of $T_i$ if $x_i$ appears in $C_{\ell}$ and the $\{1,3\}$-edge of $T_i$ if $\overline{x_i}$ appears in $C_{\ell}$. The fact that this edge belongs to $E$, and the way we assigned a truth-value to $x_i$, imply that $C_{\ell}$ is satisfied.
	
	In the other direction, suppose that $\Phi$ is satisfiable, and fix a satisfying assignment to $x_1,\dots,x_n$. We define an edge-set $E \subseteq E(G)$ as follows. For each $i = 1,\dots,n$, place the $\{1,2\}$-edge (resp. $\{1,3\}$-edge) of $T_i$ into $E$ if $x_i$ is true (resp. false); denote this chosen edge by $\hat{e_i}$.
	Now fix $1 \leq \ell \leq m$ and suppose $C_{\ell}$ has variables $x_i,x_j,x_k$. Without loss of generality, suppose that the variable $x_i$ satisfies $C_{\ell}$.
	Thus, if $x_i$ appears in $C_{\ell}$, then $\hat{e_i}$ is the $\{1,2\}$-edge of $T_i$, and if $\overline{x_i}$ appears in $C_{\ell}$, then $\hat{e_i}$ is the $\{1,3\}$-edge of $T_i$.
	By the definition of $J_{\ell}$, this means that $\hat{e_i}$ is the primary edge belonging to $T_i$ in the gadget $J_{\ell}$.
	Now, by Item 2 of Claim \ref{claim:gadget correctness}, there exists a set $E_{\ell} \subseteq E(J_{\ell})$ such that $J_{\ell} - E_{\ell}$ is triangle-free, and $E_{\ell}$ consists of the edges $\hat{e_i},\hat{e_j},\hat{e_k}$ and of exactly $b_{\ell}$ internal edges of $J_{\ell}$. Add $E_{\ell}$ to $E$. Doing this for every $\ell = 1,\dots,m$ gives a set $E \subseteq E(G)$ such that $G - E$ is triangle-free and $|E| = n + \sum_{\ell=1}^{m}b_{\ell} = n+3m-m_0$, as required.
	This completes the proof of the lemma.

\section{Proof of Theorem \ref{thm:matching}}\label{sec:matching}

Let $M_r$ denote the $k$-uniform matching of size $r$ (throughout this section, we consider $k$-uniform hypergraphs, so we omit $k$ from the notation), and let $G$ be a $k$-uniform hypergraph.
As we mentioned in the introduction, our goal is to devise an inductive process for generating
the set $\mathcal{M}_r(G)$, which is the set of maximal (with respect to inclusion) subgraphs of $G$ without a matching of size $r$. This process is described in Lemma \ref{lem:main1} below.
In what follows, we use $\nu(G)$ to denote the matching number of $G$, namely the size of a largest matching in $G$.
For a subset $S \subseteq V(G)$, let $L_G(S)$ denote the {\em link} of $S$, i.e., $L_G(S)$ is the $(k-|S|)$-graph on $V(G)$ with edge-set $\{e \setminus S : e \in E(G), S \subseteq e\}$. For an integer $t$, a set $S$ is called {\em $t$-heavy} in $G$ if $\nu(L_G(S)) > t$, and otherwise $S$ is called {\em $t$-light}. We start with the following simple yet useful observation.

\begin{lemma}\label{lem:heavy set}
Let $G$ be a $k$-graph and let $F$ be a subgraph of $G$ with $\nu(F) < r$. Let $S \subseteq V(G)$ such that $S$ is $t$-heavy in $F$, where $t = (r-1)k$. Let $F'$ be the $k$-graph obtained from $F$ by adding all edges of $G$ containing $S$. Then $\nu(F') < r$.
\end{lemma}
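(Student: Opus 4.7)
The plan is to argue by contradiction. Suppose $F'$ has a matching $M = \{e_1,\dots,e_r\}$ of size $r$. Every edge of $E(F') \setminus E(F)$ contains $S$, and any two edges containing a nonempty $S$ would share the vertices of $S$ and hence cannot both lie in a matching. First I would dispose of the trivial case $S = \emptyset$: then $L_F(S) = F$, so $S$ being $t$-heavy means $\nu(F) > (r-1)k \geq r-1$, contradicting $\nu(F) < r$ directly. Hence $S \neq \emptyset$, and at most one edge of $M$ can lie in $E(F') \setminus E(F)$.

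If no edge of $M$ contains $S$, then $M \subseteq E(F)$ gives $\nu(F) \geq r$, contradiction. So exactly one edge of $M$, say $e_r$, contains $S$, while $e_1,\dots,e_{r-1} \in E(F)$ form a matching of size $r-1$ in $F$. The key step is to extend this matching by an edge of $F$ that passes through $S$, using the hypothesis that $L_F(S)$ has matching number greater than $t = (r-1)k$.

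Concretely, fix a matching $T_1,\dots,T_{(r-1)k+1}$ in $L_F(S)$, so that the sets $T_i$ are pairwise disjoint, disjoint from $S$, and each $T_i \cup S$ is an edge of $F$. The union $e_1 \cup \dots \cup e_{r-1}$ contains at most $(r-1)k$ vertices, and since the $T_i$ are pairwise disjoint, at most $(r-1)k$ of them can meet this union. Thus some $T_{i_0}$ is disjoint from $e_1 \cup \dots \cup e_{r-1}$. Moreover $S \subseteq e_r$ and $e_r$ is disjoint from $e_1 \cup \dots \cup e_{r-1}$, so $S$ is also disjoint from this union. It follows that $T_{i_0} \cup S$ is an edge of $F$ disjoint from each of $e_1,\dots,e_{r-1}$, giving a matching of size $r$ in $F$, which contradicts $\nu(F) < r$.

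I don't anticipate a substantial obstacle here: once the trivial $S=\emptyset$ case is removed, the argument is a clean pigeonhole on the link matching. The only point that requires care is verifying that the extending edge $T_{i_0} \cup S$ avoids all previous edges, which uses both the size inequality $|e_1 \cup \dots \cup e_{r-1}| \leq (r-1)k$ and the observation that $S$ itself is separated from $e_1,\dots,e_{r-1}$ because $S$ sits inside the matching edge $e_r$. The choice of threshold $t = (r-1)k$ in the definition of heaviness is precisely what makes this pigeonhole work, giving the $+1$ slack needed after covering the $\leq (r-1)k$ vertices of the remaining matching.
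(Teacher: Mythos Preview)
Your proof is correct and follows essentially the same approach as the paper: argue by contradiction, isolate the (at most one) matching edge containing $S$, then use a pigeonhole on a matching of size $t+1$ in $L_F(S)$ against the $(r-1)k$ vertices of the remaining edges to produce an $r$-matching in $F$. Your explicit treatment of the degenerate case $S=\emptyset$ is an extra bit of care that the paper's write-up glosses over (it tacitly uses $S\neq\emptyset$ when concluding $e_2,\dots,e_r\in E(F)$), but otherwise the arguments are the same.
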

\begin{proof}

Suppose by contradiction that there is a matching $e_1, \dots, e_r$ in $F'$. Since $\nu(F) < r$, one of the edges $e_i$ must be in $E(F') \setminus E(F)$, meaning that it contains $S$.
Without loss of generality, assume that $S \subseteq e_1$. 
As $e_1,\dots,e_r$ are pairwise disjoint, we get that $S$ is disjoint from $e_2,\dots,e_r$, and hence $e_2,\dots,e_r \in E(F)$. 
Next, as $S$ is $t$-heavy in $F$, there exists a matching $f_1, \dots, f_{t+1}$ in $L_F(S)$. Note that $|e_2 \cup \dots \cup e_r| = (r-1)k$, and so, the set $e_2 \cup \dots \cup e_r$ intersects at most $(r-1)k$ of the edges $f_1, \dots, f_{t+1}$. As $t+1 > (r-1)k$, there is $1 \leq i \leq t+1$ such that $f_i$ is disjoint from
$e_2, \dots, e_r$. We now get that $f_i \cup S, e_2, \dots, e_r$ is a matching of size $r$ in $F$ -- a contradiction.
\end{proof}
		

We are now ready to describe the inductive statement of our process. Recall that $\mathcal{M}_r(G)$ denotes the set of inclusion-wise maximal subgraphs of $G$ without a matching of size $r$

\begin{lemma}\label{lem:main1}
Given a $k$-graph $G$, one can generate in time $\poly(n)$, for every $0 \leq i \leq k$,
a family\footnote{In particular, each family $\mathcal{H}_i$ has polynomial size.} $\mathcal{H}_i$ of pairs $(H,A_H)$ so that $H$ is a spanning subgraph of $G$, and $A_H \subseteq V(G) = V(H)$ is of size $|A_H| = O(1)$, such that the following holds. For every $F \in \mathcal{M}_r(G)$, there is $(H,A_H) \in \mathcal{H}_i$ such that for every $e \in E(G)$ with $|e \cap A_H| \leq i$, it holds that $e \in E(F)$ if and only if $e \in E(H)$.
\end{lemma}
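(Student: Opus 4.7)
We proceed by induction on $i$, maintaining two invariants as $\mathcal{H}_i$ is built: every pair $(H,A_H)\in\mathcal{H}_i$ satisfies $|A_H|\leq c_i$ for a constant $c_i=c_i(k,r)$, and $|\mathcal{H}_i|$ is polynomial in $n$. The set $A_H$ plays the role of a small ``certificate'' that is progressively enriched as $i$ grows, while the agreement requirement on edges with $|e\cap A_H|\leq i$ becomes correspondingly stronger.

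For the base case $i=0$, given any $F\in \mathcal{M}_r(G)$ we fix a maximum matching $M_F$ in $F$. Since $\nu(F)<r$ the set $V(M_F)$ has size at most $(r-1)k$, and by maximality every edge of $F$ meets $V(M_F)$. We populate $\mathcal{H}_0$ by enumerating all subsets $A\subseteq V(G)$ of size at most $(r-1)k$ -- there are $O(n^{(r-1)k})$ of them -- and for each $A$ we insert the pair $(H_A,A)$, where $H_A$ consists of the edges of $G$ that meet $A$. Choosing $A=V(M_F)$ then shows that $(H_A,A)$ handles $F$ at level $0$: both $F$ and $H_A$ have no edges disjoint from $A$.

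For the inductive step, suppose $(H,A)\in \mathcal{H}_i$ handles $F$ at level $i$. For every $(i+1)$-subset $S\subseteq A$ one of two alternatives holds in $F$: either $S$ is $(r-1)k$-heavy, in which case Lemma~\ref{lem:heavy set} combined with the maximality of $F$ implies that $F$ already contains every edge of $G$ containing $S$; or $S$ is $(r-1)k$-light, meaning $\nu(L_F(S))\leq (r-1)k$, in which case a maximum matching in $L_F(S)$ has vertex-union $B_S\subseteq V(G)\setminus A$ of size at most $(r-1)k(k-i-1)$ that meets every edge of $L_F(S)$. Not knowing $F$ in advance, we enumerate over all joint guesses: for each of the $\binom{|A|}{i+1}=O(1)$ subsets $S\subseteq A$ we guess heavy or light, and in the light case we guess a set $B_S\subseteq V(G)\setminus A$ of the allowed size. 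Jointly this is $\poly(n)$ guesses per pair. For each guess we set $A':=A\cup \bigcup_S B_S$, which is again of constant size, and define $H'$ by starting from $H$ and modifying only edges with $|e\cap A|=i+1$: for each $(i+1)$-subset $S\subseteq A$ and each $G$-edge $e$ with $e\cap A=S$, put $e\in H'$ when $S$ is guessed heavy, and put $e\in H'$ iff $(e\setminus S)\cap B_S\neq \emptyset$ when $S$ is guessed light.

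To verify correctness, fix the joint guess that matches the truth in $F$ and consider any $e\in E(G)$ with $|e\cap A'|\leq i+1$. If $|e\cap A|\leq i$ then $H'=H$ on $e$ and the conclusion follows from the inductive hypothesis. Otherwise $|e\cap A|=i+1$ and $e\cap(A'\setminus A)=\emptyset$; writing $S=e\cap A$, the heavy case forces $e\in F$ and, by construction, $e\in H'$, while the light case forces $(e\setminus S)\cap B_S=\emptyset$ (because $B_S\subseteq A'\setminus A$), hence both $e\notin F$ and $e\notin H'$. Each pair in $\mathcal{H}_i$ spawns only $\poly(n)$ refinements and $|A'|$ grows by $O(1)$ per level, so both invariants persist up to $i=k$. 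The main obstacle, as foreshadowed in the paper's overview, is arriving at the ``correct'' inductive statement; packaging everything into pairs $(H,A_H)$ together with the intersection-bounded agreement condition is precisely what lets the heavy/light dichotomy of Lemma~\ref{lem:heavy set} drive a clean, polynomially bounded induction.
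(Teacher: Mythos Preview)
Your proof is correct and follows essentially the same inductive scheme as the paper: the heavy/light dichotomy on $(i{+}1)$-subsets of $A$, the use of Lemma~\ref{lem:heavy set} for heavy sets, and the enumeration of small matchings in the link for light sets are exactly the paper's ingredients, with only cosmetic differences (your base case takes $H_A$ to be all $G$-edges meeting $A$ rather than the matching itself, and you handle the light case by showing both sides are ``out'' rather than deriving a contradiction). One small imprecision: a maximum matching in $L_F(S)$ need not have its vertex-union inside $V(G)\setminus A$, but since you only ever test edges $e$ with $e\cap A=S$ (so $e\setminus S$ is disjoint from $A$), it suffices to take $B_S$ to be the vertex set of a maximum matching in the sub-link consisting of those $L_F(S)$-edges that avoid $A$, which does lie in $V(G)\setminus A$ and still meets every relevant $e\setminus S$.
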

Note (crucially) that if we take
$i=k$ in the above lemma, then we are guaranteed that for every $F \in \mathcal{M}_r(G)$, there is a pair $(H,A_H) \in \mathcal{H}_k$ such that for every $e \in E(G)$ with $|e \cap A_H| \leq k$ (namely, for every $e \in E(G)$), we have $e \in E(F)$ if and only if $e \in E(H)$. Thus $H = F$, meaning that $\mathcal{H}_k$ contains a pair of the form $(F,A_F)$. So indeed $\mathcal{M}_r(G) \subseteq \mathcal{H}_k$.

\begin{proof}[Proof of Lemma \ref{lem:main1}]
The proof is by induction on $i$. For the base case $i=0$, take $\mathcal{H}_0$ to consist of all pairs $(M,A_M)$
where $M$ is a matchings in $G$ of size at most $r-1$ and $A_M$ is the vertex set of $M$. Clearly, all such pairs can be enumerated in time $O(n^{(r-1)k})$,
and all sets $A_M$ are of size at most $(r-1)k=O(1)$. Now, fix any $F \in \mathcal{M}_r(G)$, and let $M$ be a maximal matching in $F$ (so $|M| \leq r-1$).
By the definition of $\mathcal{H}_0$, we have $(M,A_M) \in \mathcal{H}_0$ (where $A_M$ is the set of vertices covered by $M$).
Now it is clear that for every $e \in E(G)$ satisfying $|e \cap A_M| = 0$, we have $e \notin M$ and $e \notin E(F)$, since $M$ is a maximal matching in $F$.

		
Now let $1 \leq i \leq k$. By the induction hypothesis, one can generate in time $\poly(n)$ a family $\mathcal{H}_{i-1}$ with the properties stated in the lemma. To define $\mathcal{H}_i$, we proceed as follows. Set $t := (r-1)k$.
Go over all $(H,A_H) \in \mathcal{H}_{i-1}$, and for each $H$, go over all families of sets $\mathcal{F} \subseteq \binom{A_H}{i}$.
For each such family of sets ${\cal F}$, let $H_{\cal F}$ be the $k$-graph consisting of the edges of $H$, together with all edges of $G$ containing one of the sets in $\binom{A_H}{i} \setminus \mathcal{F}$.
Note that each $H \in \mathcal{H}_{i-1}$ generates at most $2^{\binom{|A_H|}{i}}=O(1)$ graphs $H_{\cal F}$.
Now, for each ${\cal F}$ as above, go over all functions $f : \mathcal{F} \rightarrow 2^{E(G)}$ with the property that for every $S \in \mathcal{F}$ it holds that $|f(S)| \leq t$; $S \subseteq e$ for every $e \in f(S)$; and $\{e \setminus S : e \in f(S)\}$ is a matching.
Namely, the function $f$ chooses, for each $S \in \mathcal{F}$, a matching of size at most $t$ in $L_G(S)$.
Finally, let $A_{\mathcal{F},f}$ be the union of $A_H$ and all edges $e$ for $e \in f(S), S \in \mathcal{F}$.
Then $|A_{\mathcal{F},f}| \leq |A_H| + \binom{|A_H|}{i} \cdot t(k-i) = O(1)$.
The family $\mathcal{H}_i$ then contains all pairs $(H_{\cal F},A_{\mathcal{F},f})$.
Note that for every graph $H_{\cal F}$ as defined above, we put in $\mathcal{H}_{i}$ at most $O\left( n^{\binom{|A_H|}{i} \cdot t(k-i)} \right)$ pairs\footnote{We stress that each ${\cal F}$ defines a single graph $H_{\cal F}$ but several sets $A_{\mathcal{F},f}$ (one for each $f$). Hence, $\mathcal{H}_i$ contains several pairs $(H,A_H)$ with the same $H$.} $(H_{\cal F},A_{\mathcal{F},f})$, one for each function $f$ as above.
It is thus clear that if $\mathcal{H}_{i-1}$ can be generated in time $\poly(n)$ then so can $\mathcal{H}_{i}$.
		

It remains to show that $\mathcal{H}_i$ has the desired property. So fix any
$F \in \mathcal{M}_r(G)$. By the induction hypothesis, there is $(H,A_H) \in \mathcal{H}_{i-1}$ such that for every $e \in E(G)$ with $|e \cap A_H| \leq i-1$, it holds that $e \in E(F)$ if and only if $e \in E(H)$. Let $\mathcal{F}$ be the set of all $i$-tuples $S \in \binom{A_H}{i}$ such that $S$ is $t$-light in $F$. Let $H_{\cal F}$ be as defined in the previous paragraph, i.e., $H_{\cal F}$ consists of the edges of $H$ and all edges of $G$ which contain a set in $\binom{A_H}{i} \setminus \mathcal{F}$.
For each $S \in \mathcal{F}$, let $M(S)$ be a maximal matching in $L_{F}(S)$; so $|M(S)| \leq t$ because $S$ is $t$-light. Let $f(S) := \{S \cup e : e \in M(S)\}$.
Let $A_{\mathcal{F},f}$ be as defined in the previous paragraph, recalling that $A_{\mathcal{F},f}$ is the union of $A_H$ and all edges $e$ with $e \in f(S), S \in \mathcal{F}$.
We claim that the pair $(H_{\cal F},A_{\mathcal{F},f})$ satisfies the desired property of the lemma, namely, that for every edge $e \in E(G)$, if $|e \cap A_{\mathcal{F},f}| \leq i$, then $e \in E(F)$ if and only if $e \in E(H_{\cal F})$. Suppose first that $|e \cap A_H| \leq i-1$. Then $e \in E(F)$ if and only if $e \in E(H)$, by the choice of $H$. Also, observe that $e \in E(H)$ if and only if $e \in E(H_{\cal F})$, because every edge in $E(H_{\cal F}) \setminus E(H)$ intersects $A_H$ in at least $i$ vertices. Thus, $e \in E(F)$ if and only if $e \in E(H_{\cal F})$.
		
Now suppose that $|e \cap A_H| \geq i$. Then, as $|e \cap A_{\mathcal{F},f}| \leq i$ and $A_H \subseteq A_{\mathcal{F},f}$, we must have $|e \cap A_H| = i$ and $S := e \cap A_H = e \cap A_{\mathcal{F},f}$. There are now two cases: Suppose first that $S \in \mathcal{F}$, i.e., $S$ is $t$-light in $F$. Let $M(S)$ be the maximal matching in $L_{F}(S)$ we have chosen above. Since $S \subseteq e$, we have that $e \setminus S$ is an edge in $L_{F}(S)$, and as $M(S)$ is a maximal matching, we have that $(e \setminus S) \cap V(M(S)) \neq \emptyset$. On the other hand, we have $V(M(S)) \subseteq A_{\mathcal{F},f}$ by the definition of $A_{\mathcal{F},f}$. It follows that $|e \cap A_{\mathcal{F},f}| > |S| = i$, a contradiction.
So it remains to consider the case where $S$ is $t$-heavy in $F$. By Lemma \ref{lem:heavy set}, adding to $F$ all edges of $G$ containing $S$ does not create a matching of size $r$. Recall that $F \in \mathcal{M}_r(G)$, i.e., $F$ is an inclusion-wise maximal subgraph of $G$ with no matching of size $r$. Thus, $F$ must contain all edges of $G$ containing $S$. In particular, $e \in E(F)$. Also, $e \in E(H_{\cal F})$ since $S \in \binom{A_H}{i} \setminus \mathcal{F}$. Thus, $e \in E(F)$ if and only if $e \in E(H_{\cal F})$, as required. This completes the proof of the lemma.
\end{proof}
\noindent



\begin{proof}[Proof of Theorem \ref{thm:matching}]
Let $\mathcal{H}_k$ be the family of pairs $(H,A_H)$ generated by Lemma \ref{lem:main1} (in time $\poly(n)$). To compute $\ex_{M_r}(G)$, we compute the maximum value of $e(H)$ over all graphs $H$ belonging to the pairs in $\mathcal{H}_k$ which are $M_r$-free. Denote this maximum by $m$. Note that $m$ can be computed in polynomial time because $\mathcal{H}_k$ is of polynomial size, and checking whether a $k$-graph $H$ is $M_r$-free can clearly be done in time $O(n^{kr})$. We now claim that $m = \ex_{M_r}(G)$. Clearly, $m \leq \ex_{M_r}(G)$, because the maximum in the definition of $m$ is taken over $M_r$-free subgraphs of $G$. On the other hand, $\ex_{M_r}(G) = \max_{F \in \mathcal{M}_r(G)}e(F)$ holds by the definition of $\mathcal{M}_r(G)$. As we noted immediately after the statement of Lemma \ref{lem:main1}, for every $F \in \mathcal{M}_r(G)$
we have a pair of the form $(F,A_F)$ in $\mathcal{H}_k$. Thus, $m \geq \ex_{M_r}(G)$. This completes the proof of the theorem.
\end{proof}

\appendix

\section{Proof of Proposition \ref{prop1}}

For integers $k \geq 3, \, k > t \geq 1$, let $E_t^{(k)}$ denote a pair of $k$-edges that intersect in exactly $t$ vertices.

\begin{theorem}\label{thm:two-intersecting-edges-hardness}
	For any integers $k \geq 3, \, k > t \geq 1$, it is NP-hard to compute $\ex(G,E_t^{(k)})$ for an input $k$-uniform hypergraph $G$ on $n$ vertices.
\end{theorem}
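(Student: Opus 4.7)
My plan is to bootstrap from two base cases via simple transfer operations that move between parameter pairs $(k,t)$ while preserving the answer. Two operations suffice. (A) \emph{Private padding}: given a $k$-graph $G$, form $G_+$ by adjoining to each edge a fresh private new vertex $u_e$; since each $u_e$ lies in only one edge, every pairwise intersection is preserved exactly, so $\ex(G, E_t^{(k)}) = \ex(G_+, E_t^{(k+1)})$, transferring hardness from $(k,t)$ to any $(k',t)$ with $k' \geq k$. (B) \emph{Universal cone}: given $G$, form $G_*$ by adjoining a single new vertex $w$ common to every edge; each pairwise intersection grows by exactly $1$, so $\ex(G, E_t^{(k)}) = \ex(G_*, E_{t+1}^{(k+1)})$, transferring hardness from $(k,t)$ to $(k+1,t+1)$. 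Together, NP-hardness of the base pairs $(3,1)$ and $(3,2)$ implies NP-hardness of every $(k,t)$ with $k \geq 3$ and $1 \leq t \leq k-1$: from $(3,1)$, private padding gives $(k,1)$ for every $k \geq 3$ and $j$ applications of universal cone then yield $(k+j, 1+j)$, covering all pairs with $t \leq k-2$; from $(3,2)$, iterating universal cone yields $(3+j, 2+j)$, covering the diagonal $t = k-1$.

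The base case $(3,1)$ admits a one-line proof by reducing from $3$-DIMENSIONAL MATCHING on \emph{linear} $3$-partite $3$-uniform hypergraphs -- a well-known NP-hard variant in which any two edges share at most one vertex. On such an instance $G$, two distinct edges share either $0$ or $1$ vertex, so an $E_1^{(3)}$-free subhypergraph of $G$ is precisely a matching and $\ex(G, E_1^{(3)}) = \nu(G)$, which is NP-hard to compute.

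The main technical step is the base case $(3,2)$, equivalent to computing the maximum linear subhypergraph of an arbitrary $3$-graph. I would reduce from MAXIMUM INDEPENDENT SET in bounded-degree graphs. For each vertex $v$ of the input graph $H$ I place a ``trigger'' $3$-edge $h_v$ in a $3$-graph $G$, with labels assigned so that $|h_u \cap h_v| = 2$ if and only if $\{u,v\} \in E(H)$; additionally, attached to each vertex of $H$ is a pool of ``filler'' $3$-edges, pairwise disjoint from each other and from all triggers so that no filler ever takes part in an $E_2^{(3)}$ copy. The bounded-degree assumption is essential because each $3$-edge has only $\binom{3}{2} = 3$ pairs available for sharing. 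With this set-up $\ex(G, E_2^{(3)}) = (\text{number of fillers}) + \alpha(H)$, so NP-hardness of IS transfers. The main obstacle is exactly this gadget design: ensuring that triggers attached to non-adjacent pairs of vertices of $H$ do not accidentally share two vertices, and that fillers remain inert with respect to the $E_2^{(3)}$ pattern. Once this combinatorial check is carried out, the rest of the argument is routine.
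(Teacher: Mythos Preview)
Your transfer operations (A) and (B) are correct and coincide exactly with the paper's Propositions~\ref{prop:ex(G,k,t)<=ex(G',k+1,t)} and~\ref{prop:ex(G,k,t)<=ex(G',k+1,t+1)}, and your way of composing them to cover all $(k,t)$ is equivalent to the paper's. Your base case $(3,1)$ via linear $3$-dimensional matching is different from the paper's route (the paper obtains $(3,1)$ as a minor variation of its $(3,2)$ gadget, see Remark~\ref{rem:(3,1)-also-hard}), but your argument is sound provided you cite the NP-hardness of maximum matching in linear $3$-graphs.

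The genuine gap is in your base case $(3,2)$. Your plan requires, for a bounded-degree graph $H$, a system of $3$-sets $(h_v)_{v\in V(H)}$ with $|h_u\cap h_v|=2$ if and only if $\{u,v\}\in E(H)$. But this is impossible already for $K_{3,3}$, which is $3$-regular. Indeed, set $h_{a_1}=\{1,2,3\}$; the three $h_{b_j}$ must use the three distinct pairs $\{1,2\},\{1,3\},\{2,3\}$ (else two $b_j$'s would share a pair and hence two elements), so $h_{b_j}=p_j\cup\{x_j\}$ with $x_j\notin\{1,2,3\}$ and the $x_j$ pairwise distinct (else again two $b_j$'s share two elements). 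Now $h_{a_2}$ meets $h_{a_1}$ in at most one element, so it cannot contain any pair from $\{1,2,3\}$; hence from $|h_{a_2}\cap h_{b_j}|=2$ we get $x_j\in h_{a_2}$ for each $j$, forcing $h_{a_2}=\{x_1,x_2,x_3\}$ and then $|h_{a_2}\cap h_{b_1}|=1$, a contradiction. So your ``trigger'' encoding cannot realize all degree-$3$ graphs, and since Independent Set on degree-$\leq 2$ graphs is polynomial, you cannot simply lower the degree bound. Restricting to a subclass (planar cubic, large girth, etc.) does not obviously help, and you would need a separate argument that such graphs are realizable. The fillers are irrelevant here; the obstruction is entirely in the trigger layer.

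The paper avoids this obstacle by taking a different route for $(3,2)$: it reduces from $3$-OCC-MAX-$2$-SAT. Each variable $x_i$ gets a pair of $3$-edges $e_{x_i},e_{\overline{x_i}}$ sharing two vertices, and each clause $(a\lor b)$ contributes two further edges forming a four-edge ``path'' $(e_a,f^a_*,f^b_*,e_b)$ in which consecutive edges share two vertices. A short case analysis then shows $\exbar(G_\varphi,E_2^{(3)})=n+2m-\ell_\varphi$. The bounded-occurrence restriction is what makes these local gadgets glue together consistently; this is the design ingredient your proposal is missing.
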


\noindent We begin by proving the following two simple propositions.
\begin{proposition}\label{prop:ex(G,k,t)<=ex(G',k+1,t)}
	For any integers $k \geq 3, \, k > t \geq 1$, computing $\ex(G',E_t^{(k+1)})$ is at least as hard as
	computing $\ex(G,E_t^{(k)})$ for $n$-vertex input hypergraphs.
\end{proposition}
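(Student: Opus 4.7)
The plan is to describe an explicit polynomial-time reduction. Given a $k$-uniform input $G$ on $n$ vertices, I would construct a $(k+1)$-uniform hypergraph $G'$ as follows: set $V(G') = V(G) \cup \{v_e : e \in E(G)\}$, where $\{v_e : e \in E(G)\}$ is a collection of distinct new vertices disjoint from $V(G)$, one per edge of $G$; and set $E(G') = \{e \cup \{v_e\} : e \in E(G)\}$. Since $|V(G')| \leq n + \binom{n}{k} = \poly(n)$ (as $k$ is fixed) and $G'$ can clearly be computed in polynomial time, this is a valid polynomial reduction.

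The key observation is intersection-preservation: for any two distinct edges $e_1, e_2 \in E(G)$, the corresponding $(k+1)$-edges $e_1' = e_1 \cup \{v_{e_1}\}$ and $e_2' = e_2 \cup \{v_{e_2}\}$ of $G'$ satisfy
\[
e_1' \cap e_2' = e_1 \cap e_2,
\]
because $v_{e_1} \neq v_{e_2}$ and both lie outside $V(G)$. In particular, $|e_1' \cap e_2'| = t$ if and only if $|e_1 \cap e_2| = t$.

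From this, the bijection $e \mapsto e \cup \{v_e\}$ between $E(G)$ and $E(G')$ extends to a bijection between subgraphs of $G$ and subgraphs of $G'$ which preserves the property of being $E_t^{(k)}$-free (resp.\ $E_t^{(k+1)}$-free) and the number of edges. Hence
\[
\ex(G', E_t^{(k+1)}) = \ex(G, E_t^{(k)}),
\]
so any algorithm computing the left-hand side yields one computing the right-hand side. This establishes the desired reduction.

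There is no serious obstacle in this argument; the only point requiring care is the bookkeeping ensuring that the new vertices are genuinely disjoint from $V(G)$ and from each other, so that the intersection sizes are exactly preserved (and not accidentally inflated by shared auxiliary vertices, which is what would happen if one instead added a single common vertex to every edge).
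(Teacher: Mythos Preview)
Your proposal is correct and matches the paper's own proof essentially verbatim: the paper also builds $G'$ by adjoining a fresh vertex $v_e$ to each edge $e$ of $G$ and then argues that the bijection $e \mapsto e \cup \{v_e\}$ preserves pairwise intersection sizes, yielding $\ex(G',E_t^{(k+1)}) = \ex(G,E_t^{(k)})$. Your closing remark about not reusing a single common vertex is also apt, since that alternative construction is precisely what the paper uses in the companion reduction (Proposition~\ref{prop:ex(G,k,t)<=ex(G',k+1,t+1)}) to bump $t$ by one.
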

\begin{proof}
	Let $G$ be a $k$-uniform hypergraph. We reduce the problem of computing $\ex(G,E_t^{(k)})$ to the problem of computing $\ex(G',E_t^{(k+1)})$, where $G'$ is the following $(k+1)$-uniform hypergraph: For every edge $e \in E(G)$, add to $e$ a new vertex $v_e$. The resulting graph is $G'$, which is clearly $(k+1)$-uniform.
	We now show that $\ex(G,E_t^{(k)}) = \ex(G',E_t^{(k+1)})$.
	In one direction, let $F$ be an $E_t^{(k)}$-free subgraph of $G$ with $|E(F)| = \ex(G,E_t^{(k)})$.
	Let $F'$ be the subgraph of $G'$ whose edge-set is $\{e \cup \{v_e\} : e \in E(F)\}$.
	It is straightforward to see that $F'$ is $E_t^{(k+1)}$-free, and so $\ex(G',E_t^{(k+1)}) \geq |E(F')| = |E(F)| = \ex(G,E_t^{(k)})$.
	
	In the other direction, let $F'$ be an $E_t^{(k+1)}$-free subgraph of $G'$ with $|E(F')| = \ex(G',E_t^{(k+1)})$.
	Let $F$ be the subgraph of $G$ whose edge-set is $\{e \setminus \{v_e\} : e \in E(F')\}$.
	It is straightforward to see that $F$ is $E_t^{(k)}$-free, and so $\ex(G,E_t^{(k)}) \geq |E(F)| = |E(F')| = \ex(G',E_t^{(k+1)})$.		
\end{proof}

\begin{proposition}\label{prop:ex(G,k,t)<=ex(G',k+1,t+1)}
	For any integers $k \geq 3, \, k > t \geq 1$, computing $\ex(G',E_{t+1}^{(k+1)})$ is at least as hard as
	computing $\ex(G,E_t^{(k)})$ for $n$-vertex input hypergraphs.
\end{proposition}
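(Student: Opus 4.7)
The plan is to mimic the gadget used in Proposition~\ref{prop:ex(G,k,t)<=ex(G',k+1,t)}, but instead of attaching a \emph{fresh} vertex $v_e$ to each edge $e$, to attach a single \emph{global} new vertex $v^*$ shared by all edges. Concretely, given an input $k$-graph $G$, define $G'$ to be the $(k+1)$-graph on vertex set $V(G) \cup \{v^*\}$ with edge set $\{\,e \cup \{v^*\} : e \in E(G)\,\}$. This construction is clearly polynomial in the size of $G$, and every edge of $G'$ has size $k+1$.

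The crucial observation is that for any two distinct edges $e_1, e_2 \in E(G)$, since $v^* \notin V(G)$, we have $(e_1 \cup \{v^*\}) \cap (e_2 \cup \{v^*\}) = (e_1 \cap e_2) \cup \{v^*\}$, so the images of $e_1,e_2$ in $G'$ intersect in exactly $|e_1 \cap e_2| + 1$ vertices. Consequently, $\{e_1 \cup \{v^*\},\, e_2 \cup \{v^*\}\}$ forms a copy of $E_{t+1}^{(k+1)}$ in $G'$ if and only if $\{e_1, e_2\}$ forms a copy of $E_t^{(k)}$ in $G$.

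Since every edge of $G'$ contains $v^*$, there is a natural bijection between subgraphs $F \subseteq G$ and subgraphs $F' \subseteq G'$ given by $F' = \{\,e \cup \{v^*\} : e \in E(F)\,\}$, with $|E(F')| = |E(F)|$. By the previous paragraph, this bijection preserves the property of being $E_t^{(k)}$-free (on the $G$ side) and $E_{t+1}^{(k+1)}$-free (on the $G'$ side). Taking the maximum over each side therefore yields $\ex(G, E_t^{(k)}) = \ex(G', E_{t+1}^{(k+1)})$, which establishes the desired reduction from computing $\ex(\,\cdot\,, E_t^{(k)})$ on $n$-vertex inputs to computing $\ex(\,\cdot\,, E_{t+1}^{(k+1)})$ on $(n+1)$-vertex inputs.

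There is essentially no obstacle: the argument is a direct adaptation of the previous proposition. The only point worth double-checking is that the new vertex $v^*$ must not already lie in $V(G)$, which is why adding it to every edge raises each pairwise intersection by exactly one rather than zero; this is automatic since $v^*$ is declared to be a new vertex.
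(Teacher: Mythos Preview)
Your proposal is correct and matches the paper's proof essentially verbatim: the paper also adds a single new vertex $v$ to every edge of $G$ and argues that the resulting bijection between subgraphs of $G$ and of $G'$ preserves the relevant forbidden-configuration property, yielding $\ex(G,E_t^{(k)}) = \ex(G',E_{t+1}^{(k+1)})$. Your write-up in fact spells out the intersection-size computation $(e_1 \cup \{v^*\}) \cap (e_2 \cup \{v^*\}) = (e_1 \cap e_2) \cup \{v^*\}$ more explicitly than the paper does.
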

\begin{proof}
	Let $G$ be a $k$-uniform hypergraph. We reduce the problem of computing $\ex(G,E_t^{(k)})$ to the problem of computing
	$\ex(G',E_{t+1}^{(k+1)})$, where $G'$ is the following $(k+1)$-uniform hypergraph: Add a new vertex $v$ to $V(G)$, and then add $v$ to every edge $e \in E(G)$. The resulting graph is $G'$, which is clearly $(k+1)$-uniform.
	We now show that $\ex(G,E_t^{(k)}) = \ex(G',E_{t+1}^{(k+1)})$.
	In one direction, let $F$ be an $E_t^{(k)}$-free subgraph of $G$ with $|E(F)| = \ex(G,E_t^{(k)})$.
	Let $F'$ be the subgraph of $G'$ whose edge-set is $\{e \cup \{v\} : e \in E(F)\}$.
	It is straightforward to see that $F'$ is $E_{t+1}^{(k+1)}$-free, and so $\ex(G',E_{t+1}^{(k+1)}) \geq |E(F')| = |E(F)| = \ex(G,E_t^{(k)})$.
	
	In the other direction, let $F'$ be an $E_{t+1}^{(k+1)}$-free subgraph of $G'$ with $|E(F')| = \ex(G',E_{t+1}^{(k+1)})$.
	Let $F$ be the subgraph of $G$ whose edge-set is $\{e \setminus \{v\} : e \in E(F')\}$.
	It is straightforward to see that $F$ is $E_t^{(k)}$-free, and so $\ex(G,E_t^{(k)}) \geq |E(F)| = |E(F')| = \ex(G',E_{t+1}^{(k+1)})$.
\end{proof}

\noindent In the following lemma we prove Theorem \ref{thm:two-intersecting-edges-hardness} for the basic case of $k=3, \, t=2$. We then combine this lemma with Propositions \ref{prop:ex(G,k,t)<=ex(G',k+1,t)} and \ref{prop:ex(G,k,t)<=ex(G',k+1,t+1)} in order to prove the theorem in its full generality.	
\begin{lemma}\label{lem:(3,2)-hardness}
	It is NP-hard to compute $\ex(G,E_2^{(3)})$ for an input $3$-uniform hypergraph $G$ on $n$ vertices.
\end{lemma}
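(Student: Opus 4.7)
The plan is to reduce the NP-hard problem of computing the maximum number of pairwise edge-disjoint triangles in a graph to the problem of computing $\ex(G, E_2^{(3)})$ on a $3$-uniform hypergraph. The NP-hardness of the maximum edge-disjoint triangle packing problem is classical: it follows, for example, from Holyer's theorem that deciding whether the edge set of a graph can be partitioned into triangles is NP-complete. Indeed, for a graph $H$ with $3 \mid |E(H)|$, a $K_3$-decomposition exists if and only if $H$ contains $|E(H)|/3$ pairwise edge-disjoint triangles, so the optimization version is at least as hard as the decomposition decision problem.

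The reduction itself should be the obvious one. Given an input graph $H$, I would construct a $3$-uniform hypergraph $G$ with $V(G) = V(H)$ whose edge set is exactly the collection of triangles of $H$; this construction is clearly polynomial in $|V(H)|$. The only observation needed for correctness is that two distinct triangles $T_1, T_2$ of $H$, regarded as $3$-element edges of $G$, satisfy $|T_1 \cap T_2| = 2$ if and only if $T_1$ and $T_2$ share a common edge of $H$. Both directions are immediate, since any triangle in $H$ has all $\binom{3}{2}=3$ of its pairs present as edges of $H$. Consequently, a sub-hypergraph $F \subseteq E(G)$ is $E_2^{(3)}$-free precisely when the corresponding collection of triangles of $H$ is pairwise edge-disjoint, and therefore $\ex(G, E_2^{(3)})$ is exactly the maximum number of pairwise edge-disjoint triangles in $H$. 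An algorithm for the former would thus solve the latter, yielding the desired NP-hardness.

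I do not foresee any serious obstacle in this plan. All of the graph-theoretic work is contained in the one-line structural correspondence above; the only non-trivial ingredient is the invocation of the NP-hardness of edge-disjoint triangle packing, which I would take as a black box from the literature rather than reprove.
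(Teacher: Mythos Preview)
Your argument is correct. The correspondence you describe is exact: two distinct triangles of $H$ meet in exactly two vertices if and only if they share an edge of $H$, so an $E_2^{(3)}$-free subhypergraph of $G$ is precisely a family of pairwise edge-disjoint triangles in $H$, and $\ex(G,E_2^{(3)})$ equals the maximum edge-disjoint triangle packing number of $H$. Holyer's NP-completeness of the $K_3$-decomposition problem then gives the reduction you need.

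This is a genuinely different route from the paper's proof. The paper reduces from the 3-OCC-MAX-2-SAT problem of Berman and Karpinski: for a formula $\varphi$ it builds a gadget hypergraph $G_\varphi$ with a pair of intersecting edges $e_{x_i},e_{\overline{x_i}}$ for each variable and, for each clause $(a\lor b)$, a four-edge ``path'' $(e_a,f^a_*,f^b_*,e_b)$ in which consecutive edges meet in two vertices; one then shows $\ex(G_\varphi,E_2^{(3)}) = n + \ell_\varphi$. Your reduction is considerably shorter and more transparent, and its black box (Holyer) is no more exotic than the one the paper invokes (Berman--Karpinski). The paper's gadget construction, on the other hand, is more self-contained in the sense that it tracks exactly which pairs of hyperedges intersect in two vertices, and the MAX-2-SAT source problem comes with explicit inapproximability constants should one want them later; but for the bare NP-hardness statement your argument is the cleaner one.
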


\noindent In our proof we use a reduction from a variation of the MAX-2-SAT problem. We first recall the definition of the original problem.
\begin{definition}\label{def:max-2-sat}
	A {\em 2-CNF formula} is a formula that consists of a conjunction of clauses, where each clause is a disjunction of $2$ literals\footnote{Clauses of the form $(x \lor x) \equiv (x)$ are not allowed, and we may assume that we do not have clauses of the form $(x \lor \overline{x})$.} (a variable or its negation).
	The {\em MAX-2-SAT problem} is to determine, given a 2-CNF formula $\varphi$, the maximum number of clauses that can be satisfied in $\varphi$ by a truth assignment to its variables.
\end{definition}

\noindent It is well-known that the MAX-2-SAT problem is NP-hard (see \cite{GJ}).
We consider the following variation of the MAX-2-SAT problem, which we call {\em 3-OCC-MAX-2-SAT}.
\begin{definition}\label{def:3-occ-max-2-sat}
	We call a 2-CNF formula in which every variable occurs at most $3$ times a {\em 3-OCC-2-SAT formula}.
	The {\em 3-OCC-MAX-2-SAT problem} is to determine, given a 3-OCC-2-SAT formula $\varphi$, the maximum number of clauses that can be satisfied in $\varphi$ by a truth assignment to its variables.
\end{definition}
\noindent It was shown by Berman and Karpinski \cite{Ber-Karp} that the 3-OCC-MAX-2-SAT problem is also NP-hard\footnote{In fact, Berman and Karpinski showed that for every $\varepsilon > 0$, it is NP-hard to approximate 3-OCC-MAX-2-SAT within a factor of $2012/2011 - \varepsilon$ (see Section 7 in \cite{Ber-Karp}).}.
\begin{theorem}[Berman and Karpinski \cite{Ber-Karp}]\label{thm:3-OCC-MAX-2-SAT-NP-hard}
	The 3-OCC-MAX-2-SAT problem is NP-hard.
\end{theorem}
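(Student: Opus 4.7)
The plan is to reduce from unrestricted MAX-2-SAT, whose NP-hardness is classical (Garey and Johnson \cite{GJ}). The key idea is a \emph{variable-splitting} gadget: for each variable $x_i$ occurring $k_i \geq 4$ times in the input formula $\varphi$, introduce $k_i$ fresh copies $x_i^{(1)},\dots,x_i^{(k_i)}$, replace the $j$-th occurrence of $x_i$ by $x_i^{(j)}$, and add ``consistency'' clauses designed to force all $k_i$ copies to take the same value in any near-optimal assignment of the resulting formula $\varphi'$. Variables with $k_i \leq 3$ are left untouched. If we can arrange both that each copy appears at most three times (once through the substitution, plus at most twice through the consistency clauses) and that every optimal assignment of $\varphi'$ is consistent, then $\text{OPT}(\varphi') = \text{OPT}(\varphi) + K$, where $K$ is the total number of consistency clauses, which is precisely what we need.

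The naive realization places the copies on a directed cycle with implication clauses $(\bar{x}_i^{(j)} \vee x_i^{(j+1)})$ (indices mod $k_i$); this keeps each copy at exactly three occurrences. However, under unit weights this simple cycle does \emph{not} obviously force consistency at optimality: a single $T\to F$ transition in the cyclic sequence of copy values violates only one implication, and this loss can be outweighed by the gain of one (or even two) original clauses at the boundary. Thus the main obstacle is to design a consistency gadget robust enough that no inconsistent assignment can outperform the best consistent one.

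I would handle this obstacle along the lines of Berman and Karpinski \cite{Ber-Karp} by replacing the cycle with a sparse \emph{expander graph} on the copies of $x_i$, so that any non-trivial bipartition of those copies violates $\Omega(k_i)$ consistency clauses—asymptotically more than the $\leq k_i$ original clauses one could hope to save by being inconsistent. Using explicit constant-degree expanders keeps each copy in $O(1)$ consistency constraints, and with a little extra bookkeeping (possibly splitting each expander edge through a fresh auxiliary variable that itself appears only twice) one can arrange that every variable of $\varphi'$ has exactly three occurrences. With this robust gadget the rounding becomes transparent: ``majority-rounding'' each variable's copies to their common value strictly increases the objective whenever the current assignment is inconsistent, so every optimum is consistent and projects back to an optimal assignment of $\varphi$. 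This yields NP-hardness of 3-OCC-MAX-2-SAT; in fact, the same expander-based construction delivers the quantitative inapproximability bound referenced in the footnote of the theorem.
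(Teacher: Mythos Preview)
The paper does not supply its own proof of this theorem: it is quoted as a result of Berman and Karpinski \cite{Ber-Karp} and then used as a black box in the proof of Lemma~\ref{lem:(3,2)-hardness}. So there is no in-paper argument to compare against; your proposal is effectively a sketch of the cited result itself.

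Your outline is faithful to the Berman--Karpinski method: variable splitting, expander-based consistency gadgets in place of the (correctly rejected) cycle, and majority rounding. The one soft spot is the occurrence bookkeeping. With a degree-$d$ expander each copy already picks up at least $d$ occurrences from the consistency clauses (one per incident edge even with one-directional implications), plus one from the substituted original literal, giving $d+1 \geq 4$ for any genuine expander. Routing an expander edge through an auxiliary variable, as you suggest, does not lighten the load on its endpoints: in the split pair $(\bar x_u \vee y)\wedge(\bar y \vee x_v)$ each of $x_u,x_v$ still appears once. The standard remedy is to inflate the number of copies by a large constant factor and let only $k_i$ designated ``terminal'' copies carry an original occurrence; these terminals are placed at degree-$2$ vertices of the consistency graph while all remaining (auxiliary) copies sit at degree $3$, and the resulting near-regular graph still expands well enough for the majority-rounding argument to force consistency at optimum. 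With that adjustment your sketch goes through.
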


\noindent
We are now ready to prove Lemma \ref{lem:(3,2)-hardness}.

\begin{proof}[Proof of Lemma \ref{lem:(3,2)-hardness}]
	We show that computing $\ex(G,E_2^{(3)})$ for input $3$-uniform hypergraphs on $n$ vertices is at least as hard as the 3-OCC-MAX-2-SAT problem for formulas on $n$-variables.
	The latter problem is NP-hard by Theorem \ref{thm:3-OCC-MAX-2-SAT-NP-hard}.
	Let $\varphi$ be a 3-OCC-2-SAT formula on variables $x_1, \dots, x_n$ that contains $m$ clauses, denoted by $C_1, \dots, C_m$.
	We first observe that we may assume, without loss of generality, that each literal in $\varphi$ appears at most twice.
	Indeed, if some variable $x_i$ appears in $\varphi$ three times such that all of its appearances are either as the literal $x_i$ or as $\overline{x_i}$, then by assigning to $x_i$ the values $1$ or $0$, respectively, we satisfy all three clauses in which $x_i$ appears, and thus can remove them from the formula.
	Moreover, by a similar reasoning, we may assume that for each variable $x_i$, both of its literals appear in $\varphi$.
	Now, given formula $\varphi$ as above, we construct a $3$-uniform hypergraph that we denote by $G_{\varphi}$, as follows.
	For each variable $x_i, \, 1 \leq i \leq n$, we put two edges that intersect in $2$ vertices. The edges are put on newly added vertices.
	We denote these edges by $e_{x_i}, e_{\overline{x_i}}$, and they represent the literals $x_i, \overline{x_i}$, respectively.
	Now, for each clause $C = (a \lor b)$ in $\varphi$ (where $a,b$ are literals), we add a configuration of two edges, as follows.
	Let us denote $e_a = \{v_1, v_2, v_3\}, e_b = \{u_1, u_2, u_3\}$ such that
	$e_a \cap e_{\overline{a}} = \{v_1, v_2\}, \, e_b \cap e_{\overline{b}} = \{u_1, u_2\}$.
	Now, if it is the first occurrence of $a$ so far in $\varphi$, add the edge $f_1^a = \{v_1, v_3, u_3\}$, and if it is the second occurrence, add the edge $f_2^a = \{v_2, v_3, u_3\}$.
	Similarly, if it is the first occurrence of $b$, add the edge $f_1^b = \{u_1, u_3, v_3\}$, and if it is the second occurrence, add the edge $f_2^b = \{u_2, u_3, v_3\}$.
	Observe that $(e_a,f_*^a,f_*^b,e_b)$ is a sequence of four edges such that every two consecutive edges intersect in $2$ vertices (see Figure \ref{fig:clause-config} for an illustration).
	The resulting graph is $G_{\varphi}$. Note that $|E(G_{\varphi})| = 2n + 2m = \poly(n)$.
	Also, it is straightforward to verify that the only pairs of edges in $G_{\varphi}$ that intersect in $2$ vertices are $e_{x_i}, e_{\overline{x_i}}, \, 1 \leq i \leq n$, and, for every clause $C = (a \lor b)$, each pair of consecutive edges in the four-edge sequence $(e_a,f_*^a,f_*^b,e_b)$ associated with $C$.
	
	\begin{figure}[h] 
		\centering
		
		\begin{tikzpicture}[scale = 2.25]
		\coordinate (u3) at (0,0.2);
		\coordinate (u1) at (0.5,0.5);
		\coordinate (u2) at (1,0.5);
		\coordinate (u4) at (1.5,0.2);
		
		\foreach \i in {1,2,3,4}
		{
			\draw (u\i) node[fill=black,circle,minimum size=3pt,inner sep=0pt,label=below:$u_\i$] {};
		}
		
		\draw [rounded corners = 18,label=$f$] (-0.44,-0.15) -- (0.5,0.75) -- (1.5,0.4) -- cycle;
		\coordinate (e-pos-xj) at (0.47,0.96);
		\draw (e-pos-xj) node[fill=black,circle,minimum size=0pt,inner sep=0pt,
		label={[font=\small,text=blue]below:$\mathbf{e_{x_j}}$}] {};
		
		\draw [rounded corners = 18] (1.94,-0.15) -- (1,0.75) -- (0,0.4) -- cycle;
		\coordinate (e-neg-xj) at (1.03,0.96);
		\draw (e-neg-xj) node[fill=black,circle,minimum size=0pt,inner sep=0pt,
		label={[font=\small,text=blue]below:$\mathbf{e_{\overline{x_j}}}$}] {};
		
		\coordinate (v3) at (0,-1.2);
		\coordinate (v1) at (0.5,-1.5);
		\coordinate (v2) at (1,-1.5);
		\coordinate (v4) at (1.5,-1.2);
		
		\foreach \i in {1,2,3,4}
		{
			\draw (v\i) node[fill=black,circle,minimum size=3pt,inner sep=0pt,label=above:$v_\i$] {};
		}
		
		\draw [rounded corners = 18,label=$f$] (-0.44,-0.85) -- (0.5,-1.75) -- (1.5,-1.4) -- cycle;
		\coordinate (e-pos-xi) at (0.47,-1.96);
		\draw (e-pos-xi) node[fill=black,circle,minimum size=0pt,inner sep=0pt,
		label={[font=\small,text=blue]above:$\mathbf{e_{x_i}}$}] {};
		
		\draw [rounded corners = 18] (1.94,-0.85) -- (1,-1.75) -- (0,-1.4) -- cycle;
		\coordinate (e-neg-xi) at (1.03,-1.96);
		\draw (e-neg-xi) node[fill=black,circle,minimum size=0pt,inner sep=0pt,
		label={[font=\small,text=blue]above:$\mathbf{e_{\overline{x_i}}}$}] {};
		
		\draw [rounded corners = 18] (0.7,-1.67) -- (-0.55,-0.95) -- (0,0.45) -- cycle;
		\draw [rounded corners = 18] (0.7,0.65) -- (-0.55,0) -- (0,-1.45) -- cycle;
		
		\end{tikzpicture}
		
		\caption{A configuration of clause $(x_i \lor x_j)$.}\label{fig:clause-config}
	\end{figure}
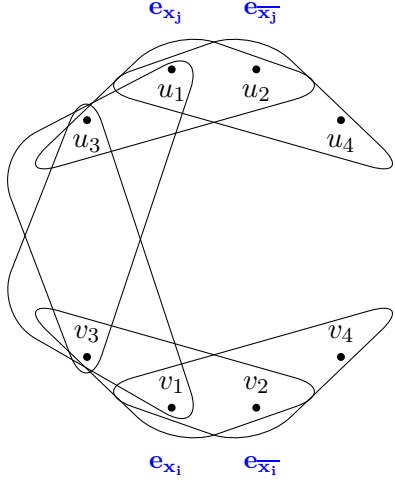
	
	\noindent For a 2-CNF formula $\varphi$, denote by $\ell_{\varphi}$ the maximum number of clauses that can be satisfied in $\varphi$ by some truth assignment to its variables. Our main claim is as follows:	
	\begin{claim}\label{clm:3-OCC-MAX-2SAT-ex-reduction}
		Given a 3-OCC-2-SAT formula $\varphi$ on variables $x_1, \dots, x_n$ and with $m$ clauses,
		it holds that $\ex(G_{\varphi},E_2^{(3)}) = n + \ell_{\varphi}$.
	\end{claim}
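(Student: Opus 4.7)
The plan is to view any $E_2^{(3)}$-free subgraph $H$ of $G_\varphi$ as an independent set in the ``conflict graph'' on the edge-set of $G_\varphi$ in which two hyperedges are joined when they share exactly two vertices. By the observation recorded just before the claim, this conflict graph consists of the $n$ variable-pairs $\{e_{x_i}, e_{\overline{x_i}}\}$ together with, for each clause $C = (a \vee b)$, a four-vertex path $e_a - f^a_C - f^b_C - e_b$, the pieces being glued only through shared variable-edges. Computing $\ex(G_\varphi, E_2^{(3)})$ therefore reduces to maximum independent set in this well-structured graph.

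For the lower bound $\ex(G_\varphi, E_2^{(3)}) \geq n + \ell_\varphi$, take an optimal truth assignment $\tau^*$ satisfying $\ell_\varphi$ clauses and use a \emph{flip}: include $e_\ell$ in $H$ for every literal $\ell$ that is \emph{false} under $\tau^*$ (so exactly one variable-edge per variable). This arranges that $e_a \notin H$ for every literal $a$ that is true under $\tau^*$. For each of the $\ell_\varphi$ satisfied clauses $C = (a \vee b)$, pick a true literal $a \in C$ and add the edge $f^a_C$ to $H$. The only potential conflicts are with $e_a$ (absent from $H$) and with $f^b_C$ (at most one $f$-edge per clause is added), and by the paper's observation $f$-edges from distinct clauses do not conflict. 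This yields $|E(H)| = n + \ell_\varphi$.

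For the upper bound, I would first reduce to the case where every variable contributes a variable-edge to $H$. Suppose some $x_i$ has both $e_{x_i}, e_{\overline{x_i}} \notin H$, and let $k_1 = |\{C : f^{x_i}_C \in H\}|$ and $k_2 = |\{C' : f^{\overline{x_i}}_{C'} \in H\}|$. Since literal $x_i$ appears in at most $2$ clauses and literal $\overline{x_i}$ in at most $2$ clauses with total at most $3$, we have $k_1 + k_2 \leq 3$ and hence $\min(k_1, k_2) \leq 1$. Assuming without loss of generality $k_1 \leq 1$, add $e_{x_i}$ to $H$ and delete the (at most one) conflicting $f$-edge of the form $f^{x_i}_C$. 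This preserves $E_2^{(3)}$-freeness and does not decrease $|E(H)|$. Iterating yields $H'$ with $|E(H')| \geq |E(H)|$ in which every variable contributes exactly one variable-edge, thereby defining a truth assignment $\sigma$.

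Finally, set $\tau := \bar\sigma$. For each clause $C = (a \vee b)$ the path structure forces $H'$ to contain at most one of $\{f^a_C, f^b_C\}$, and such an $f$-edge can be present only if at least one of $e_a, e_b$ is absent from $H'$, equivalently only if at least one of $a, b$ is \emph{true} under $\tau$. Hence the number of $f$-edges in $H'$ is at most the number of clauses satisfied by $\tau$, which is at most $\ell_\varphi$; combining gives $|E(H)| \leq |E(H')| \leq n + \ell_\varphi$. The main delicate point is the reduction step: it uses both the $3$-occurrence restriction (to ensure $\min(k_1, k_2) \leq 1$) and the absence of cross-clause $f$-edge conflicts (so that local surgery does not create issues elsewhere in $H$).
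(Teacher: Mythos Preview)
Your proof is correct and is essentially the same argument as the paper's, phrased in the complementary language: the paper works with a minimum deletion set $E$ (so that $G_\varphi - E$ is $E_2^{(3)}$-free) and shows $|E| = n + 2m - \ell_\varphi$, while you work directly with a maximum $E_2^{(3)}$-free subgraph $H$ viewed as an independent set in the conflict graph. Your ``flip'' convention (include $e_\ell$ for \emph{false} literals) matches the paper's convention (put $e_{x_i}$ into the deletion set when $x_i$ is \emph{true}), and your normalization step---when neither variable-edge is in $H$, use $k_1 + k_2 \le 3$ to swap one in at the cost of at most one $f$-edge---is exactly the paper's normalization (when both variable-edges are in $E$, swap one out and add the single $f_1^{x_i}$, using that one literal occurs only once).
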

	\begin{proof}
		Let $\exbar(G_{\varphi},E_2^{(3)}) = e(G_{\varphi}) - \ex(G_{\varphi},E_2^{(3)})$.
		Recalling that $|E(G_{\varphi})|=2n+2m$, it is enough to show that
		$\exbar(G_{\varphi},E_2^{(3)}) = n+2m-\ell_{\varphi}$.
		We first observe that $\exbar(G_{\varphi},E_2^{(3)}) \geq n+m$. Indeed, in order to turn $G_{\varphi}$ into an $E_2^{(3)}$-free graph, we need to remove at least one of the edges $e_{x_i}, e_{\overline{x_i}}$ for every $1 \leq i \leq n$, and, for every clause $C = (a \lor b)$, we need to remove at least one of the
		middle edges of the four-edge sequence $(e_a,f_*^a,f_*^b,e_b)$ associated with $C$.
		Let $f : \{x_1, \dots, x_n\} \rightarrow \{0,1\}$ be a truth assignment which satisfies $\ell_{\varphi}$ clauses in $\varphi$.
		We now define a set of edges $E \subseteq E(G_{\varphi})$, as follows. For every $i \in [n]$, if $f(x_i) = 1$ then add the edge $e_{x_i}$ to $E$, and otherwise add the edge $e_{\overline{x_i}}$ to $E$.
		Now, for every clause $C = (a \lor b)$ in $\varphi$, if $e_a, e_b \notin E$, then add to $E$ both edges $f_*^a,f_*^b$ that are associated with $C$; otherwise, if $e_a \in E$ then add edge $f_*^b$ to $E$, and otherwise add edge $f_*^a$ to $E$.
		Now, as for every clause $C = (a \lor b)$ that is satisfied by $f$ we add to $E$ only one of the edges $f_*^a,f_*^b$ (since $e_a \in E$ or $e_b \in E$, by our construction of $E$), we have that
		$|E| = n + \ell_{\varphi} + 2(m-\ell_{\varphi}) = n+2m-\ell_{\varphi}$.
		We now claim that $G_{\varphi} - E$ is $E_2^{(3)}$-free.
		Indeed, for every $i \in [n]$, we have removed one of the edges $e_{x_i}, e_{\overline{x_i}}$, and it is easy to see that for every clause $C = (a \lor b)$, by removing $E$ we leave no two consecutive edges
		in the sequence $(e_a,f_*^a,f_*^b,e_b)$ associated with $C$. As we previously argued,
		no other two edges in $G_{\varphi}$ intersect in $2$ vertices, and so $G_{\varphi} - E$ is $E_2^{(3)}$-free. We conclude that $\exbar(G_{\varphi},E_2^{(3)}) \leq |E| = n+2m-\ell_{\varphi}$.
		
		In the other direction, assume by contradiction that there exists a set of edges $E \subseteq E(G_{\varphi})$ with $|E| < n+2m-\ell_{\varphi}$ such that $G_{\varphi} - E$ is $E_2^{(3)}$-free. If both $e_{x_i}, e_{\overline{x_i}} \in E$ for some $i \in [n]$, then assuming, without loss of generality, that the literal $x_i$ occurs only once in $\varphi$, remove $e_{x_i}$ form $E$ and add instead the edge $f_1^{x_i}$. Clearly, we did not increase the size of $E$, and it is easy to see\footnote{This is true since $e_{x_i}$ intersects in $2$ vertices only with the edges $e_{\overline{x_i}}, f_1^{x_i}$, and both are in $E$.} that $G_{\varphi} - E$ is $E_2^{(3)}$-free also with the updated set $E$.
		Now, define a truth assignment $f$ as follows: For every $i \in [n]$, set $f(x_i) = 1$ if
		$e_{x_i} \in E$, and set $f(x_i) = 0$ otherwise.
		As for every $i \in [n]$, exactly one of the edges $e_{x_i}, e_{\overline{x_i}}$ is in $E$, and as for every clause $C = (a \lor b)$, the set $E$ must contain at least one of the edges $f_*^a,f_*^b$ associated with $C$, then since $|E| < n+2m-\ell_{\varphi}$, it follows that the number of clauses $C = (a \lor b)$ for which $E$ contains exactly one of the edges $f_*^a,f_*^b$ is at least $\ell_{\varphi}+1$. For each such clause $C = (a \lor b)$ we have that $e_a \in E$ or $e_b \in E$ (since $G_{\varphi} - E$ is $E_2^{(3)}$-free), and thus, by the definition of $f$ (and using the fact that for every $i \in [n]$, exactly one of the edges $e_{x_i}, e_{\overline{x_i}}$ is in $E$), we have that $f$ satisfies $C$, and so $f$ satisfies at least $\ell_{\varphi}+1$ clauses in $\varphi$, a contradiction.
		We conclude that $\exbar(G_{\varphi},E_2^{(3)}) \geq n+2m-\ell_{\varphi}$, completing the proof of the claim.
	\end{proof}
	
	\noindent The above claim establishes the desired reduction and completes the proof of the lemma.
\end{proof}

\begin{remark}\label{rem:(3,1)-also-hard}
	On can easily verify that essentially the same reduction we have established in Lemma \ref{lem:(3,2)-hardness}, changing only slightly the construction of the graph $G_{\varphi}$, can be used to prove also that computing $\ex(G,E_1^{(3)})$ is NP-hard for $3$-uniform hypergraphs.
	Indeed, the only difference in the construction of $G_{\varphi}$, compared to the one in the proof of Lemma \ref{lem:(3,2)-hardness}, is that now, for every every clause $C = (a \lor b)$, we shall add $3$ new vertices that we denote by $z_1^{a,b}, z_2^{a,b}, z_3^{a,b}$, and then add a configuration of two edges $f^a, f^b$, where $f^a$ will contain $z_1^{a,b}, z_2^{a,b}$ and a vertex from $e_a \setminus e_{\overline{a}}$, while $f^b$ will contain $z_2^{a,b}, z_3^{a,b}$ and a vertex from $e_b \setminus e_{\overline{b}}$, forming a loose path $(e_a,f^a,f^b,e_b)$ associated with $C$.
	The rest of the proof remains valid also for this construction of $G_{\varphi}$.
	For the sake of clarity and brevity, we stated and proved in Lemma \ref{lem:(3,2)-hardness} only the case
	$k=3, \, t=2$.
\end{remark}

We are now ready to prove Theorem \ref{thm:two-intersecting-edges-hardness}.
\begin{proof}[Proof of Theorem \ref{thm:two-intersecting-edges-hardness}]
	Let $k \geq 3, \, k > t \geq 1$ be integers.
	We first deal with the case $t=1$. By Remark \ref{rem:(3,1)-also-hard}, we know that computing
	$\ex(G,E_1^{(3)})$ is NP-hard for $3$-uniform hypergraphs, and so we may assume that $k > 3$.
	Now, starting with an $n$-vertex $3$-graph $G$ and $E_1^{(3)}$, and applying (the reduction in) Proposition \ref{prop:ex(G,k,t)<=ex(G',k+1,t)} $k-3$ times, we get that computing
	$\ex(G',E_1^{(k)})$ is NP-hard for input $k$-uniform hypergraphs, as required.
	
	We now assume that $t \geq 2$.
	By Lemma \ref{lem:(3,2)-hardness}, we know that computing
	$\ex(G,E_2^{(3)})$ is NP-hard for $3$-uniform hypergraphs, and so we may assume $(k,t) \neq (3,2)$.
	Now, starting with an $n$-vertex $3$-graph $G$ and $E_2^{(3)}$, by first applying (the reduction in) Proposition \ref{prop:ex(G,k,t)<=ex(G',k+1,t)} $k-t-1$ times, and then applying (the reduction in) Proposition \ref{prop:ex(G,k,t)<=ex(G',k+1,t+1)} $t-2$ times, we get that computing
	$\ex(G',E_t^{(k)})$ is NP-hard for input $k$-uniform hypergraphs, as required.
	This completes the proof of the theorem.
\end{proof}

\begin{remark}\label{rem:r-edges-intersect-in-t-vert}
	We note that the hardness result in Theorem \ref{thm:two-intersecting-edges-hardness} can
	be easily generalized to $r$ intersecting edges for every $r \geq 2$. Namely, given integers
	$k \geq 3, \, k > t \geq 1, \, r \geq 2$, we denote by $E_t^{(k)}(r)$ a set of $r$ edges of size $k$ that intersect in exactly $t$ (same) vertices. One can show that for every $r \geq 2$, computing $\ex(G,E_t^{(k)}(r))$ is NP-hard for  $k$-uniform hypergraphs. The proof is by induction on $r$, where the base case $r=2$ was established in Theorem \ref{thm:two-intersecting-edges-hardness}. We now assume that computing $\ex(G,E_t^{(k)}(r))$ is NP-hard for some $r \geq 2$, and show that computing $\ex(G,E_t^{(k)}(r+1))$ is at least as hard as computing $\ex(G,E_t^{(k)}(r))$ for input $k$-uniform hypergraphs. Given an $n$-vertex graph $G$, we construct a graph $G'$, as follows. For every $t$-set of vertices $T \in \binom{V(G)}{t}$, add to $G$ new $k-t$ vertices that we denote by $z_1^T, \dots, z_{k-t}^T$, and then add edge $e_T := T \cup \{z_1^T, \dots, z_{k-t}^T\}$. The resulting graph is $G'$. Note that $v(G') = n + \binom{n}{t} \cdot (k-t) = \textup{poly}(n), \,
	e(G') = e(G) + \binom{n}{t} = \textup{poly}(n)$.
	Now, observing that no two of the newly added edges intersect in $t$ vertices, and also that
	if $F$ is an $E_t^{(k)}(r+1)$-free subgraph of $G'$ then at least
	$e(F[V(G)]) - \ex(G,E_t^{(k)}(r))$ of the newly added edges are missing from $F$,
	it is straightforward to verify that $\ex(G',E_t^{(k)}(r+1)) = \ex(G,E_t^{(k)}(r))  + \binom{n}{t}$.
\end{remark}

\end{document}